\documentclass[a4paper,11pt]{article}
\usepackage{tipa}
\usepackage{bbm}
\usepackage{bbding}
\usepackage{stmaryrd}
\usepackage{amsfonts}
\usepackage{graphicx}
\usepackage{epsfig}
\usepackage{amssymb}
\usepackage{amssymb,amsmath,amsthm}
\usepackage{pifont}
\usepackage{graphicx}
\usepackage{cite}
\usepackage{xcolor}
\usepackage[breaklinks,colorlinks,linkcolor=red,urlcolor=blue,citecolor=cyan]{hyperref}

\DeclareMathOperator{\dive}{div}


\newtheorem{theorem}{Theorem}[section]
\newtheorem{lemma}[theorem]{Lemma}

\theoremstyle{definition}
\newtheorem{definition}{Definition}[section]

\newtheorem{remark}{Remark}
\newtheorem{Proposition}{Proposition}[section]

\allowdisplaybreaks[4]

\numberwithin{equation}{section}

\hyphenation{con-firm} \textheight24cm
\setlength{\textwidth}{15.60cm} \addtolength{\topmargin}{-1.9cm}
\addtolength{\oddsidemargin}{-1.2cm}
\addtolength{\evensidemargin}{0cm}
\addtolength{\evensidemargin}{0cm} \setcounter{enumi}{1}

\DeclareSymbolFontAlphabet{\testi}{letters} \large\normalsize

\baselineskip 12 pt


\begin{document}

\title{Global well-posedness for the 2D incompressible heat conducting Navier-Stokes equations with temperature-dependent coefficients and vacuum}

\author{Wenchao Dong$^2$, \ \ \ Qingyan Li$^{1,}$\thanks{Corresponding author. \newline \indent  \ \ \  Email addresses: wcdong@stumail.nwu.edu.cn (W.Dong), qyli22@126.com (Q. Li).} \vspace{0.13cm} \\
\textit{\small 1. School of Sciences, Chang'an University, Xi'an 710064, China;} \\
\textit{\small 2. Center for Nonlinear Studies, School of Mathematics, Northwest University, Xi'an 710127, China}}

\date{}
\maketitle{

\vspace{-0.85cm}

{\small
\noindent{\bf Abstract:} We consider the initial boundary problem of 2D non-homogeneous incompressible heat conducting Navier-Stokes equations with vacuum, where the viscosity and heat conductivity depend on temperature in a power law of Chapman-Enskog. We derive the global existence of strong solution to the initial-boundary value problem, which is not trivial, especially for the nonisentropic system with vacuum. Significantly, our  existence result holds for the cases that the viscosity and heat conductivity depend on $\theta$  with possibly different power laws (i.e., $\mu=\theta^{\alpha}, \kappa=\theta^{\beta}$ with constants $ \alpha,\beta\geq0$)  with   smallness assumptions only  on $\alpha$ and the measure of initial vacuum domain. In particular, the initial data can be arbitrarily large.  Moreover,  it is obtained that both velocity and temperature decay exponentially as time tends to infinity.

\vspace{0.1cm}

\noindent{\bf Keywords}: Navier-Stokes equations; Global strong solution; Large-time behavior; Temperature-dependent transport coefficients
}
}

\vspace{0.1cm}

\noindent{\bf MSC2020:} {35B40; 35B65; 35Q35; 76D03}

\date{}

\section{ Introduction}

The non-homogeneous incompressible heat conducting Navier-Stokes equations (see\cite{Lions} p.117, \cite{Lukaszewicz} p.23) can be stated as follows:
\begin{align}\label{1}
\left\{\begin{array}{l}
 \rho_t+\dive(\rho u)=0,\\
 (\rho u)_t+\dive(\rho u\otimes u)+\nabla P=\dive(2\mu D(u)),\\
 c_v\big((\rho \theta)_t+\dive(\rho u\theta)\big)-2\mu|D(u)|^2=\dive(\kappa\nabla\theta),\\
 \dive u=0.
\end{array}\right.
\end{align}
The system is supplemented with the initial condition
\begin{align}\label{2}
  (\rho, u, \theta)(x,0)=(\rho_0, u_0, \theta_0)(x), ~~ x\in \Omega,
\end{align}
and the boundary condition
\begin{align}\label{3}
  u=0, ~~ \frac{\partial \theta}{\partial\mathbf{n}}=0, ~~ \text{on} ~\partial\Omega,
\end{align}
where $\Omega\subset \mathbb{R}^2$ is a bounded smooth domain, $\mathbf{n}$ is the unit outward normal to $\partial\Omega$. Here $\rho, u, \theta$ and $P$ stand for the unknown density, velocity, absolute temperature and pressure of the fluid respectively. $D(u)=\frac{1}{2}\big(\nabla u+(\nabla u)^T\big)$ is the deformation tensor. The coefficients $\mu$, $c_v$ and $\kappa$ denote the viscosity, specific heat at constant volume and heat conductivity respectively.

In the theory of gas dynamics, the time evolution of the particle distribution function for the charged particles in a dilute gas can be modeled by the Vlasov-Poisson-Boltzmann system. It is well known that, if one derives the Navier-Stokes equations from the Boltzmann equation by applying the Chapman-Enskog expansion, the viscosity and heat conductivity coefficients are shown to be functions of absolute temperature in a power law. For details see \cite{Chapman,GuoYan}. If the intermolecular potential varies as $r^{-a}$, where $r$ means intermolecular distance, then $\mu$ and $\kappa$ are both proportional to a certain power of the temperature:
\begin{align*}
  \mu, \kappa\varpropto\theta^{\frac{a+4}{2a}}.
\end{align*}
For Maxwellian molecules ($a = 4$) the dependence is linear, while for elastic spheres ($a=\infty$)
the dependence is like $\sqrt{\theta}$. According to \cite{Chapman}, the following relations hold:
\begin{align*}
  \mu=c_1\theta^b, ~~~~
  \kappa=c_2\theta^b, ~~~~ b\in[\frac{1}{2}, \infty),
\end{align*}
where $c_1$ and $c_2$ are two positive constants. Based on this, in this paper, we mainly concentrate on the case that $c_v=1$ and $\mu, \kappa$ satisfy the following physical restrictions
\begin{equation}\label{4}
\mu=\mu(\theta)=\theta^{\alpha}, \ \ \kappa=\kappa(\theta)=\theta^{\beta},  \ \ \forall~  \alpha,\beta\geq0.
\end{equation}

Without heat-conducting, there are lots of literatures on the existence of solution to the $N$-dimensional ($N=2,3$) nonhomogeneous incompressible Navier-Stokes equations. If $\mu=const.$, the existence and uniqueness of solution are studied by many authors such as Danchin, Kim, Simon et al. in \cite{Cho2003,Kim1987,Simon,Craig,Danchin,Kim,Li,Liang,Lv}. For the case that $\mu=\mu(\rho)\geq C^{-1}$ is non-degenerate, many important and interesting phenomena in geophysical flows were modeled, as Liu-Xin-Yang \cite{Liutping} introduced the modified Navier-Stokes equations with density-dependent viscosity coefficients and point out that the viscosity depends only on the density for isentropic cases.
Cho and Kim\cite{Cho2004} first established the local existence of unique strong solution for the initial boundary value problem, then the authors \cite{Huang2,Huang1,ZhangJianwen} studied the global well-posedness to the multi-dimensional nonhomogeneous Navier-Stokes equations under the initial velocity is suitably small in certain sense. Recently, He, Li and L\"{u}\cite{He} obtained both the global existence and exponential stability of strong solution in $\mathbb{R}^3$, provided that the initial velocity is suitably small in some homogeneous Sobolev space. Meanwhile, for other related topic, there are also very interesting investigations about the existence of strong solution with small critical norms, refer to \cite{Abidi2011,Abidi2012,Abidi2013,Huang,Paicu,Paicu1013}, and references therein.

If the temperature equation $\eqref{1}_3$ is under consideration, the situation is more complicated and closer to the real status. In 2006, Feireisl and M\'{a}lek\cite{Feireisl2006} established long-time and large-data existence of a weak solution to the unsteady flows \eqref{1}, provided $C^{-1}\leq\mu(\theta),\kappa(\theta)\leq C$. Then, the local existence and uniqueness of the strong solution containing vacuum to the Cauchy problem \eqref{1}\eqref{2} in $\mathbb{R}^3$, for the case of $0<\mu(\rho,\rho\theta),\kappa(\rho,\rho\theta)\in C^1(\mathbb{R}^2)$, have been proved by Cho and Kim \cite{Cho}. Base on this, some global results have been obtained. When $\mu,\kappa=const.$, the authors\cite{Wang,Zhang,Zhong2020,Zhong,ZhongJMFM} considered 3D global strong solution  for different problems with small initial data, and showed that $u,P$ decay exponential and $\nabla\theta$ is algebraic.
When $\kappa$ is a positive constant, $\mu$ is density-temperature dependent and have positive lower bounds, the model is more practical. And Xu-Yu\cite{Xu,Xu2018} concerned the 3D Cauchy problem and the initial boundary value problem with vacuum, they obtained that $\Vert(\nabla u, \nabla\theta)\Vert_{H^1}\leq Ct^{-1}$ under the initial kinetic energy $\Vert\sqrt{\rho_0}u_0\Vert_{L^2}\ll1$.
Meanwhile, for 2D Cauchy problem and the initial boundary value problem when $\mu,\kappa= const.$ or density-dependent, the global existence and uniqueness of strong solution are established in \cite{Zhong2018,Zhong2022} by Zhong. However, these results do not include the decay estimate of the strong solution. Also, although the transport coefficients $\mu,\kappa$ may depend on $\theta$, they impose the condition that either or both $\mu$ and $\kappa$ are non-degenerate which have positive lower bounds. Nevertheless, in many applied sciences, Navier-Stokes equations with temperature-dependent transport coefficients are more practical, such as gases at very high temperature.
Recently, Guo and Li\cite{Guo} studied the 3D problem \eqref{1}-\eqref{4} under the small initial data  in the absence of vacuum, and got the large-time behavior of $(u,\nabla\theta)$.

The main aim of this paper is to study the global well-posedness of strong solution allowing vacuum to \eqref{1}-\eqref{4} in  two dimensional space without any smallness conditions about initial data. At the same time, we also prove that $\Vert( u,\theta-\frac{1}{\overline{\rho_0}|\Omega|}E_0)\big\Vert$ has a decay rate, while the density $\rho$ never decays (see Theorem \ref{T1}).

Without loss of generality, we denote
\begin{align*}
  \int\cdot dx\triangleq\int_{\Omega}\cdot dx,
\end{align*}
and use the following simplified notations for the standard Sobolev spaces:
\begin{align*}
  W^{k,p}=W^{k,p}(\Omega), ~~~~  H^k=H^k(\Omega)=W^{k,2}(\Omega), ~~~~ L^p=L^p(\Omega)=W^{0,p}(\Omega),\\
H_0^1=\{u\in H^1~ \big| ~ u=0\ \text{on}\ \partial\Omega\}, ~~~~~~~~ \,
H_{0,\sigma}^1=\{u\in H_0^1~ \big| ~ \dive u=0\  \text{in}\ \Omega\}, \\
H_{\mathbf{n}}^2=\{u\in H^2~ \big| ~ \nabla u\cdot\mathbf{n}=0\ \text{on}\ \partial\Omega\}, ~~~~
\Vert(f_1, f_2, \cdots, f_n)\Vert_{L^p}\triangleq\sum\limits_{i=1}^{n}\Vert f_i\Vert_{L^p},
\end{align*}
where $k\geq0, 1\leq p\leq\infty$.

Then, we give the definition of the strong solution to \eqref{1}-\eqref{4} throughout this paper.

\begin{definition}
(\emph{Strong Solution}) For $T>0$, $(\rho, u, \theta, P)$ is called a strong solution to \eqref{1}-\eqref{4} in $\Omega\times[0,T]$, if for some $q\in(2,\infty)$,
\begin{align}\label{45}
\left\{\begin{array}{l}
 \rho\in C([0,T];W^{1,q}),  ~~~ \rho_t\in C([0,T];L^q),\\
 u\in C([0,T];H_{0,\sigma}^1\cap H^2)\cap L^2(0,T;H^3),\\
 \theta\in C([0,T];H_{\mathbf{n}}^2)\cap L^2(0,T;H^3),\\
 P\in C([0,T];H^1)\cap L^2(0,T;H^2),\\
 (u_t,\theta_t)\in L^2(0,T;H^1), ~~~ (\sqrt{\rho}u_t,\sqrt{\rho}\theta_t)\in L^{\infty}(0,T;L^2),
\end{array}\right.
\end{align}
and $(\rho, u, \theta, P)$ satisfies \eqref{1} a.e. in $\Omega\times[0,T]$. In particular, the strong solution  $(\rho, u, \theta, P)$ is called the global strong solution, if the strong solution satisfies \eqref{45} for any $T>0$, and satisfies \eqref{1} a.e. in $\Omega\times[0,\infty)$.
\end{definition}

We are now in a position to state our main theorem as follows:

\begin{theorem}\label{T1}
For some positive constants $q\in(2,\infty)$ and $\underline{\theta}$, suppose that the initial data $(\rho_0, u_0, \theta_0)$ satisfies
\begin{equation}\label{49}
 0\leq\rho_0\in W^{1,q}, ~~~~
 u_0\in H_{0,\sigma}^1\cap H^2, ~~~~
 \underline{\theta}\leq\theta_0\in H_{\mathbf{n}}^2,
\end{equation}
and the compatibility conditions
\begin{align}\label{62}
  \left\{
    \begin{array}{rr}
     -\dive(2\mu(\theta_0)D(u_0))+\nabla P_0=\sqrt{\rho_0}g_1, \\
     -\dive(\kappa(\theta_0)\nabla\theta_0)-2\mu(\theta_0)|D(u_0)|^2=\sqrt{\rho_0}g_2,
    \end{array}
  \right.
\end{align}
for some $P_0\in H^1$ and $g_1,g_2\in L^2$. Then there exist two positive constants $\epsilon_0$ and $c_0$ depending only on $\tilde{\rho}$ and $\Vert(u_0,\theta_0)\Vert_{H^2}$,  such that if
\begin{align}
 \alpha\leq\epsilon_0
\end{align}
and
\begin{align}\label{79}
 |V|\leq \exp(-\frac{1}{c_0^2}),
\end{align}
where $|V|$ is the measure of the initial domain $V=\{x\in\Omega ~| ~\rho_0(x)\leq c_0\}$, the initial boundary problem \eqref{1}-\eqref{4} admits a unique global strong solution $(\rho,u,\theta,P)$ for any $T>0$, and the following large-time behavior holds:
\begin{equation}\label{47}
  \lim_{t\rightarrow\infty}
  \big\Vert( u, ~
   \theta-\frac{1}{\overline{\rho_0}|\Omega|}E_0)\big\Vert_{L^{\infty}}
  =0.
\end{equation}
Particularly, $(u,\theta,P)$ has the following decay rates
\begin{align}
  \Vert u\Vert_{H^2}^2+\Vert P\Vert_{H^1}^2+\Vert\sqrt{\rho} u_t\Vert_{L^2}^2
  \leq C_0e^{-\sigma_1 t},\label{54}\\
  \Vert\theta-\frac{1}{\overline{\rho_0}|\Omega|}E_0\Vert_{H^2}^2
  +\Vert\sqrt{\rho}\theta_t\Vert_{L^2}^2\leq C_0e^{-\sigma_2 t},\label{55}
\end{align}
where $C_0$  is a positive constant depending only on $c_0, \tilde{\rho}, \underline{\theta}, \Omega, \Vert(u_0,\theta_0)\Vert_{H^2}$ and $\Vert\rho_0\Vert_{W^{1,q}}$. Here
\begin{gather*}
  E_0=\int\rho_0(\theta_0+\frac{1}{2}|u_0|^2)dx, ~~~~~~~
  \overline{\rho_0}=\frac{1}{|\Omega|}\int\rho_0dx, ~~~~~~~
  \sigma_1\triangleq\frac{\pi^2\underline{\theta}^{\alpha}}{\tilde{\rho}d^2},\\
  \sigma_2\triangleq\frac{\pi^2}{\tilde{\rho}d^2}
     \min\big\{\frac{1}{2}\underline{\theta}^{\beta}(1+\frac{\tilde{\rho}}{\overline{\rho_0}})^{-2}, \underline{\theta}^{\alpha}\big\}, ~~~~
     d=\text{diam}(\Omega)\triangleq\sup\{|x-y| | x,y\in\Omega\}, ~~~~ \tilde{\rho}=\Vert\rho_0\Vert_{L^{\infty}}.
 \end{gather*}

\end{theorem}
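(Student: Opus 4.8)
We follow the standard three-step programme. First, we construct a unique local-in-time strong solution; second, we derive a priori estimates on $[0,T]$ that are uniform in $T$ — in fact exponentially decaying — and, together with a blow-up criterion and a continuity argument, use them to obtain the global solution; third, we read the large-time behaviour off these same bounds. For the local theory we run a linearization/fixed-point scheme of Cho--Kim type: the lower bound $\theta\ge\underline\theta>0$, preserved by the maximum principle applied to $\eqref{1}_3$ (whose source $2\mu|D(u)|^2$ is nonnegative), keeps $\mu(\theta)=\theta^\alpha$ and $\kappa(\theta)=\theta^\beta$ smooth, positive and non-degenerate along the iteration, and the compatibility conditions \eqref{62} supply the data needed so that $(\sqrt\rho u_t,\sqrt\rho\theta_t)\in L^\infty(0,T;L^2)$ up to $t=0$; the solution then persists as long as $\|\nabla u\|_{L^\infty_tL^2}$, $\|\nabla\theta\|_{L^\infty_tL^2}$ and $\|\rho\|_{L^\infty_tW^{1,q}}$ stay finite. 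Before the estimates we record the free facts: since $u$ is divergence free, $\rho$ is transported, so $\|\rho(t)\|_{L^p}=\|\rho_0\|_{L^p}$ (hence $\|\rho\|_{L^\infty}=\tilde\rho$), the mass $\overline{\rho_0}|\Omega|$ is conserved, and by Liouville's theorem $|\{x:\rho(\cdot,t)\le c_0\}|=|V|$ for all $t\ge0$; integrating $\eqref{1}_2$ and $\eqref{1}_3$ gives the energy conservation $\int\rho(\theta+\tfrac12|u|^2)\,dx\equiv E_0$ and the basic identity $\tfrac12\|\sqrt\rho u\|_{L^2}^2(t)+\int_0^t\!\!\int 2\mu|D(u)|^2\,dx\,ds=\tfrac12\|\sqrt{\rho_0}u_0\|_{L^2}^2$.

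\noindent\textbf{Core a priori estimates.}
The heart of the argument is to bound $\|\nabla u\|_{L^2}$, $\|\nabla\theta\|_{L^2}$ and then the $H^2$ and $W^{1,q}$ norms, uniformly on $[0,\infty)$. Testing $\eqref{1}_2$ by $u_t$ yields
\[
\frac{d}{dt}\int\mu|D(u)|^2\,dx+\int\rho|u_t|^2\,dx
=\alpha\int\theta^{\alpha-1}\theta_t|D(u)|^2\,dx-\int\rho\,u\!\cdot\!\nabla u\!\cdot\! u_t\,dx,
\]
and the first term on the right carries the factor $\alpha$; indeed the entire effect of the temperature dependence of the viscosity (also through $\nabla\mu=\alpha\theta^{\alpha-1}\nabla\theta$ in the higher-order estimates) enters multiplied by $\alpha$, hence will be absorbable once $\alpha\le\epsilon_0$. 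We handle the convective term with the two-dimensional Ladyzhenskaya inequality and Stokes estimates for $(u,P)$, obtaining a right-hand side of the schematic form $C\,\big(\|u\|_{L^2}^2\|\nabla u\|_{L^2}^2\big)\,\|\nabla u\|_{L^2}^2+(\text{lower order})$; since $\int_0^\infty\|\nabla u\|_{L^2}^2\,dt\le(2\underline\theta^\alpha)^{-1}\|\sqrt{\rho_0}u_0\|_{L^2}^2$ and $\|u\|_{L^2}$ is controlled as below, the coefficient $\|u\|_{L^2}^2\|\nabla u\|_{L^2}^2$ is integrable in time and a Gr\"onwall argument closes the estimate with \emph{no} smallness of the data — this is where two-dimensionality is essential. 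To control $\|u\|_{L^2}$ in the presence of vacuum we exploit the smallness of $|V|$: writing $\int|u|^2\le c_0^{-1}\int\rho|u|^2+\int_V|u|^2$ and estimating $\int_V|u|^2$ by a Trudinger--Moser/Brezis--Gallouet type bound (in 2D, $\|u\|_{L^p}\le C\sqrt p\,\|\nabla u\|_{L^2}$ for $u\in H^1_0$, optimized in $p$) gives $\int|u|^2\le c_0^{-1}\int\rho|u|^2+C\,|V|\log(1/|V|)\,\|\nabla u\|_{L^2}^2$, and the hypothesis $|V|\le\exp(-1/c_0^2)$ guarantees that this last contribution — even after the unavoidable powers of $1/c_0$ introduced by the other estimates — is small enough to be absorbed by the viscous dissipation. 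For the temperature we test $\eqref{1}_3$ by $\theta$ and by $\theta_t$ (equivalently by $\theta-c$ with $c=E_0/(\overline{\rho_0}|\Omega|)$): with $\theta\ge\underline\theta$ the dissipation $\int\kappa|\nabla\theta|^2$ is coercive, the heating term $\int 2\mu|D(u)|^2\theta$ is controlled once an $L^\infty$ bound on $\theta$ is in hand, and that bound comes from an $L^p$-energy (Moser-type) iteration on $\eqref{1}_3$ using the time-integrability of $\int 2\mu|D(u)|^2$. Elliptic regularity for the Stokes system and for the Neumann problem $-\dive(\kappa\nabla\theta)=2\mu|D(u)|^2-\rho u\!\cdot\!\nabla\theta-\rho\theta_t$, the transport estimate for $\nabla\rho$, and time-weighted bounds $\int_0^T t\,(\|\sqrt\rho u_{tt}\|_{L^2}^2+\|\sqrt\rho\theta_{tt}\|_{L^2}^2)\,dt$ then upgrade these to the class \eqref{45}; the continuity argument delivers the global strong solution, and uniqueness follows by estimating the difference of two solutions and applying Gr\"onwall.

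\noindent\textbf{Large-time behaviour.}
From the basic identity and $\int 2|D(u)|^2=\|\nabla u\|_{L^2}^2$ for divergence-free $u$,
\[
\frac{d}{dt}\|\sqrt\rho u\|_{L^2}^2=-2\!\int 2\mu|D(u)|^2\,dx\le-2\underline\theta^\alpha\|\nabla u\|_{L^2}^2\le-\frac{2\pi^2\underline\theta^\alpha}{\tilde\rho d^2}\|\sqrt\rho u\|_{L^2}^2,
\]
using the Poincar\'e inequality $\|u\|_{L^2}^2\le(d^2/\pi^2)\|\nabla u\|_{L^2}^2$ and $\|\sqrt\rho u\|_{L^2}^2\le\tilde\rho\|u\|_{L^2}^2$; hence $\|\sqrt\rho u\|_{L^2}^2$ decays like $e^{-\sigma_1 t}$. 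Feeding this into the $u_t$-test identity (whose right-hand side is then known to decay) and into the Stokes estimates propagates the decay — with the stated $\sigma_1$ after the customary loss in the intermediate steps — to $\|\nabla u\|_{L^2}^2$, $\|u\|_{H^2}^2$, $\|P\|_{H^1}^2$ and $\|\sqrt\rho u_t\|_{L^2}^2$, giving \eqref{54}. Integrating $\eqref{1}_3$ over $\Omega$ and using the basic identity gives $\int\rho(\theta-c)\,dx=-\tfrac12\|\sqrt\rho u\|_{L^2}^2$, which both identifies the equilibrium value $c=E_0/(\overline{\rho_0}|\Omega|)$ and shows $\int\rho(\theta-c)\to0$ exponentially; combining this with the coercive temperature dissipation and the Poincar\'e--Wirtinger inequality — the factor $(1+\tilde\rho/\overline{\rho_0})^{-2}$ arising in passing from $\|\sqrt\rho(\theta-c)\|_{L^2}$ to $\|\theta-c\|_{L^2}$, and the $\underline\theta^\alpha$ alternative in $\sigma_2$ reflecting that the temperature cannot relax faster than the kinetic energy driving it — yields the exponential decay of $\|\theta-c\|_{H^2}^2$ and $\|\sqrt\rho\theta_t\|_{L^2}^2$ in \eqref{55}; finally \eqref{47} follows from $H^2(\Omega)\hookrightarrow L^\infty(\Omega)$.

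\noindent\textbf{Main obstacle.}
The principal difficulty is exactly these time-uniform (and then exponentially decaying) higher-order estimates in the simultaneous presence of vacuum and temperature-dependent viscosity: the coupling of $\eqref{1}_2$ and $\eqref{1}_3$ through the heating term $2\mu|D(u)|^2$ and through $\partial_t\mu,\nabla\mu$ is strong, and without $|V|$ small one cannot control $\|u\|_{L^2}$ — hence cannot exploit the in-time integrability that replaces data smallness in two dimensions — while without $\alpha$ small the $\partial_t\mu$- and $\nabla\mu$-contributions to the $\nabla u$ and $\Delta u$ estimates are not absorbable. Extracting and propagating the $L^\infty$ bound on $\theta$ (needed both for $\mu,\kappa$ to be genuinely bounded and for the heating term) consistently through the iteration is the other delicate point; note that $\beta$ need not be small, because $\kappa=\theta^\beta$ is automatically non-degenerate and bounded once the two-sided bounds on $\theta$ hold, whereas the viscosity affects the momentum estimates only perturbatively, for $\alpha$ small.
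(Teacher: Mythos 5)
Your skeleton — local existence of Cho--Kim type, a continuation argument, conservation of $\int\rho(\theta+\tfrac12|u|^2)\,dx$ to identify the equilibrium value $E_0/(\overline{\rho_0}|\Omega|)$, and the Poincar\'e-based computation of $\sigma_1$ and of the factor $(1+\tilde\rho/\overline{\rho_0})^{-2}$ in $\sigma_2$ — matches the paper, and that part of the large-time analysis is essentially the paper's Lemmas \ref{L6}--\ref{L7}. The gaps are in the a priori estimates, which is where the theorem actually lives.

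First, you never give a workable mechanism for bounding $\mu(\theta)=\theta^{\alpha}$ from above. Your proposal is a Moser-type iteration on $\eqref{1}_3$ to get $\|\theta\|_{L^\infty}$, but with vacuum the evolution operator $\rho\partial_t-\dive(\kappa\nabla\cdot)$ is degenerate ($\int\rho\,\theta^p$ does not control $\int\theta^p$), the source $2\theta^{\alpha}|D(u)|^2$ depends on $\theta$ itself, and no uniform-in-data $L^\infty$ bound on $\theta$ is ever established in the paper — nor is one needed. The paper's central device (Proposition \ref{P0}) is a bootstrap on $A(T)=\sup_t\|\theta\|_{H^2}^2+\int\big((1+t)\|\theta_t\|_{L^2}^2+(1+t^2)\|\nabla\theta_t\|_{L^2}^2\big)dt$: assuming $A(T)\le 2M$ one has $\theta^{\alpha}\le\|\theta\|_{H^2}^{\alpha}\le(2M)^{\alpha/2}\le C$ \emph{only because} $\alpha\le M^{-1}$, i.e.\ the viscosity is tamed not by bounding $\theta$ but by choosing $\alpha$ after the (possibly huge) bootstrap constant $M=C\exp(C/c_0)$. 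Closing the bootstrap is also what fixes $c_0$, $M$ and $\epsilon_0$ in the order stated in \eqref{28}, and it is exactly here that \eqref{79} is consumed: the vacuum splitting is applied to $\|\theta_t\|_{L^2}^2$ in the $\sqrt{\rho}\theta_t$-energy estimate (the term $I_{10}$ of Lemma \ref{L5}), producing a coefficient $C|V|^{1/2}M^{\beta+1}$ that must be $\le 1/8$; since $M\sim\exp(C/c_0)$, one needs $|V|\le\exp(-1/c_0^2)$ rather than merely $|V|$ small. Your use of $|V|$-smallness to control $\|u\|_{L^2}$ addresses a different (and less critical) point and does not explain this balance of exponentials.

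Second, your Gr\"onwall closure for $\|\nabla u\|_{L^2}$ is circular as written. The coefficient you exhibit is $\|u\|_{L^2}^2\|\nabla u\|_{L^2}^2$, and your own vacuum estimate gives $\|u\|_{L^2}^2\le c_0^{-1}\|\sqrt\rho u\|_{L^2}^2+C|V|\log(1/|V|)\,\|\nabla u\|_{L^2}^2$; substituting, the Gr\"onwall coefficient contains $\|\nabla u\|_{L^2}^4$, whose time-integral is controlled only by $\sup_t\|\nabla u\|_{L^2}^2\cdot\int\|\nabla u\|_{L^2}^2\,dt$ — i.e.\ by the quantity you are trying to bound — so the argument does not close without a further continuity/smallness step that you do not supply. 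The paper circumvents this with the Desjardins inequality (Lemma \ref{L04}),
\begin{equation*}
\Vert\sqrt{\rho}u\Vert_{L^4}^2\le C(1+\Vert\sqrt{\rho}u\Vert_{L^2})\Vert\nabla u\Vert_{L^2}\sqrt{\log(2+\Vert\nabla u\Vert_{L^2}^2)},
\end{equation*}
which turns the dangerous term into $\big(\alpha\|\theta_t\|_{L^2}^2+\|\nabla u\|_{L^2}^2\big)\,\|\nabla u\|_{L^2}^2\log(2+\|\nabla u\|_{L^2}^2)$ with an integrable prefactor, and then closes by the Bihari--LaSalle inequality (Lemma \ref{LA1}) rather than linear Gr\"onwall. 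This log-interpolation plus nonlinear Gr\"onwall step is what makes the estimate work for arbitrarily large data in 2D with vacuum; without it, or an equally robust substitute, the proof does not go through.
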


 \begin{remark}
 In the special case that   the initial density does not contain vacuum, i.e., $\rho_0\geq \underline{\rho}>0$, it is clear that conditions \eqref{62} and \eqref{79} are naturally satisfied since  we can  take $c_0=\frac{1}{2}\underline{\rho}$. Therefore, the conclusion in our Theorem \ref{T1} holds directly for the case of initial density without vacuum.
 \end{remark}

\begin{remark}
From the subsequent proofs \eqref{27}-\eqref{64}, we find that condition \eqref{79},which is inspired by the literature \cite{Cao}, can be reduced to
\begin{itemize}
  \item there exists a positive constant $c_0$ such that
    \begin{align}\label{83}
      |V|\leq\frac{\exp(-\frac{C(2\beta+2)}{c_0})}{64C^{2\beta+4}},
    \end{align}
  where $C$ is a positive constant defined in \eqref{27} and depending only on $\tilde{\rho},\underline{\theta}, \Omega$ and $\Vert(u_0,\theta_0)\Vert_{H^2}$.
\end{itemize}
In fact, the conditions \eqref{79} and \eqref{83} are essentially similar in that they both indicate that: $(1)$ the measure of the initial vacuum domain is sufficiently small; $(2)$ $\rho_0$ near the vacuum grows to $c_0$ at a very fast rate (this growth rate depends on $\tilde{\rho},\underline{\theta}, \Omega$ and $\Vert(u_0,\theta_0)\Vert_{H^2}$).
\end{remark}

\begin{remark}
In order to understand  the condition \eqref{79}, we mention that the following class of initial data is a special case of \eqref{49}-\eqref{79} provided $c_0$ is  small enough.
Assume $\Omega=B_1(0)=\{x\in \mathbb{R}^2 ~| ~|x|\leq1\}$, for any $k_1, k_2\in(0,2)$ and $\epsilon=\pi^{-\frac{1}{2}}\exp(-\frac{1}{2c_0^2})$, let
\begin{align*}
  \rho_0(x)
  =\left\{
     \begin{array}{ll}
       ~~~ 0, & |x|\leq\frac{\epsilon}{2}\\
       c_0(\frac{2}{\epsilon}|x|-1)^{k_1}, & \frac{\epsilon}{2}\leq|x|\leq \epsilon \\
       (\frac{2(1-c_0^{\frac{1}{k_2}})}{\epsilon}|x|-2+3c_0^{\frac{1}{k_2}})^{k_2}, &
        \epsilon\leq|x|\leq \frac{3\epsilon}{2} \\
       ~~~ 1, & \frac{3\epsilon}{2}\leq|x|\leq1
     \end{array}
   \right.
\end{align*}
and $(u_0, \theta_0)\in C^2$ satisfy the initial regularity conditions \eqref{49} and the compatibility conditions \eqref{62}. Thus,  we can check that
\begin{align*}
  |V|=|\{x\in\Omega ~| ~\rho_0(x)\leq c_0\}|
  = \pi\epsilon^2
  =\exp(-\frac{1}{c_0^2}).
\end{align*}
\end{remark}

\begin{remark}
It seems that Theorem \ref{T1} is the first study concerning the 2D problem \eqref{1}-\eqref{4}  for arbitrarily large initial data with vacuum and temperature-dependent coefficients.   This is in sharp contrast to Zhong\cite{Zhong2018,Zhong2022} and Guo-Li\cite{Guo} where they need either $\mu,\kappa=const.$, or the smallness assumptions on both $\Vert\sqrt{\rho_0}u_0\Vert_{L^2}$ and $\Vert\rho_0\theta_0\Vert_{L^1}$ without initial vacuum.
  Besides the initial mass, velocity and temperature being arbitrarily large although $\alpha$ is small, $u$ and $\theta$ are all exponentially decaying as time tends to infinity. Moreover, it is easy to see that $\rho$ is not have any decay estimate due to $\dive u=0$ (see Remark \ref{r3} in section \ref{S5} for the details).
\end{remark}

We now make some comments on the analysis in this paper. To extend the local strong solution whose existence is obtained by Lemma \ref{L5.1} globally in time, one needs to establish global a priori estimates on smooth solution to \eqref{1}-\eqref{4} in suitable higher norms. There is extremely strong nonlinearity and degeneracy caused by transport coefficients, both of which create great difficulty for the a priori estimates, especially for the second-order estimates.   The main idea is to combine the bootstrap argument and time-weighted estimates successfully applied to the Navier-Stokes equations. It turns out that as in \cite{Guo}, the key ingredient here is to obtain the time-independent bounds on $\theta(x,t)$. However, the methods applied in \cite{Guo,Zhong2022} rely crucially on the smallness of the initial data or $\mu,\kappa=const.$. Therefore, some new ideas are needed here. First of all, according to the regularity properties of Stokes system and the smallness of $\alpha$, using an inequality derived by Desjardins\cite{Desjardins}(see Lemma \ref{L04}), we establish the a priori estimates of $\theta$ in suitable norms. Here, the main obstacle comes from the estimate of $\theta$ in vacuum, which strongly interacts on the velocity field. Motivated by \cite{Cao}, we divide the integration region into two parts (see \eqref{82}) and get the estimates of the temperature from the condition \eqref{79}. Then, by using the Poincar\'{e}'s inequality and carefully calculations, we derive the decay estimates of the solution $(u,\theta,P)$. Meanwhile, we obtain the higher-order estimates of the solution owing to $L^2$-theory of elliptic equations. Finally, applying these a priori estimates and the fact that the velocity is divergent free, we can extend the local strong solution globally in time.

The rest of the paper is organized as follows: In section \ref{S3}, we present some basic facts and inequalities which will be used later. Section \ref{S4} concerns some a priori estimates on smooth solution which are needed to extend the local solution to all time. Finally,  with all a priori estimates at hand, the main result Theorem \ref{T1} is proved in section \ref{S5}.


\section{ Preliminaries}\label{S3}

In this section we shall enumerate some auxiliary lemmas used in this paper.
We first give the famous  Gr\"{o}nwall's inequality\cite{HAmann} which will play an essential role in the energy estimate of $(\rho,u,\theta)$.
\begin{lemma}\label{L01}
Suppose that $f_1(t),f_2(t): [0,T]\rightarrow \mathbb{R}$ are nonnegative bounded measurable function,
$c(t): [0,T]\rightarrow \mathbb{R}$ is a nonnegative integrable function. If $f_1(t), f_2(t), c(t)$ satisfy
\begin{equation*}
  f_1(t)\leq f_2(t)+\int_{0}^{t}c(s)f_1(s)ds, ~~~ \forall ~ t\in[0,T],
\end{equation*}
then it follows that
\begin{equation*}
  f_1(t)\leq f_2(t)+\int_{0}^{t}f_2(s)c(s)\exp\big(\int_{s}^{t}c(\tau)d\tau\big)ds, ~~~ \forall ~ t\in[0,T].
\end{equation*}
Moreover, if $f_2(t)$ is a monotone increasing function over $[0,T]$, then we obtain the estimate
\begin{equation*}
  f_1(t)\leq f_2(t)\exp\big(\int_{0}^{t}c(s)ds\big), ~~~ \forall ~ t\in[0,T].
\end{equation*}
\end{lemma}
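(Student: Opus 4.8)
The plan is to reduce the integral inequality to a differential inequality for the accumulated term and then resolve that inequality with an integrating factor. First I would set $C(t) \triangleq \int_0^t c(s) f_1(s)\, ds$. Since $c$ is integrable and $f_1$ is bounded, the product $c f_1$ is integrable, so $C$ is absolutely continuous on $[0,T]$ with $C(0)=0$ and $C'(t) = c(t) f_1(t)$ for a.e. $t$. The hypothesis reads $f_1(t) \leq f_2(t) + C(t)$, so using $c(t)\geq 0$ I obtain
\[
C'(t) = c(t) f_1(t) \leq c(t) f_2(t) + c(t) C(t) \qquad \text{for a.e. } t .
\]

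Next I would introduce the integrating factor $E(t) \triangleq \exp\!\big(-\int_0^t c(\tau)\,d\tau\big)$, which is absolutely continuous with $E'(t) = -c(t) E(t)$; then $C E$ is absolutely continuous and the two previous relations combine into
\[
\frac{d}{dt}\big(C(t) E(t)\big) = \big(C'(t) - c(t) C(t)\big) E(t) \leq c(t) f_2(t) E(t) \qquad \text{for a.e. } t .
\]
Integrating over $[0,t]$ with $C(0)=0$ and $E(0)=1$, then dividing by $E(t)>0$ and using $E(s)/E(t) = \exp\!\big(\int_s^t c(\tau)\,d\tau\big)$, gives
\[
C(t) \leq \int_0^t c(s) f_2(s) \exp\!\Big(\int_s^t c(\tau)\,d\tau\Big) ds ,
\]
and inserting this into $f_1(t) \leq f_2(t) + C(t)$ yields the first asserted bound.

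For the refined statement I would invoke monotonicity: if $f_2$ is increasing then $f_2(s)\leq f_2(t)$ for $s\leq t$, so $f_2(t)$ may be factored out of the integral. The remaining factor is explicit, since with $G(s)=\int_s^t c(\tau)\,d\tau$ one has $c(s)e^{G(s)} = -\frac{d}{ds}e^{G(s)}$, whence $\int_0^t c(s)e^{G(s)}ds = e^{G(0)}-e^{G(t)} = \exp\!\big(\int_0^t c(\tau)\,d\tau\big)-1$. Substituting collapses the bound to $f_1(t) \leq f_2(t)\exp\!\big(\int_0^t c(\tau)\,d\tau\big)$, as claimed.

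The only delicate point, and the one I would be careful to justify, is regularity: because $f_1$ is merely bounded and measurable rather than continuous, the differentiation steps cannot be read off from classical calculus and must instead be grounded in absolute continuity — recording that $c f_1$ is integrable (so $C$ is absolutely continuous), that $E$ and hence $C E$ are absolutely continuous, and that the fundamental theorem of calculus and the a.e. product rule therefore apply. A completely equivalent route that sidesteps differentiation is Picard-type iteration: substituting the hypothesis into itself $n$ times produces the partial sums of the claimed series together with a remainder bounded by $\frac{1}{n!}\big(\int_0^t c\big)^n \sup_{[0,T]} f_1 \to 0$; I would nonetheless keep the integrating-factor argument as the main line since it is shorter and more transparent.
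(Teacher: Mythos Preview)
Your argument is correct: the integrating-factor reduction is the standard proof of this integral form of Gr\"onwall's inequality, and you have handled the regularity issues (absolute continuity of $C$ and of $CE$, a.e.\ differentiation) carefully enough given that $f_1$ is only bounded measurable. The computation for the monotone case is also clean.

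As for comparison with the paper: there is nothing to compare. The paper does not prove this lemma at all; it merely states it and cites Amann's textbook \emph{Ordinary Differential Equations} as the source. So your proposal supplies a proof where the paper gives only a reference.
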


The following Bihari-LaSalle inequality (\cite{Bihari1956,Dhongade1976,LaSalle1949}) is a nonlinear generalization of Gr\"{o}nwall's inequality.

\begin{lemma}\label{LA1}
  Suppose that
\begin{itemize}
  \item $y(t)\geq 0$, \ $0\leq h(t)\in L^1(0,T)$,
  \item $0<w(y)$ is continuous and nondecreasing for $y>0$,
  \item $c_1, c_2$ are two positive constants.
\end{itemize}

If
\begin{align*}
  y(t)\leq c_1+c_2\int_0^th(s)w\big(y(s)\big)ds,  ~~~~~ \forall \, t\in[0,T],
\end{align*}
then
\begin{align}\label{81}
  y(t)\leq G^{-1}\Big(G(c_1)+c_2\int_0^th(s)ds\Big), ~~~~~ \forall \, t\in[0,T],
\end{align}
where
\begin{align*}
  G(x)=\int_{x_0}^x\frac{1}{w(y)}dy, ~~~~~ x\geq0,\, x_0>0,
\end{align*}
and $G^{-1}$ is the inverse of $G$, $T$ is chosen so that the right hand of \eqref{81} is well-defined.
\end{lemma}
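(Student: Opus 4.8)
The plan is to follow the classical comparison argument that reduces the nonlinear integral inequality to a separable first-order differential inequality. First I would introduce the majorant
\[
  u(t)\triangleq c_1+c_2\int_0^t h(s)\,w\big(y(s)\big)\,ds,
\]
so that the hypothesis reads exactly $y(t)\le u(t)$ for all $t\in[0,T]$, while $u(t)\ge c_1>0$. Since $h\in L^1(0,T)$ and the defining integral is assumed finite, the integrand $s\mapsto h(s)w(y(s))$ is integrable; hence $u$ is absolutely continuous with $u'(t)=c_2\,h(t)\,w(y(t))\ge 0$ for a.e.\ $t$, and in particular $u$ is nondecreasing with $u(0)=c_1$.

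The key step exploits the monotonicity of $w$. Because $w$ is nondecreasing and $y(t)\le u(t)$, we have $w(y(t))\le w(u(t))$, whence
\[
  u'(t)\le c_2\,h(t)\,w\big(u(t)\big)\qquad\text{for a.e. }t\in[0,T].
\]
As $u(t)\ge c_1>0$ and $w>0$ on $(0,\infty)$, I may divide by $w(u(t))$ to obtain $u'(t)/w(u(t))\le c_2\,h(t)$. Recalling $G'(x)=1/w(x)$, the chain rule for the composition of the $C^1$ function $G$ with the absolutely continuous function $u$ yields $\frac{d}{dt}G(u(t))=u'(t)/w(u(t))\le c_2\,h(t)$ a.e.

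I would then integrate this differential inequality over $[0,t]$ and use $u(0)=c_1$ to get
\[
  G\big(u(t)\big)\le G(c_1)+c_2\int_0^t h(s)\,ds.
\]
Because $1/w>0$, the map $G$ is strictly increasing, so $G^{-1}$ is well-defined and increasing on its range; applying it to both sides (legitimate precisely on the interval $[0,T]$ where, by assumption, the right-hand side of \eqref{81} lies in the domain of $G^{-1}$) gives $u(t)\le G^{-1}\big(G(c_1)+c_2\int_0^t h(s)\,ds\big)$. Combining this with $y(t)\le u(t)$ produces the claimed bound \eqref{81}.

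The main technical obstacle is the measure-theoretic justification of the two differentiation steps under the weak regularity available: that $u$ is absolutely continuous with $u'=c_2\,h\,w(y)$ a.e.\ (which rests only on $L^1$-integrability of the integrand, via the Lebesgue differentiation theorem), and that the chain rule $\frac{d}{dt}G(u)=u'/w(u)$ holds almost everywhere for the composition of a $C^1$ map with an absolutely continuous one. A minor subtlety to settle beforehand is that $w$ is assumed defined only for $y>0$, so the estimate $w(y(t))\le w(u(t))$ and the integrability of $w(y(\cdot))$ should be read with the convention $w(0)\triangleq\lim_{y\to 0^+}w(y)$, which exists since $w$ is monotone; because the majorant satisfies $u(t)\ge c_1>0$ throughout, the division by $w(u(t))$ is never in jeopardy.
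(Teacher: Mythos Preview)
Your argument is correct and is precisely the classical proof of the Bihari--LaSalle inequality: introduce the majorant $u(t)$, use monotonicity of $w$ to convert to a separable differential inequality for $u$, integrate via the antiderivative $G$, and invert. The paper itself does not supply a proof of this lemma; it simply states the result and cites the original references \cite{Bihari1956,Dhongade1976,LaSalle1949}, where the argument is exactly the one you give. Your attention to the measure-theoretic details (absolute continuity of $u$, the a.e.\ chain rule for $G\circ u$, the convention for $w(0)$) goes beyond what those references make explicit, but none of it is needed for the paper's purposes since the lemma is quoted as a known tool.
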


Because the initial data contains a vacuum, there are a lot of places that need to use the following Poincar\'{e} type inequality.

\begin{lemma}\label{L08}
Let $f\in H^1(\Omega)$, and $0\leq g\leq c_1$, $\int gdx\geq c_1^{-1}$. Then there exists a positive constant $C$ depending only on $c_1, p, \Omega$ such that
\begin{align*}
  \Vert f\Vert_{L^p}
  \leq C\Vert gf\Vert_{L^1}+C\Vert\nabla f\Vert_{L^2},  ~~~ \forall \, p\geq1.
\end{align*}
\end{lemma}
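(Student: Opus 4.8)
The plan is to argue by contradiction using a compactness argument. Suppose the stated inequality fails. Then for each $n\in\mathbb{N}$ there exist $f_n\in H^1(\Omega)$ and $g_n$ with $0\le g_n\le c_1$, $\int g_n\,dx\ge c_1^{-1}$, such that
\begin{align*}
  \Vert f_n\Vert_{L^p} > n\big(\Vert g_n f_n\Vert_{L^1} + \Vert\nabla f_n\Vert_{L^2}\big).
\end{align*}
By homogeneity we may normalize $\Vert f_n\Vert_{L^p}=1$, so that $\Vert g_n f_n\Vert_{L^1}\to 0$ and $\Vert\nabla f_n\Vert_{L^2}\to 0$. Since $\Omega$ is a bounded smooth domain and $p$ is fixed, I would first check that $(f_n)$ is bounded in $H^1$: indeed $\Vert\nabla f_n\Vert_{L^2}$ is bounded, and boundedness of $\Vert f_n\Vert_{L^2}$ follows either directly when $p\ge 2$ (from $\Vert f_n\Vert_{L^p}=1$ and $|\Omega|<\infty$) or, when $1\le p<2$, from the usual Poincaré inequality applied to $f_n-\bar f_n$ together with a bound on the mean $\bar f_n$ obtained from the $L^p$ normalization. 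Hence, up to a subsequence, $f_n\rightharpoonup f$ weakly in $H^1(\Omega)$ and, by Rellich–Kondrachov, $f_n\to f$ strongly in $L^p(\Omega)$ (valid for every $p\in[1,\infty)$ in two dimensions), so $\Vert f\Vert_{L^p}=1$ and in particular $f\not\equiv 0$.

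Next I would identify the limit. From $\Vert\nabla f_n\Vert_{L^2}\to 0$ and weak lower semicontinuity of the $H^1$ seminorm we get $\nabla f=0$, so $f$ is a constant on $\Omega$ (using connectedness of $\Omega$, which is implicit in "bounded smooth domain"); call it $f\equiv a$ with $a\ne 0$. Now I pass to the limit in the weight term. Passing to a further subsequence, $g_n\rightharpoonup^* g$ weak-$*$ in $L^\infty(\Omega)$ for some $g$ with $0\le g\le c_1$ and $\int g\,dx\ge c_1^{-1}$ (these constraints are preserved under weak-$*$ limits). Then, writing $g_n f_n = g_n(f_n-a) + a\,g_n$, the first term tends to $0$ in $L^1$ because $\Vert g_n(f_n-a)\Vert_{L^1}\le c_1\Vert f_n-a\Vert_{L^1}\le C\Vert f_n-f\Vert_{L^p}\to 0$, while $a\,g_n\rightharpoonup^* a\,g$ so $\int a\,g_n\,dx\to a\int g\,dx$. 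Therefore $\Vert g_n f_n\Vert_{L^1}\to 0$ forces $|a|\int g\,dx = 0$, hence $\int g\,dx=0$, contradicting $\int g\,dx\ge c_1^{-1}>0$. This contradiction establishes the inequality, with the constant $C$ depending only on $c_1$, $p$ and $\Omega$ through the constants in the Poincaré and Rellich embeddings (no explicit value is needed).

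The main obstacle I anticipate is the low-exponent range $1\le p<2$: there the $L^p$ normalization does not immediately control $\Vert f_n\Vert_{L^2}$, so the $H^1$-boundedness step needs the splitting into mean part and oscillation part described above, and one must be careful that the mean $\bar f_n$ stays bounded. A cleaner alternative, which avoids the contradiction scheme entirely, is a direct estimate: write $f = (f-\langle f\rangle_g) + \langle f\rangle_g$ where $\langle f\rangle_g := (\int g\,dx)^{-1}\int gf\,dx$ is the $g$-weighted mean; then $|\langle f\rangle_g|\le c_1\Vert gf\Vert_{L^1}$, while $\Vert f-\langle f\rangle_g\Vert_{L^p}$ is controlled by $\Vert\nabla f\Vert_{L^2}$ via a Poincaré–Sobolev inequality with weighted mean (itself provable by a compactness argument, or by comparing the weighted mean with the ordinary mean $\bar f$ and using $|\langle f\rangle_g - \bar f|\le C(\int g\,dx)^{-1}\Vert g\Vert_{L^2}\Vert f-\bar f\Vert_{L^2}\le C\Vert\nabla f\Vert_{L^2}$). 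Either route yields the claim; I would present the contradiction argument as the primary one since it is shortest and the dependence of $C$ is transparent.
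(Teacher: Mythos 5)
Your compactness argument is correct, but it is a genuinely different route from the paper's. The paper proves the lemma directly in two lines: it compares the ordinary mean $\overline{f}=\frac{1}{|\Omega|}\int f\,dx$ with the weighted integral via
\begin{align*}
  \big|\overline{f}\big|\int g\,dx
  =\Big|\int gf\,dx+\int g(\overline{f}-f)\,dx\Big|
  \leq \Vert gf\Vert_{L^1}+C\Vert\nabla f\Vert_{L^2},
\end{align*}
uses $\int g\,dx\geq c_1^{-1}$ to bound $|\overline{f}|$, and then splits $f=(f-\overline{f})+\overline{f}$ with the standard Poincar\'e--Sobolev inequality for the oscillation part. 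This is essentially the ``cleaner alternative'' you sketch in your last paragraph (the paper uses the ordinary mean rather than the $g$-weighted mean, which makes the comparison step even shorter), and it has the advantage of giving a transparent, explicit constant $C(c_1,p,\Omega)$ with no appeal to compactness. Your contradiction argument buys generality and robustness (it would survive weakening of the hypotheses on $g$), and you correctly handle the two points where it could go wrong: the sequence $g_n$ must be allowed to vary (the constant has to be uniform in $g$, since the lemma is applied with $g=\rho(\cdot,t)$ for all $t$), and the $H^1$-boundedness for $1\leq p<2$ requires the mean/oscillation splitting you describe. One small simplification: you do not need the weak-$*$ limit of $g_n$ at all, since $g_n\geq0$ gives directly $\Vert g_nf_n\Vert_{L^1}\geq|a|\int g_n\,dx-c_1\Vert f_n-a\Vert_{L^1}\geq|a|c_1^{-1}-o(1)$, which already contradicts $\Vert g_nf_n\Vert_{L^1}\to0$.
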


\begin{proof}
It follows from Poincar\'{e}'s inequality that
  \begin{align*}
    \big|\overline{g}\int f dx\big|
    =\big|\int g dx\overline{f}\big|
    =\big|\int g fdx+\int g(\overline{f}- f)dx\big|
    \leq \Vert gf\Vert_{L^1}+C\Vert\nabla f\Vert_{L^2},
  \end{align*}
where $\overline{g}=\frac{1}{|\Omega|}\int gdx$. Thus, $\forall \, p\geq1$,
\begin{align*}
  \Vert f\Vert_{L^p}
  &\leq\Vert f-\overline{f}\Vert_{L^p}+\Vert\overline{f}\Vert_{L^p}\\
  &\leq C\Vert\nabla f\Vert_{L^2}+C|\overline{f}|\\
  &\leq C\Vert gf\Vert_{L^1}+C\Vert\nabla f\Vert_{L^2}.
\end{align*}
We complete the proof of this lemma.
\end{proof}

Next, we present Gagliardo-Nirenberg inequality (\cite{Galdi,Nirenberg}) that is frequently employed in our proof.
\begin{lemma}\label{L02}
Let $u\in L^n(\Omega)\cap L^{\tilde{n}}(\Omega)$,
with $\nabla^iu\in L^m(\Omega)$, $i>0$, $\tilde{n}>0$, $1\leq m, n\leq \infty$. Then, $\nabla^ju\in L^k(\Omega)$ and the following inequality holds
for $0\leq j<i$ and some $C= C( i, j, m, n, \gamma, \Omega)$:
\begin{align*}
  \Vert\nabla^ju\Vert_{L^k}\leq C\big(\Vert\nabla^iu\Vert_{L^m}^\gamma\Vert u\Vert_{L^n}^{1-\gamma}
   +\Vert u\Vert_{L^{\tilde{n}}}\big),
\end{align*}
where
\begin{align*}
  \frac{1}{k}-\frac{j}{2}=(\frac{1}{m}-\frac{i}{2})\gamma+\frac{1}{n}(1-\gamma),
\end{align*}
for all $\gamma$ in the interval $\frac{j}{i}\leq \gamma \leq 1$. If $1<m<\infty$ and $i-j-\frac{2}{m}$ is a non-negative integer, then it is necessary to assume also that $\gamma\neq1$.
\end{lemma}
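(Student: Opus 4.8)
The plan is to establish this Gagliardo--Nirenberg inequality on the bounded smooth domain $\Omega\subset\mathbb{R}^2$ by first proving the scale-invariant (homogeneous) version on the whole plane and then transferring it to $\Omega$ through a Sobolev extension operator. The additive term $\Vert u\Vert_{L^{\tilde n}}$ is precisely what the extension step forces: on a bounded domain the low-frequency (in particular constant) part of $u$ cannot be controlled by $\nabla^i u$ alone, so a genuinely multiplicative homogeneous estimate is unavailable and a lower-order corrector is unavoidable.

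First I would treat the whole-space homogeneous estimate
\[
\Vert\nabla^j v\Vert_{L^k(\mathbb{R}^2)}\leq C\,\Vert\nabla^i v\Vert_{L^m(\mathbb{R}^2)}^{\gamma}\,\Vert v\Vert_{L^n(\mathbb{R}^2)}^{1-\gamma}
\]
for smooth compactly supported $v$. The backbone is the one-dimensional building block obtained by integration by parts: moving a single derivative off a power of $\partial_l v$ and applying H\"older yields a first-order interpolation of the schematic form $\Vert\partial_l v\Vert\lesssim\Vert v\Vert^{1/2}\,\Vert\partial_l^2 v\Vert^{1/2}$. Applying this slice by slice and recombining across the two coordinate directions by Fubini and H\"older promotes it to $\mathbb{R}^2$, and a finite iteration reduces general orders $0\leq j<i$ to repeated first-order steps. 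The admissible exponents are pinned down by dimensional analysis: testing on the rescaled family $v(\lambda\,\cdot)$ and demanding scale invariance forces exactly $\frac{1}{k}-\frac{j}{2}=\gamma(\frac{1}{m}-\frac{i}{2})+(1-\gamma)\frac{1}{n}$, while $\frac{j}{i}\leq\gamma\leq1$ is the range over which the first-order inequalities can be chained consistently.

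Next I would pass to $\Omega$. Since $\partial\Omega$ is smooth, there is a bounded linear extension $E:W^{i,m}(\Omega)\to W^{i,m}(\mathbb{R}^2)$ with $Eu=u$ on $\Omega$ and $\Vert Eu\Vert_{W^{l,p}(\mathbb{R}^2)}\leq C\Vert u\Vert_{W^{l,p}(\Omega)}$ for the relevant indices. Applying the whole-space inequality to $Eu$ and restricting back to $\Omega$ produces the desired left-hand side, but the right-hand side now carries full Sobolev norms of $Eu$ rather than pure seminorms. I would control these by $\Vert\nabla^i u\Vert_{L^m(\Omega)}+\Vert u\Vert_{L^{\tilde n}(\Omega)}$, absorbing the lower-order contributions of the extension, after Young's inequality, into the additive term $C\Vert u\Vert_{L^{\tilde n}}$, which is precisely the cost of working on a bounded domain.

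The main obstacle is the exceptional borderline case flagged in the statement: when $1<m<\infty$ and $i-j-\frac{2}{m}$ is a nonnegative integer, the endpoint $\gamma=1$ corresponds to a critical Sobolev embedding that genuinely fails (the prototype being $W^{1,2}\not\hookrightarrow L^\infty$ in the plane). There the naive chaining of first-order steps breaks down at the endpoint, so one must either exclude $\gamma=1$, as the lemma does, or replace it by a logarithmically corrected estimate. Keeping careful track of which tuples $(\gamma,i,j,m)$ hit this endpoint, and verifying that the interpolation constant stays finite away from it, is the delicate part of the argument.
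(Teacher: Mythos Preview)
The paper does not prove this lemma at all: it is stated as the classical Gagliardo--Nirenberg interpolation inequality and simply cited from \cite{Galdi,Nirenberg}, with no argument given. Your sketch, proving the homogeneous whole-space inequality by iterated one-dimensional integration-by-parts estimates and then transferring to the bounded domain via a Sobolev extension (picking up the additive lower-order term), is essentially Nirenberg's original approach and is a correct outline of what those references contain. So there is nothing to compare against in the paper itself; your proposal is a reasonable account of the standard proof, though for the purposes of this paper a citation suffices.
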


We then state some elementary estimates for the following nonhomogeneous Stokes equations, which is used to be get the derivations of high order estimates of $u$:
\begin{align}\label{5}
\left\{\begin{array}{l}
  -\dive\big(2\mu(\theta)D(u)\big)+\nabla P=F,  ~~ \text{in}~\Omega,\\
  \dive u=0,  ~~~~~~~~~~~~~~~~~~~~~~~~~~~~\, \text{in}~\Omega,\\
  u=0,  ~~~~~~~~~~~~~~~~~~~~~~~~~~~~~~~~~ \text{on}~\partial\Omega,\\[1mm]
  \displaystyle\int\frac{P}{\mu(\theta)}dx=0.
\end{array}\right.
\end{align}

\begin{lemma}[\cite{Galdi,Huang1,Huang2}]\label{L03}
Assume that $\underline{\mu}\leq\mu(\theta)\leq\overline{\mu}$ and $\nabla\mu(\theta)\in L^k$ for some $k\in(2,\infty)$. Let $(u,P)\in H_0^1\times L^2$ be the unique weak solution to the problem \eqref{5}, then there exists a positive constant $C=C(k, \underline{\mu}, \overline{\mu},\Omega)$ such that the following regularity results hold true:
\begin{itemize}
  \item [(1)] If $F\in L^r$ for some $r\in[2,k)$, then $(u,P)\in W^{2,r}\times W^{1,r}$ and
  \begin{align*}
    \Vert u\Vert_{W^{2,r}}+\Vert \frac{P}{\mu(\theta)}\Vert_{W^{1,r}}
    \leq C\Vert F\Vert_{L^r}\big(1+\Vert\nabla\mu(\theta)\Vert_{L^k}^
    {\frac{k}{k-2}\cdot\frac{2r-2}{r}}\Big).
  \end{align*}
  \item [(2)] If $F\in H^1$ and $\nabla\mu(\theta)\in H^1$, then $(u,P)\in H^3\times H^2$ and
  \begin{align*}
    \Vert u\Vert_{H^3}+\Vert \frac{P}{\mu(\theta)}\Vert_{H^2}
    \leq C\Vert F\Vert_{H^1}(1+\Vert\nabla\mu(\theta)\Vert_{H^1}^{\frac{k}{k-2}+2}).
  \end{align*}
\end{itemize}
\end{lemma}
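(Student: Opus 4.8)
The plan is to \emph{freeze the coefficient}: I would rewrite the variable-viscosity Stokes system \eqref{5} as a constant-coefficient Stokes problem whose forcing term carries all the $\nabla\mu(\theta)$-dependence, then invoke the classical $L^r$- and $H^k$-regularity theory for the constant-coefficient Stokes operator and absorb the resulting top-order terms by Gagliardo--Nirenberg interpolation together with Young's inequality. Writing $\mu=\mu(\theta)$ throughout, the reduction rests on the pointwise identity (valid since $\dive u=0$)
\[
  \dive\big(2\mu D(u)\big)=\mu\Delta u+2D(u)\nabla\mu .
\]
Setting $Q:=P/\mu$, so that the normalization $\int Q\,dx=0$ coincides with the one in \eqref{5}, dividing the momentum equation by $\mu$ and using $\nabla P/\mu=\nabla Q+Q\nabla\mu/\mu$ turns \eqref{5} into
\[
  -\Delta u+\nabla Q=G,\quad \dive u=0\ \text{in}\ \Omega,\quad u=0\ \text{on}\ \partial\Omega,\quad \int Q\,dx=0,
\]
with $G:=F/\mu+2D(u)\nabla\mu/\mu-Q\nabla\mu/\mu$. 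For this problem the classical Stokes regularity theory (e.g.\ \cite{Galdi}) gives $\Vert u\Vert_{W^{2,r}}+\Vert Q\Vert_{W^{1,r}}\le C\Vert G\Vert_{L^r}$ and $\Vert u\Vert_{H^3}+\Vert Q\Vert_{H^2}\le C\Vert G\Vert_{H^1}$ with $C=C(r,\Omega)$; since $\underline{\mu}\le\mu\le\overline{\mu}$ the factors $1/\mu$ and $1/\mu^2$ are harmless, so everything reduces to bounding $\Vert G\Vert_{L^r}$ and $\Vert G\Vert_{H^1}$.

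For part (1), I would estimate by H\"older with $\frac1r=\frac1k+\frac1s$, obtaining $\Vert D(u)\nabla\mu/\mu\Vert_{L^r}+\Vert Q\nabla\mu/\mu\Vert_{L^r}\le C\Vert\nabla\mu\Vert_{L^k}(\Vert\nabla u\Vert_{L^s}+\Vert Q\Vert_{L^s})$, and then interpolate the $L^s$ norms via Lemma \ref{L02} between the top-order norms $\Vert\nabla^2u\Vert_{L^r},\Vert\nabla Q\Vert_{L^r}$ and the $L^2$ norms $\Vert\nabla u\Vert_{L^2},\Vert Q\Vert_{L^2}$. A direct computation shows the interpolation exponent $\gamma\in(0,1)$ satisfies $\frac{1}{1-\gamma}=\frac{k}{k-2}\cdot\frac{2r-2}{r}$, which is precisely the power appearing in the statement. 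Young's inequality then gives, for any $\varepsilon>0$,
\[
  \Vert\nabla\mu\Vert_{L^k}\Vert\nabla^2u\Vert_{L^r}^{\gamma}\Vert\nabla u\Vert_{L^2}^{1-\gamma}\le \varepsilon\Vert\nabla^2u\Vert_{L^r}+C_\varepsilon\Vert\nabla\mu\Vert_{L^k}^{\frac{1}{1-\gamma}}\Vert\nabla u\Vert_{L^2},
\]
and similarly for $Q$; choosing $\varepsilon$ small absorbs the $W^{2,r}\times W^{1,r}$ norms into the left-hand side of the Stokes estimate. The leftover low-order norms are controlled by the basic energy estimate of the given weak solution: testing \eqref{5} against $u$ and using Korn's inequality yields $\Vert\nabla u\Vert_{L^2}\le C\Vert F\Vert_{L^2}$, while the inf-sup (LBB/Bogovskii) bound together with $\int Q\,dx=0$ gives $\Vert Q\Vert_{L^2}\le C\Vert F\Vert_{L^2}$; since $\Omega$ is bounded and $r\ge2$, both are $\le C\Vert F\Vert_{L^r}$. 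Collecting terms produces exactly $\Vert u\Vert_{W^{2,r}}+\Vert Q\Vert_{W^{1,r}}\le C\Vert F\Vert_{L^r}\big(1+\Vert\nabla\mu\Vert_{L^k}^{\frac{k}{k-2}\cdot\frac{2r-2}{r}}\big)$.

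Part (2) I would treat as a higher-regularity bootstrap on top of part (1). With $F\in H^1$ and $\nabla\mu\in H^1$, I estimate $\Vert G\Vert_{H^1}$ directly; differentiating $G$ produces products such as $\nabla^2\mu\,D(u)$, $\nabla\mu\,\nabla^2u$, $\nabla F/\mu$, $F\nabla\mu/\mu^2$ and their pressure analogues. Using the two-dimensional embeddings $H^1\hookrightarrow L^p$ for all $p<\infty$ (so in particular $\Vert\nabla\mu\Vert_{L^k}\le C\Vert\nabla\mu\Vert_{H^1}$) and $H^2\hookrightarrow L^\infty$ (so $\Vert\nabla u\Vert_{L^\infty}\le C\Vert u\Vert_{H^3}$), every product is controlled by $\Vert F\Vert_{H^1}$, $\Vert\nabla\mu\Vert_{H^1}$ and the top-order norms $\Vert u\Vert_{H^3},\Vert Q\Vert_{H^2}$. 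Interpolating and absorbing as in part (1)—now feeding the part-(1) bound in to control the intermediate $W^{2,r}$-norm of $u$—gives $\Vert u\Vert_{H^3}+\Vert Q\Vert_{H^2}\le C\Vert F\Vert_{H^1}\big(1+\Vert\nabla\mu\Vert_{H^1}^{\frac{k}{k-2}+2}\big)$, the extra power $+2$ coming from the two additional factors of $\nabla\mu$ generated by differentiation.

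The main obstacle is not the reduction but the quantitative bookkeeping of the interpolation in part (1): the Gagliardo--Nirenberg exponent must be chosen so that the top-order factor appears with power strictly below one (to permit absorption) while the leftover power of $\Vert\nabla\mu\Vert_{L^k}$ matches $\frac{k}{k-2}\cdot\frac{2r-2}{r}$ exactly, and this forces the interpolation to be taken against $\Vert\nabla u\Vert_{L^2}$ (which the weak solution controls) rather than against $u$ itself. In part (2) the analogous delicate point is the genuinely borderline product $\nabla^2\mu\,\nabla u$ in two dimensions, where $\nabla u$ cannot be placed in $L^\infty$ carelessly and must be balanced against the $H^3$-norm with an absorbable power.
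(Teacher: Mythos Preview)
Your proposal is correct and follows essentially the same route as the paper: rewrite \eqref{5} as a constant-coefficient Stokes system for $(u,Q)$ with $Q=P/\mu(\theta)$ and right-hand side $G=F/\mu+2D(u)\nabla\mu/\mu-Q\nabla\mu/\mu$, apply the classical $L^r$-Stokes estimate, control the $\nabla\mu$-terms by H\"older plus Gagliardo--Nirenberg interpolation between $\Vert\nabla u\Vert_{L^2}$ and $\Vert\nabla u\Vert_{W^{1,r}}$ (yielding exactly the exponent $\frac{k}{k-2}\cdot\frac{2r-2}{r}$), and absorb; for part (2) the paper likewise estimates $\Vert G\Vert_{H^1}$ term by term, handling the borderline product $\nabla^2\mu\,\nabla u$ via $\Vert\nabla u\Vert_{L^\infty}\le C\Vert\nabla u\Vert_{L^2}^{1/2}\Vert\nabla^3 u\Vert_{L^2}^{1/2}+C\Vert\nabla u\Vert_{L^2}$ and feeding in the part-(1) bound with $r=2$, which is precisely the mechanism producing the exponent $\frac{k}{k-2}+2$. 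The only cosmetic difference is that the paper cites \cite[Lemma~2.1]{Huang2} for the baseline estimate $\Vert(\nabla u,Q)\Vert_{L^2}\le C\Vert F\Vert_{L^2}$, whereas you derive it directly from the energy identity and an inf-sup argument.
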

\begin{proof}
$\emph{(1)}$.
From \cite[Lemma 2.1]{Huang2},  we obtain that
\begin{align}\label{67}
  \Vert(\nabla u, \frac{P}{\mu(\theta)})\Vert_{L^2}
  \leq C\Vert F\Vert_{L^2}.
\end{align}
Then, the equation \eqref{5} can be rewritten as
\begin{align*}
  -\Delta u+\nabla\frac{P}{\mu(\theta)}
  =\frac{1}{\mu(\theta)}\big(F+2\nabla\mu(\theta)\cdot D(u)-\frac{P}{\mu(\theta)}\nabla\mu(\theta)\big),
\end{align*}
the classical theory for Stokes equations (\cite{Galdi,Huang1}) and Gagliardo-Nirenberg inequality (Lemma \ref{L02}) give that
\begin{align*}
  &\Vert u\Vert_{W^{2,r}}+\Vert\nabla\frac{P}{\mu(\theta)}\Vert_{L^r}\\
  &\leq C\Vert F\Vert_{L^r}+C\Vert\nabla\mu(\theta)\cdot D(u)\Vert_{L^r}
   +C\Vert\frac{P}{\mu(\theta)}\nabla\mu(\theta)\Vert_{L^r}\\
  &\leq C\Vert F\Vert_{L^r}+C\Vert\nabla\mu(\theta)\Vert_{L^k}
    \big(\Vert\nabla u\Vert_{L^{\frac{kr}{k-r}}}
   +\Vert\frac{P}{\mu(\theta)}\Vert_{L^{\frac{kr}{k-r}}}\big)\\
  &\leq C\Vert F\Vert_{L^r}+C\Vert\nabla\mu(\theta)\Vert_{L^k}
    \big(\Vert\nabla u\Vert_{L^2}^{\frac{kr-2r}{2kr-2k}}
    \Vert\nabla u\Vert_{W^{1,r}}^{\frac{kr-2k+2r}{2kr-2k}}
   +\Vert\frac{P}{\mu(\theta)}\Vert_{L^2}^{\frac{kr-2r}{2kr-2k}}
    \Vert\nabla\frac{P}{\mu(\theta)}\Vert_{L^r}^{\frac{kr-2k+2r}{2kr-2k}}\big)\\
  &\leq\frac{1}{2}\Vert\nabla u\Vert_{W^{1,r}}+\frac{1}{2}\Vert\nabla\frac{P}{\mu(\theta)}\Vert_{L^r}
   +C\Vert F\Vert_{L^r}+C\Vert\nabla\mu(\theta)\Vert_{L^k}^{\frac{2kr-2k}{kr-2r}}
   \Vert(\nabla u, \frac{P}{\mu(\theta)})\Vert_{L^2},
\end{align*}
which together with \eqref{67} yields
\begin{align}\label{68}
  \Vert u\Vert_{W^{2,r}}+\Vert\nabla\frac{P}{\mu(\theta)}\Vert_{L^r}
  \leq C\Vert F\Vert_{L^r}(1+\Vert\nabla\mu(\theta)\Vert_{L^k}^{\frac{k}{k-2}\frac{2r-2}{r}}).
\end{align}

$\emph{(2)}$. It follows from \eqref{5}, \eqref{68}, Gagliardo-Nirenberg inequality that
\begin{align*}
  &\Vert u\Vert_{H^3}+\Vert\frac{P}{\mu(\theta)}\Vert_{H^2}\\
  &\leq C\Vert\frac{1}{\mu(\theta)}\big(F+2\nabla\mu(\theta)\cdot D(u)
   -\frac{P}{\mu(\theta)}\nabla\mu(\theta)\big)\Vert_{H^1}\\
  &\leq C\Vert F\Vert_{L^2}(1+\Vert\nabla\mu(\theta)\Vert_{L^k}^{\frac{k}{k-2}})
   +C\Vert\nabla F\Vert_{L^2}
   +C\Vert F\nabla\mu(\theta)\Vert_{L^2}\\
  &\quad +C\Vert|\nabla\mu(\theta)|^2|\nabla u|\Vert_{L^2}
   +C\Vert|\nabla^2\mu(\theta)||\nabla u|\Vert_{L^2}
   +C\Vert|\nabla\mu(\theta)||\nabla^2 u|\Vert_{L^2}\\
  &\quad +C\Vert|\nabla\mu(\theta)|^2\frac{P}{\mu(\theta)}\Vert_{L^2}
   +C\Vert|\nabla\frac{P}{\mu(\theta)}||\nabla\mu(\theta)|\Vert_{L^2}
   +C\Vert\frac{P}{\mu(\theta)}|\nabla^2\mu(\theta)|\Vert_{L^2}\\
  &\leq C\Vert F\Vert_{L^2}(1+\Vert\nabla\mu(\theta)\Vert_{H^1}^{\frac{k}{k-2}})
   +C\Vert\nabla F\Vert_{L^2}
   +C\Vert F\Vert_{L^4}\Vert\nabla\mu(\theta)\Vert_{L^4}\\
  &\quad +C\Vert\nabla\mu(\theta)\Vert_{L^8}^2\Vert(\nabla u,\frac{P}{\mu(\theta)})\Vert_{L^4}
   +C\Vert\nabla^2\mu(\theta)\Vert_{L^2}\Vert(\nabla u,\frac{P}{\mu(\theta)})\Vert_{L^{\infty}}\\
  &\quad +C\Vert\nabla\mu(\theta)\Vert_{L^4}\Vert(\nabla^2 u,\nabla\frac{P}{\mu(\theta)})\Vert_{L^4}\\
  &\leq C\Vert F\Vert_{H^1}(1+\Vert\nabla\mu(\theta)\Vert_{H^1}^{\frac{k}{k-2}})
   +C\Vert\nabla\mu(\theta)\Vert_{H^1}^2\Vert(\nabla u,\frac{P}{\mu(\theta)})\Vert_{H^1}\\
  &\quad +C\Vert\nabla^2\mu(\theta)\Vert_{L^2}
    \Big(\Vert(\nabla u,\frac{P}{\mu(\theta)})\Vert_{L^2}^{\frac{1}{2}}
    \Vert(\nabla^3 u,\nabla^2\frac{P}{\mu(\theta)})\Vert_{L^2}^{\frac{1}{2}}
    +\Vert(\nabla u,\frac{P}{\mu(\theta)})\Vert_{L^2}\Big)\\
  &\quad +C\Vert\nabla\mu(\theta)\Vert_{H^1}
   \Big(\Vert(\nabla^2 u,\nabla\frac{P}{\mu(\theta)})\Vert_{L^2}^{\frac{1}{2}}
    \Vert(\nabla^3 u,\nabla^2\frac{P}{\mu(\theta)})\Vert_{L^2}^{\frac{1}{2}}
    +\Vert(\nabla^2 u,\nabla\frac{P}{\mu(\theta)})\Vert_{L^2}\Big)\\
  &\leq \frac{1}{2}\Vert(\nabla^3 u,\nabla^2\frac{P}{\mu(\theta)})\Vert_{L^2}
    +C\Vert F\Vert_{H^1}(1+\Vert\nabla\mu(\theta)\Vert_{H^1}^{\frac{k}{k-2}+2}).
\end{align*}
This completes the proof of Lemma \ref{L03}.
\end{proof}

Next, we give two inequalities that are extremely important for the estimate of $\nabla^2\theta$.

\begin{lemma}[\cite{Galdi}]\label{L05}
Let $v$ be a vector function with components in $W^{1,p}$, $p\in[1, \infty)$, and $v\cdot\mathbf{n}=0$ on $\partial\Omega$. Then
\begin{align*}
  \Vert v\Vert_{L^p}\leq C\Vert\nabla v\Vert_{L^p},
\end{align*}
where the constant $C$ depends only on $p$ and $\Omega$.
\end{lemma}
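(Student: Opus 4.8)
I would prove this by the classical compactness argument, the only point requiring real thought being the identification of the kernel of the gradient under the boundary constraint.

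The plan is to argue by contradiction. Suppose the inequality were false; then one could find a sequence $v_j\in W^{1,p}(\Omega)$ with $v_j\cdot\mathbf n=0$ on $\partial\Omega$, $\Vert v_j\Vert_{L^p}=1$ and $\Vert\nabla v_j\Vert_{L^p}\le 1/j$. This sequence is bounded in $W^{1,p}(\Omega)$, so by the Rellich--Kondrachov theorem (available for every $p\in[1,\infty)$ on a bounded smooth domain) a subsequence, still denoted $v_j$, would converge strongly in $L^p(\Omega)$ to some $v$; since in addition $\nabla v_j\to 0$ in $L^p$, that subsequence would in fact converge to $v$ in $W^{1,p}(\Omega)$, giving $\Vert v\Vert_{L^p}=1$ and $\nabla v\equiv0$. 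Continuity of the trace operator $W^{1,p}(\Omega)\to L^p(\partial\Omega)$ would then force $v\cdot\mathbf n=0$ on $\partial\Omega$ as well.

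The hard part is to rule this out, i.e.\ to show no nonzero $v$ can satisfy $\nabla v\equiv0$ and $v\cdot\mathbf n=0$. Here I would use that $\Omega$ is a (connected) bounded smooth domain: $\nabla v\equiv0$ makes $v\equiv c$ for a constant vector $c$, and for any unit vector $e$ one may pick $x_0\in\overline\Omega$ maximising $x\mapsto x\cdot e$, so that $x_0\in\partial\Omega$ with outward normal $\mathbf n(x_0)=e$, whence $0=v(x_0)\cdot\mathbf n(x_0)=c\cdot e$. Since $e$ is arbitrary this gives $c=0$, i.e.\ $v\equiv0$, contradicting $\Vert v\Vert_{L^p}=1$. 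Because the only inputs are the compact embedding $W^{1,p}\hookrightarrow L^p$, boundedness of the trace, and the geometry of $\partial\Omega$, the resulting constant depends only on $p$ and $\Omega$. The subtlety I would want to flag is that one genuinely works in $W^{1,p}$ (not $W^{1,p}_0$): the single condition $v\cdot\mathbf n=0$ already kills the constants, precisely because the outward normal of a bounded domain points in every direction.
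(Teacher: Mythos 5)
The paper does not prove this lemma at all: it is quoted from Galdi's book \cite{Galdi} as a known Poincar\'e-type inequality, so there is no in-paper argument to compare against. Your compactness proof is correct and is essentially the standard way such statements are established. All the ingredients you invoke are available in the stated generality: Rellich--Kondrachov gives compactness of $W^{1,p}(\Omega)\hookrightarrow L^p(\Omega)$ for every $p\in[1,\infty)$ on a bounded smooth (hence Lipschitz) domain, the trace $W^{1,p}(\Omega)\to L^p(\partial\Omega)$ is bounded down to $p=1$, and on a connected domain $\nabla v\equiv 0$ forces $v\equiv c$. The one fine point worth making explicit is that the trace condition $v\cdot\mathbf n=0$ holds only almost everywhere on $\partial\Omega$; for $v\equiv c$ the function $x\mapsto c\cdot\mathbf n(x)$ is continuous on the smooth boundary, so a.e.\ vanishing upgrades to everywhere vanishing, and your supporting-hyperplane argument (the maximizer of $x\mapsto x\cdot e$ lies on $\partial\Omega$ with $\mathbf n=e$) then yields $c=0$. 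An alternative one-line finish is $|c|^2|\Omega|=\int_\Omega\dive\big((c\cdot x)c\big)dx=\int_{\partial\Omega}(c\cdot x)(c\cdot\mathbf n)\,dS=0$, which avoids any discussion of which normals are attained. Either way the kernel is trivial and the constant produced by the contradiction argument depends only on $p$ and $\Omega$, as claimed.
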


\begin{lemma}[\cite{Luotao,Amrouche}]\label{L06}
Suppose that $\theta\in H^{k+2}, k\geq0$ and $\frac{\partial\theta}{\partial\mathbf{n}}\big|_{\partial\Omega}=0$. Then, it holds that
\begin{align*}
  \Vert\nabla^2\theta\Vert_{H^k}\leq C\big(\Vert\Delta\theta\Vert_{H^k}
   +\Vert\nabla\theta\Vert_{L^2}\big).
\end{align*}
\end{lemma}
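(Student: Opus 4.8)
The plan is to reduce the claimed inequality to the standard a priori regularity estimate for the Neumann Laplacian on the smooth bounded domain $\Omega$ (which is what the cited works \cite{Luotao,Amrouche} provide) and then upgrade the lower-order term on the right-hand side from $\Vert\theta\Vert_{L^2}$ to $\Vert\nabla\theta\Vert_{L^2}$ by the Poincar\'{e}--Wirtinger inequality. The point is that the estimate is purely a priori: $\theta$ is not assumed to solve any particular equation, only to lie in $H^{k+2}$ and satisfy the homogeneous Neumann condition, so $\Delta\theta$ plays the role of an arbitrary right-hand side.

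First I would pass to the mean-zero function $u:=\theta-\overline{\theta}$, where $\overline{\theta}=\frac{1}{|\Omega|}\int\theta\,dx$. Subtracting a constant leaves every derivative unchanged, so $\nabla^2 u=\nabla^2\theta$, $\Delta u=\Delta\theta$, $\nabla u=\nabla\theta$, and the Neumann condition $\frac{\partial u}{\partial\mathbf{n}}\big|_{\partial\Omega}=0$ is preserved; the gain is that $u$ now has zero average. The core input is the Agmon--Douglis--Nirenberg-type estimate for the Neumann problem on $\Omega$: any $w\in H^{k+2}$ with $\frac{\partial w}{\partial\mathbf{n}}\big|_{\partial\Omega}=0$ satisfies
\begin{align*}
  \Vert w\Vert_{H^{k+2}}\leq C\big(\Vert\Delta w\Vert_{H^k}+\Vert w\Vert_{L^2}\big),
\end{align*}
with $C=C(k,\Omega)$. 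Applying this with $w=u$ gives $\Vert u\Vert_{H^{k+2}}\leq C(\Vert\Delta\theta\Vert_{H^k}+\Vert u\Vert_{L^2})$, and then Poincar\'{e}--Wirtinger, $\Vert u\Vert_{L^2}=\Vert\theta-\overline{\theta}\Vert_{L^2}\leq C\Vert\nabla\theta\Vert_{L^2}$, replaces the last term. Since $\Vert\nabla^2\theta\Vert_{H^k}=\Vert\nabla^2 u\Vert_{H^k}\leq\Vert u\Vert_{H^{k+2}}$, the desired conclusion $\Vert\nabla^2\theta\Vert_{H^k}\leq C(\Vert\Delta\theta\Vert_{H^k}+\Vert\nabla\theta\Vert_{L^2})$ follows at once.

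If one wishes to make the displayed a priori estimate self-contained rather than quoting it, I would derive it from the basic (non-sharp) boundary regularity bound $\Vert w\Vert_{H^{k+2}}\leq C(\Vert\Delta w\Vert_{H^k}+\Vert w\Vert_{H^{k+1}})$ and absorb the $H^{k+1}$ term by a compactness--contradiction argument exploiting $H^{k+2}\hookrightarrow\hookrightarrow H^{k+1}$: a normalized counterexample sequence $w_n$ with $\Vert w_n\Vert_{H^{k+2}}=1$ and $\Vert\Delta w_n\Vert_{H^k}+\Vert w_n\Vert_{L^2}\to0$ would, up to a subsequence, converge in $H^{k+1}$; the basic estimate applied to differences then forces $w_n$ to be Cauchy, hence convergent, in $H^{k+2}$ to a limit $w$ with $\Vert w\Vert_{H^{k+2}}=1$, $\Delta w=0$, and $\Vert w\Vert_{L^2}=0$, so $w=0$, a contradiction. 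The only genuine requirement throughout is that the boundary elliptic theory apply, which is guaranteed by the standing hypothesis that $\Omega$ is a bounded smooth domain; I therefore anticipate no real obstacle, as the statement is classical and the mean-value reduction is the single idea that converts the $L^2$ remainder into the sharper $\Vert\nabla\theta\Vert_{L^2}$ form used later.
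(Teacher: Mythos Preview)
Your argument is correct: subtracting the mean and invoking the standard $H^{k+2}$ a priori estimate for the Neumann Laplacian, then using Poincar\'{e}--Wirtinger to convert $\Vert\theta-\overline{\theta}\Vert_{L^2}$ into $\Vert\nabla\theta\Vert_{L^2}$, yields exactly the claimed bound. The compactness--contradiction sketch you give for the ADN-type estimate is also sound.

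As for the comparison: the paper does not prove this lemma at all. It is stated with citations to \cite{Luotao,Amrouche} and used as a black box (in particular in \eqref{21} and \eqref{58}). So there is no ``paper's own proof'' to compare against; your write-up supplies a self-contained justification where the paper simply quotes the literature. In fact your mean-subtraction observation shows slightly more than is stated: since $u=\theta-\overline{\theta}$ has zero mean and satisfies the homogeneous Neumann condition, the Neumann Laplacian is an isomorphism on the mean-zero subspace, so one even gets $\Vert\nabla^2\theta\Vert_{H^k}\leq C\Vert\Delta\theta\Vert_{H^k}$ without the lower-order term; the extra $\Vert\nabla\theta\Vert_{L^2}$ on the right is harmless but not strictly needed.
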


Finally, for $u\in H_0^1(\Omega)$, by the Gagliardo-Nirenberg's inequality, we have
\begin{align}\label{6}
  \Vert u\Vert_{L^4}\leq C\Vert u\Vert_{L^2}^{\frac{1}{2}}\Vert\nabla u\Vert_{L^2}^{\frac{1}{2}}.
\end{align}
However, to deal with a nonhomogeneous problem with vacuum, some interpolation inequality for $u$ with degenerate weight like $\sqrt{\rho}$ is required. We look for a similar estimate for $\sqrt{\rho}u$ as in \eqref{6}. By zero extension of $u$ outside the bounded domain $\Omega$, we can derive the following lemma first established by Desjardins \cite{Desjardins} which reads as follows.

\begin{lemma}\label{L04}
Suppose that $0\leq \rho\leq\tilde{\rho}$, $u\in H_0^1(\Omega)$, then we have
\begin{align*}
  \Vert\sqrt{\rho}u\Vert_{L^4}^2\leq C(\tilde{\rho},\Omega)(1+\Vert\sqrt{\rho}u\Vert_{L^2})
  \Vert\nabla u\Vert_{L^2}\sqrt{\log(2+\Vert\nabla u\Vert_{L^2}^2)}.
\end{align*}
\end{lemma}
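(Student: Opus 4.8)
The goal is to prove the Desjardins-type logarithmic interpolation inequality
\[
  \Vert\sqrt{\rho}u\Vert_{L^4}^2\leq C(\tilde{\rho},\Omega)(1+\Vert\sqrt{\rho}u\Vert_{L^2})
  \Vert\nabla u\Vert_{L^2}\sqrt{\log(2+\Vert\nabla u\Vert_{L^2}^2)}
\]
for $0\le\rho\le\tilde\rho$ and $u\in H^1_0(\Omega)$. The plan is to reduce everything to the whole plane $\mathbb{R}^2$ by zero extension of $u$ (which is legitimate since $u\in H^1_0(\Omega)$, so the extension lies in $H^1(\mathbb{R}^2)$ with the same norms), and then to exploit the well-known failure of the critical Sobolev embedding $H^1(\mathbb{R}^2)\not\hookrightarrow L^\infty$ in its quantitative, logarithmic form. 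The weight $\sqrt\rho$ is handled crudely: since $0\le\rho\le\tilde\rho$ we have $\rho^2\le\tilde\rho\,\rho$, so
\[
  \Vert\sqrt{\rho}u\Vert_{L^4}^4=\int\rho^2u^4\,dx\le\tilde\rho\int\rho u^4\,dx\le\tilde\rho\,\Vert u\Vert_{L^\infty}^2\int\rho u^2\,dx=\tilde\rho\,\Vert u\Vert_{L^\infty}^2\Vert\sqrt\rho u\Vert_{L^2}^2 .
\]
Thus everything hinges on a good bound for $\Vert u\Vert_{L^\infty}$ — but of course $u$ need not be in $L^\infty$, so this has to be done via a Littlewood–Paley / frequency truncation argument with the non-integrable piece reabsorbed.

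The key step is the logarithmic Brezis–Gallouet–Wainger type estimate. I would decompose $u$ in frequency: for a parameter $N>0$ write $u=S_Nu+(u-S_Nu)$ where $S_N$ is a smooth Fourier cutoff to frequencies $\lesssim N$. For the low-frequency part, Bernstein's inequality in dimension two gives $\Vert S_Nu\Vert_{L^\infty}\lesssim (\log N)^{1/2}(\Vert u\Vert_{L^2}+\Vert\nabla u\Vert_{L^2})$ (summing the dyadic pieces $\Vert\Delta_ju\Vert_{L^\infty}\lesssim 2^j\Vert\Delta_ju\Vert_{L^2}$, using Cauchy–Schwarz over the $\log N$ many scales $2^j\le N$ against $\sum_j 2^{2j}\Vert\Delta_ju\Vert_{L^2}^2\lesssim\Vert\nabla u\Vert_{L^2}^2$). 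For the high-frequency tail, $\Vert u-S_Nu\Vert_{L^\infty}\lesssim\sum_{2^j>N}\Vert\Delta_ju\Vert_{L^\infty}\lesssim\sum_{2^j>N}2^j\Vert\Delta_ju\Vert_{L^2}\lesssim N^{-1}\Vert\Delta u\Vert_{?}$ — this needs $H^2$, which we don't have, so instead I would not try to make the tail small pointwise but rather feed it back: actually the cleanest route for an $H^1$-only statement is to not pass through $L^\infty$ of the full $u$, but to estimate $\Vert\sqrt\rho u\Vert_{L^4}$ directly. Here is the better plan: bound $\Vert\sqrt\rho u\Vert_{L^4}\le\Vert\sqrt\rho\,S_Nu\Vert_{L^4}+\Vert\sqrt\rho(u-S_Nu)\Vert_{L^4}$. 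For the first term use $\rho^{1/2}\le\tilde\rho^{1/2}$ pointwise together with the interpolation $\Vert S_Nu\Vert_{L^4}^2\le\Vert S_Nu\Vert_{L^\infty}\Vert S_Nu\Vert_{L^2}\lesssim(\log N)^{1/2}(\Vert u\Vert_{L^2}+\Vert\nabla u\Vert_{L^2})\Vert u\Vert_{L^2}$ — but this reintroduces $\Vert u\Vert_{L^2}$ without the weight. To keep the weight one instead writes $\Vert\sqrt\rho\,S_Nu\Vert_{L^4}^4\le\tilde\rho\Vert S_Nu\Vert_{L^\infty}^2\Vert\sqrt\rho\,S_Nu\Vert_{L^2}^2$ and controls $\Vert\sqrt\rho\,S_Nu\Vert_{L^2}\le\Vert\sqrt\rho u\Vert_{L^2}+\Vert\sqrt\rho(S_Nu-u)\Vert_{L^2}\le\Vert\sqrt\rho u\Vert_{L^2}+\tilde\rho^{1/2}N^{-1}\Vert\nabla u\Vert_{L^2}$. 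For the high-frequency term, $\Vert\sqrt\rho(u-S_Nu)\Vert_{L^4}\le\tilde\rho^{1/2}\Vert u-S_Nu\Vert_{L^4}\lesssim\tilde\rho^{1/2}\Vert u-S_Nu\Vert_{L^2}^{1/2}\Vert\nabla(u-S_Nu)\Vert_{L^2}^{1/2}\lesssim\tilde\rho^{1/2}(N^{-1}\Vert\nabla u\Vert_{L^2})^{1/2}\Vert\nabla u\Vert_{L^2}^{1/2}=\tilde\rho^{1/2}N^{-1/2}\Vert\nabla u\Vert_{L^2}$.

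Assembling these, one obtains for every $N\ge 2$
\[
  \Vert\sqrt\rho u\Vert_{L^4}^2\lesssim_{\tilde\rho}(\log N)^{1/2}(\Vert u\Vert_{L^2}+\Vert\nabla u\Vert_{L^2})\bigl(\Vert\sqrt\rho u\Vert_{L^2}+N^{-1}\Vert\nabla u\Vert_{L^2}\bigr)+N^{-1}\Vert\nabla u\Vert_{L^2}^2 ,
\]
and since $\Vert u\Vert_{L^2}\lesssim_\Omega\Vert\nabla u\Vert_{L^2}$ by Poincaré (here is where boundedness of $\Omega$ and $u\in H^1_0$ enter), the prefactor $\Vert u\Vert_{L^2}+\Vert\nabla u\Vert_{L^2}\lesssim\Vert\nabla u\Vert_{L^2}$. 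The final step is to optimize in $N$: choosing $N=2+\Vert\nabla u\Vert_{L^2}^2$ makes the $N^{-1}$ corrections of lower order (bounded by $\Vert\nabla u\Vert_{L^2}$ and by $1$ respectively, hence absorbable into $(1+\Vert\sqrt\rho u\Vert_{L^2})\Vert\nabla u\Vert_{L^2}\sqrt{\log(2+\Vert\nabla u\Vert_{L^2}^2)}$ since $\sqrt{\log(\cdot)}\ge$ a constant), and turns $(\log N)^{1/2}$ into exactly $\sqrt{\log(2+\Vert\nabla u\Vert_{L^2}^2)}$, yielding the claimed inequality. I expect the main obstacle to be bookkeeping: carefully tracking which norms carry the weight $\sqrt\rho$ versus which are unweighted (one must never replace $\Vert\sqrt\rho u\Vert_{L^2}$ by $\Vert u\Vert_{L^2}$, as that would destroy the usefulness of the estimate in the vacuum setting), and verifying that all the $N^{-1}$ remainder terms are genuinely dominated by the right-hand side after the choice of $N$ — in particular checking the regime $\Vert\nabla u\Vert_{L^2}$ small, where one relies on the $1+$ and the constant lower bound of the logarithm. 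An equivalent and perhaps slicker route avoids Littlewood–Paley entirely: apply the standard two-dimensional Gagliardo–Nirenberg–Brezis–Wainger inequality $\Vert v\Vert_{L^\infty(\mathbb{R}^2)}\le C\Vert v\Vert_{H^1}\sqrt{\log(2+\Vert v\Vert_{H^2}/\Vert v\Vert_{H^1})}$ to a mollification $v=u*\phi_\varepsilon$, use $\Vert v\Vert_{H^2}\lesssim\varepsilon^{-1}\Vert\nabla u\Vert_{L^2}$ and $\Vert u-v\Vert_{L^4}\lesssim\varepsilon^{1/2}\Vert\nabla u\Vert_{L^2}\cdots$, then feed into $\Vert\sqrt\rho u\Vert_{L^4}^4\le\tilde\rho\Vert\sqrt\rho u\Vert_{L^2}^2\Vert u\Vert_{L^\infty}^2$ after first peeling off the high-frequency $L^4$ remainder as above; optimizing $\varepsilon$ reproduces the same bound. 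Either way the structural point is the crude weight estimate $\rho^2\le\tilde\rho\rho$ combined with a quantitative, logarithmically-corrected failure of $H^1\hookrightarrow L^\infty$ in two dimensions.
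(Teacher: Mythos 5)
Your argument is correct. Note, however, that the paper does not actually prove Lemma \ref{L04}: it only states it, citing Desjardins and remarking that zero extension of $u\in H_0^1(\Omega)$ reduces matters to $\mathbb{R}^2$. So there is no in-paper proof to compare against; what you have written is essentially a reconstruction of Desjardins' original argument, and it holds up. The structural points are all in place: the crude weight bound $\rho^2\le\tilde\rho\,\rho$ to trade two powers of the weight for an $L^\infty$ (or low-frequency $L^\infty$) factor while keeping one copy of $\Vert\sqrt\rho u\Vert_{L^2}$; the Bernstein/Cauchy--Schwarz summation giving $\Vert S_Nu\Vert_{L^\infty}\lesssim \Vert u\Vert_{L^2}+(\log N)^{1/2}\Vert\nabla u\Vert_{L^2}$; the tail bounds $\Vert u-S_Nu\Vert_{L^2}\lesssim N^{-1}\Vert\nabla u\Vert_{L^2}$ and $\Vert u-S_Nu\Vert_{L^4}\lesssim N^{-1/2}\Vert\nabla u\Vert_{L^2}$; Poincar\'e to absorb $\Vert u\Vert_{L^2}$ (this is where $u\in H_0^1$ of the bounded domain enters, and why the constant depends on $\Omega$); and the choice $N=2+\Vert\nabla u\Vert_{L^2}^2$. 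Your check of the small-$\Vert\nabla u\Vert_{L^2}$ regime is the right thing to worry about and works, since $N^{-1}\Vert\nabla u\Vert_{L^2}^2\le\Vert\nabla u\Vert_{L^2}/\sqrt2$ is dominated by the ``$1$'' in the factor $1+\Vert\sqrt\rho u\Vert_{L^2}$ together with $\sqrt{\log 2}>0$. The only bookkeeping item worth making explicit is that after zero extension the weighted norms are still taken over $\Omega$ (or equivalently $\rho$ is extended by zero), so that $\Vert\sqrt\rho(S_Nu-u)\Vert_{L^2(\Omega)}\le\tilde\rho^{1/2}\Vert S_Nu-u\Vert_{L^2(\mathbb{R}^2)}$; this is harmless.
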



\section{A Priori Estimates}\label{S4}

In the following sections, we denote
\begin{itemize}
  \item $C$ is a positive constant depending only on $\tilde{\rho},\underline{\theta}, \Omega$ and $\Vert(u_0,\theta_0)\Vert_{H^2}$, but independent of the time $T$ and $\rho_0$.
  \item $C_0$ is a positive constant depending on $\tilde{\rho}, \underline{\theta}, \Omega$, $\Vert(u_0,\theta_0)\Vert_{H^2}, \Vert\rho_0\Vert_{W^{1,q}}, c_0$ and the domain $V$, but independent of  $T$.
\end{itemize}

\subsection{Bootstrap Argument}

In this subsection, we will establish some necessary a priori estimates of the strong
solution $(\rho, u, \theta, P)$ to the problem \eqref{1}-\eqref{4}. Thus, let $T > 0$ be a fixed time and $(\rho, u, \theta, P)$ be the smooth solution to \eqref{1}-\eqref{4} on $\Omega\times[0,T]$ with smooth initial data $(\rho_0, u_0, \theta_0)$ satisfying \eqref{49}-\eqref{79}. Therefore, we have the following key a priori estimates on $(\rho, u, \theta, P)$.

\begin{Proposition}\label{P0}
There exist two positive constants $M$ and $\epsilon_0\ll1$ all depending only on the initial data such that if $(\rho, u, \theta, P)$ is a smooth solution of \eqref{1}-\eqref{4} on $\Omega\times[0,T]$ satisfying
\begin{align}\label{60}
  \sup_{0\leq t\leq T}\Vert\theta\Vert_{H^2}^2
  +\int_0^T\big((1+t)\Vert\theta_t\Vert_{L^2}^2
   +(1+t^2)\Vert\nabla\theta_t\Vert_{L^2}^2\big)dt
  \leq 2M,
\end{align}
then, the following estimate holds
\begin{align}\label{65}
  \sup_{0\leq t\leq T}\Vert\theta\Vert_{H^2}^2
  +\int_0^T\big((1+t)\Vert\theta_t\Vert_{L^2}^2
   +(1+t^2)\Vert\nabla\theta_t\Vert_{L^2}^2\big)dt
  \leq M,
\end{align}
provided $\alpha\leq\epsilon_0$.
\end{Proposition}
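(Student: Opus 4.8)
\textbf{Proof strategy for Proposition \ref{P0}.}

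The plan is to run a closed bootstrap/continuity argument: assume \eqref{60} holds on $[0,T]$ and improve the constant from $2M$ to $M$, using the smallness of $\alpha$ to absorb all the dangerous nonlinear terms. The first block of estimates I would establish are the ones that do \emph{not} depend on the temperature bound at all, or depend on it only through $\underline{\theta}\le\theta$: namely the basic energy identity (multiply $\eqref{1}_2$ by $u$, $\eqref{1}_3$ by $1$, and use $\dive u=0$ to get conservation of total energy $E(t)=E_0$ and the dissipation $\int_0^T\int\mu|D(u)|^2$), the maximum principle / positivity of $\theta$ (so $\theta\ge\underline{\theta}$ propagates, giving $\mu(\theta)=\theta^{\alpha}\ge\underline{\theta}^{\alpha}$ and $\kappa(\theta)=\theta^{\beta}\ge\underline{\theta}^{\beta}$ as lower bounds, and $\rho\le\tilde\rho$ from the transport equation), and the first-order velocity estimate $\sup_t\|\sqrt{\rho}u\|_{L^2}^2+\int_0^T\|\nabla u\|_{L^2}^2$ plus the time-weighted version with $\|\sqrt{\rho}u_t\|_{L^2}$. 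For these I would test $\eqref{1}_2$ with $u_t$, use Lemma \ref{L03}(1) for the Stokes regularity of $u$ in terms of $F=-\rho u_t-\rho u\cdot\nabla u$, and crucially note that $\|\nabla\mu(\theta)\|_{L^k}=\alpha\|\theta^{\alpha-1}\nabla\theta\|_{L^k}$ carries a factor $\alpha$, so every term produced by the variable-viscosity structure is $O(\alpha)$ times something controlled by $2M$ and the energy — hence absorbable for $\epsilon_0$ small.

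The heart of the matter is the temperature estimate itself, i.e.\ closing \eqref{65}. Here I would test $\eqref{1}_3$ with $\theta_t$ and with $\theta-\overline{\rho_0}^{-1}|\Omega|^{-1}E_0$ (the latter to exploit Poincaré via Lemma \ref{L08}, since $\rho$ has the right lower-mass property $\int\rho\,dx=\int\rho_0\,dx$ and $\rho\le\tilde\rho$), producing on the left $\|\sqrt{\rho}\theta_t\|_{L^2}^2$ and $\frac{d}{dt}\int\kappa(\theta)|\nabla\theta|^2$ and on the right the viscous heating term $\int 2\mu|D(u)|^2\theta_t$, the convection term $\int\rho u\cdot\nabla\theta\,\theta_t$, and commutator terms from differentiating $\kappa(\theta)$. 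The viscous heating is the worst: $\|D(u)\|_{L^4}^2$ must be controlled, and this is exactly where Lemma \ref{L04} (the Desjardins log-inequality) and Lemma \ref{L03} enter — $\|D(u)\|_{L^4}^2\lesssim\|\nabla u\|_{L^2}\|u\|_{H^2}$, and $\|u\|_{H^2}$ via Stokes is bounded by $\|\sqrt{\rho}u_t\|_{L^2}+\|\sqrt{\rho}u\cdot\nabla u\|_{L^2}$ up to $\alpha$-corrections. Then I would do the $\nabla^2\theta$ estimate using Lemma \ref{L06} (to pass from $\Delta\theta$ to $\nabla^2\theta$, given the Neumann condition) applied to the elliptic equation $\kappa(\theta)\Delta\theta=\rho\theta_t+\rho u\cdot\nabla\theta-2\mu|D(u)|^2-\kappa'(\theta)|\nabla\theta|^2$, and finally differentiate $\eqref{1}_3$ in $t$, test with $\theta_t$, and integrate against the weights $(1+t)$ and $(1+t^2)$ to get the $\int_0^T(1+t)\|\theta_t\|_{L^2}^2+(1+t^2)\|\nabla\theta_t\|_{L^2}^2$ piece; the time weights are chosen so that Grönwall (Lemma \ref{L01}) with the already-established exponential decay of $\|\nabla u\|_{L^2}$ closes without loss.

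\textbf{The main obstacle.} The genuinely delicate point is the behavior of $\theta$ on (or near) the vacuum region, because there $\rho\theta_t$ gives no control and the coupling to $u$ is strongest — the term $\int\rho u\cdot\nabla\theta\,\theta_t$ and the viscous heating both feed back into $\theta$ with no density weight to save us. This is precisely where assumption \eqref{79} on $|V|$ is used: following the idea attributed to \cite{Cao}, I would split $\int_\Omega = \int_{\{\rho_0\le c_0\}} + \int_{\{\rho_0> c_0\}}$ as in the referenced inequality \eqref{82}, bound the vacuum part by $|V|$ times a high power of $\|\theta\|_{H^2}\le(2M)^{1/2}$ (using Sobolev embedding $H^2\hookrightarrow L^\infty$ in 2D), and choose $c_0$ and $|V|$ small enough — quantitatively $|V|\le\exp(-1/c_0^2)$, matching \eqref{79} and its refinement \eqref{83} — that this contribution is absorbed. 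The non-degeneracy that makes $\mu$-related terms small is $\alpha\le\epsilon_0$; the non-degeneracy that makes the vacuum terms small is $|V|\le\exp(-1/c_0^2)$; combining both, every term on the right-hand side of the master differential inequality is either $\le$ (integrable, $T$-independent) $\times$ (current energy) or $\le\frac12$ (the quantity being estimated), so Grönwall yields the bound $M$ rather than $2M$, closing the bootstrap.
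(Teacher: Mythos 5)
Your outline follows the paper's proof in all essentials: the bootstrap from $2M$ to $M$, the lower bounds $\theta\ge\underline\theta$ and $0\le\rho\le\tilde\rho$, the Stokes regularity of Lemma \ref{L03} with $F=-\rho u_t-\rho u\cdot\nabla u$ and the observation that $\nabla\mu(\theta)$ carries a factor $\alpha$, the Desjardins inequality for $\|\sqrt{\rho}u\|_{L^4}$, the elliptic estimate via Lemma \ref{L06}, the $t$-differentiated temperature equation tested with $\theta_t$, and above all the splitting of $\|\theta_t\|_{L^2}^2$ over $\{\rho\le c_0\}$ and $\{\rho>c_0\}$ with the measure condition \eqref{79} absorbing the vacuum contribution. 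Two points, however, are not right as stated.

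First, the closing of the first-order velocity estimate. After Desjardins' inequality the differential inequality \eqref{11} for $y(t)=(1+t^2)\|\nabla u\|_{L^2}^2+\int_0^t(1+s^2)\|\sqrt{\rho}u_t\|_{L^2}^2\,ds$ takes the form $y(t)\le C+C\int_0^t h(s)\,y(s)\log(2+y(s))\,ds$ with $h=\alpha\|\theta_t\|_{L^2}^2+\|\nabla u\|_{L^2}^2\in L^1(0,T)$. The linear Gr\"onwall inequality (Lemma \ref{L01}) you invoke does not apply to this superlinear right-hand side; the paper closes it with the Bihari--LaSalle inequality (Lemma \ref{LA1}) with $w(y)=y\log(2+y)$, for which $\int^\infty dy/w(y)=\infty$ guarantees a global bound. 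Moreover, your appeal to the ``already-established exponential decay of $\|\nabla u\|_{L^2}$'' is circular at this stage: within the proof of Proposition \ref{P0} no decay is available --- the exponential rates are derived only afterwards, in Lemmas \ref{L6}--\ref{L7}, once the bootstrap has been closed with purely polynomial weights $(1+t)$, $(1+t^2)$. Second, a smaller issue: the multiplier $\theta-\frac{1}{\overline{\rho_0}|\Omega|}E_0$ belongs to the later decay analysis, not to the bootstrap; to handle the degenerate conductivity $\kappa=\theta^{\beta}$ the paper instead multiplies $\eqref{1}_3$ by $\theta^{\beta+1}$ and by $\kappa(\theta)\theta_t$, so that the dissipative quantity is $\|\nabla\theta^{\beta+1}\|_{L^2}$ and the elliptic problem \eqref{69} is posed for $\theta^{\beta+1}$; this is what makes the passage to $\|\nabla^2\theta\|_{L^2}$ in \eqref{58} work for arbitrary $\beta\ge0$.
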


Before proving Proposition \ref{P0}, we establish some necessary a priori estimates, see Lemmas \ref{L1}-\ref{L5}.

\begin{lemma}\label{L1}
Under the assumptions of Proposition \ref{P0}, $\forall \, (x,t)\in\Omega\times[0,T]$, it holds that
\begin{gather}
  0\leq \rho(x,t)\leq \tilde{\rho},\label{7}\\
  \theta(x,t)\geq \underline{\theta}.\label{8}
\end{gather}
\end{lemma}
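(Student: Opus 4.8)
The plan is to prove both bounds by elementary transport / maximum-principle arguments; neither of them really uses the bootstrap hypothesis \eqref{60} beyond the regularity already built into the notion of a smooth solution. For the density, I would use that $\dive u=0$ turns $\eqref{1}_1$ into the transport equation $\rho_t+u\cdot\nabla\rho=0$. Multiplying by $p\rho^{p-1}$ for $p\in[1,\infty)$, integrating over $\Omega$, and using $u|_{\partial\Omega}=0$ gives $\frac{d}{dt}\int\rho^p\,dx=\int(\dive u)\rho^p\,dx=0$, hence $\|\rho(\cdot,t)\|_{L^p}=\|\rho_0\|_{L^p}$ for every $p$; sending $p\to\infty$ yields $\|\rho(\cdot,t)\|_{L^\infty}=\tilde\rho$. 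Nonnegativity is preserved since $\rho$ is constant along the particle trajectories of the smooth field $u$, which, being tangent to $\partial\Omega$, carry $\overline\Omega$ onto itself. This is \eqref{7}, and it is the routine part.

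For \eqref{8} I would run a weighted energy estimate on the negative part of $\theta-\underline{\theta}$. Set $\chi:=(\theta-\underline{\theta})_-=\max(\underline{\theta}-\theta,0)\ge 0$, so that $\chi|_{t=0}\equiv 0$ by \eqref{49}. Using $\eqref{1}_1$ and $\eqref{1}_4$ to rewrite $\eqref{1}_3$ (with $c_v=1$) as $\rho\theta_t+\rho u\cdot\nabla\theta=\dive(\kappa\nabla\theta)+2\mu|D(u)|^2$, and noting the same equation holds with $\theta$ replaced by $\theta-\underline{\theta}$, I would test against $\min(\theta-\underline{\theta},0)=-\chi$, integrate by parts in the conduction term (the boundary integral vanishing by the Neumann condition \eqref{3}), and use $\rho_t+\dive(\rho u)=0$ together with $u|_{\partial\Omega}=0$ to reassemble the time and convection terms. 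The outcome should be
\[
\tfrac12\frac{d}{dt}\int\rho\chi^2\,dx+\int\kappa|\nabla\chi|^2\,dx+\int 2\mu|D(u)|^2\chi\,dx=0 .
\]
All three terms on the left are nonnegative, so $t\mapsto\int\rho\chi^2\,dx$ is nonincreasing; vanishing at $t=0$, it vanishes for all $t$, whence $\chi=0$ a.e. on $\{\rho>0\}$.

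The delicate point will be upgrading this to $\chi\equiv 0$ throughout $\Omega$, since the $\rho$-weight carries no information on the vacuum set. I would exploit that $\int\rho\chi^2\,dx\equiv 0$ has vanishing derivative, so the identity above forces $\int\kappa|\nabla\chi|^2\,dx=0$ for a.e.\ $t$; since $\kappa=\theta^\beta$ and $\nabla\chi=-\mathbf{1}_{\{0<\theta<\underline{\theta}\}}\nabla\theta$ a.e., this yields $\nabla\theta=0$ a.e.\ on $\{0<\theta<\underline{\theta}\}$, and hence $\nabla\chi=0$ a.e.\ in $\Omega$. As $\Omega$ is connected, $\chi(\cdot,t)$ is then a constant, and that constant must be $0$ because $\chi$ vanishes on $\{\rho>0\}$, a set of positive measure (the mass $\int_\Omega\rho\,dx=\int_\Omega\rho_0\,dx$ is conserved and positive, e.g.\ by \eqref{79}). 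Thus $\theta\ge\underline{\theta}$ a.e., which is \eqref{8}. One last remark worth making is that the coercivity $\kappa\ge\underline{\theta}^{\beta}$ used in later sections relies on \eqref{8}, but the step just sketched needs only that $\theta^{\beta}|\nabla\theta|^2$ has zero integral, which holds unconditionally, so there is no circularity.
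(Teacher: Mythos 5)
Your argument takes a genuinely different route from the paper's: the paper disposes of \eqref{7} by citation and of \eqref{8} by invoking the standard maximum principle for $\eqref{1}_3$ (\cite{Feireisl}, p.~43), whereas you replace the maximum principle by a Stampacchia-type truncation energy estimate. The density part is fine and is essentially what the cited references do. The energy identity for $\chi=(\theta-\underline{\theta})_-$ is also correctly assembled (the renormalization of the material derivative, the vanishing of the Neumann boundary term, and the favourable sign of $\int 2\mu|D(u)|^2\chi\,dx$ are all in order), and your observation that $\int\rho\chi^2\,dx\equiv0$ forces $\int\kappa|\nabla\chi|^2\,dx=0$ for a.e.\ $t$, hence $\chi\equiv0$ by connectedness of $\Omega$ and positivity of the conserved mass, is the right way to reach the vacuum set, which the weighted quantity $\int\rho\chi^2\,dx$ alone cannot see.

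There is, however, a genuine circularity you have not closed. Both the coercivity of the conduction term (you need $\kappa=\theta^\beta\ge0$ to assert that $\int\kappa|\nabla\chi|^2\,dx$ is nonnegative) and the final deduction $\int\kappa|\nabla\chi|^2\,dx=0\Rightarrow\nabla\chi=0$ a.e.\ (you need $\kappa>0$ on $\{\chi>0\}=\{\theta<\underline{\theta}\}$) presuppose that $\theta>0$ there: $\theta^\beta$ vanishes at $\theta=0$ and is not even defined for $\theta<0$ and non-integer $\beta$, and your formula $\nabla\chi=-\mathbf{1}_{\{0<\theta<\underline{\theta}\}}\nabla\theta$ silently discards the set $\{\theta\le0\}$. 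Since positivity of $\theta$ is precisely a weak form of the conclusion \eqref{8}, the argument as written assumes part of what it proves; your closing remark about ``no circularity'' addresses the irrelevant issue of the lower bound $\kappa\ge\underline{\theta}^\beta$, not the relevant one of $\kappa$ possibly degenerating or changing sign. The gap is repairable by a continuity-in-time bootstrap: $\theta\in C([0,T];H^2)\hookrightarrow C([0,T]\times\overline{\Omega})$ in two dimensions and $\theta(\cdot,0)\ge\underline{\theta}>0$, so the set of times $t$ such that $\min_{\overline{\Omega}}\theta(\cdot,s)>0$ for all $s\le t$ is a relatively open interval containing $0$; on it your argument applies verbatim and yields $\theta\ge\underline{\theta}$, which shows this interval is also closed and hence equals $[0,T]$. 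With that step added your proof is complete; without it the key step does not go through.
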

\begin{proof}
The proof of \eqref{7} is given by \cite{Feireisl,Guo,Lions}.
Then, applying standard maximum principle (\cite{Feireisl}, p.43) to $\eqref{1}_3$ along with $\theta_0\geq \underline{\theta}$ shows \eqref{8}.
\end{proof}

\begin{lemma}\label{L2}
Under the assumptions of Proposition \ref{P0}, it holds that
\begin{gather}\label{9}
  \sup_{0\leq t\leq T}\big((1+t^2)\Vert u\Vert_{H^1}^2\big)
  +\int_0^T(1+t^2)(\Vert\nabla u\Vert_{H^1}^2+\Vert\sqrt{\rho} u_t\Vert_{L^2}^2)dt
  \leq C.
\end{gather}
\end{lemma}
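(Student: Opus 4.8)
\textbf{Proof proposal for Lemma \ref{L2}.}

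The plan is to derive the weighted $H^1$-estimate for the velocity by testing the momentum equation against $u$ and against $u_t$, then closing the estimate with the time weight $(1+t^2)$ using a Grönwall-type argument. First I would establish the basic energy identity: multiplying $\eqref{1}_2$ by $u$ and integrating by parts gives
\begin{align*}
  \frac{1}{2}\frac{d}{dt}\int\rho|u|^2dx+\int2\mu(\theta)|D(u)|^2dx=0,
\end{align*}
where the pressure term drops out by $\dive u=0$ and the $u=0$ boundary condition, and the convective term vanishes after using $\eqref{1}_1$. Since $\mu(\theta)=\theta^{\alpha}\geq\underline{\theta}^{\alpha}$ by Lemma \ref{L1}, and $u\in H_0^1$, the Poincaré inequality (Lemma \ref{L05} applied to $u$, or the standard one) yields $\int2\mu(\theta)|D(u)|^2dx\geq c\underline{\theta}^{\alpha}\Vert\nabla u\Vert_{L^2}^2\geq\sigma_1\int\rho|u|^2dx$ (here Korn's inequality for $H_0^1$ vector fields plus Poincaré), which already gives exponential decay of $\Vert\sqrt{\rho}u\Vert_{L^2}^2$ and an integrated bound on $\int_0^T\Vert\nabla u\Vert_{L^2}^2dt$.

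Next I would perform the first-order estimate: multiply $\eqref{1}_2$ by $u_t$ and integrate. This produces $\int\rho|u_t|^2dx$, the good term $\frac{d}{dt}\int\mu(\theta)|D(u)|^2dx$ up to a remainder $\int\mu'(\theta)\theta_t|D(u)|^2dx=\alpha\int\theta^{\alpha-1}\theta_t|D(u)|^2dx$, and the convective term $\int\rho u\cdot\nabla u\cdot u_t\,dx$. The convective term is controlled by $\Vert\sqrt{\rho}u_t\Vert_{L^2}\Vert\sqrt{\rho}u\Vert_{L^4}\Vert\nabla u\Vert_{L^4}$, where $\Vert\sqrt{\rho}u\Vert_{L^4}$ is handled by the Desjardins inequality (Lemma \ref{L04}) and $\Vert\nabla u\Vert_{L^4}$ by Gagliardo-Nirenberg (Lemma \ref{L02}) together with the Stokes estimate of Lemma \ref{L03} (with $F=-\rho u_t-\rho u\cdot\nabla u$), absorbing the top-order $\Vert\nabla^2 u\Vert_{L^2}$ into the left side. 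The remainder term involving $\theta_t$ is where the a priori bound \eqref{60} enters: $\alpha\int\theta^{\alpha-1}\theta_t|D(u)|^2dx$ is bounded using $\theta\geq\underline{\theta}$, $\Vert\theta\Vert_{L^\infty}\leq C\Vert\theta\Vert_{H^2}\leq CM^{1/2}$, and $\Vert\theta_t\Vert_{L^2}$, which is integrable in time (with weight) by \eqref{60}. Combining, I get a differential inequality of the form $\frac{d}{dt}A(t)+\int\rho|u_t|^2dx\leq C(t)A(t)$ with $\int_0^T C(t)dt\leq C$, where $A(t)\sim\Vert\nabla u\Vert_{L^2}^2$.

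Finally I would multiply the resulting inequality by the time weight $(1+t^2)$ and absorb the extra term $2t\Vert\nabla u\Vert_{L^2}^2$ coming from $\frac{d}{dt}[(1+t^2)A]$ using the integrated first-order bound $\int_0^T\Vert\nabla u\Vert_{L^2}^2dt\leq C$ obtained from the basic energy estimate; the compatibility condition \eqref{62} ensures $A(0)=\Vert\nabla u_0\Vert_{L^2}^2$ and $\Vert\sqrt{\rho_0}u_t(0)\Vert_{L^2}\leq\Vert g_1\Vert_{L^2}$ are finite, so the initial data for the weighted quantity is controlled by $\Vert u_0\Vert_{H^2}$. Then Grönwall (Lemma \ref{L01}) closes the estimate to give \eqref{9}. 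The main obstacle I anticipate is the careful interplay in the $u_t$-estimate between the Stokes regularity bound (which carries a factor $1+\Vert\nabla\mu(\theta)\Vert_{L^k}^{\cdot}=1+\Vert\alpha\theta^{\alpha-1}\nabla\theta\Vert_{L^k}^{\cdot}$, needing the smallness of $\alpha$ and control of $\nabla\theta$ in $L^k$ via $\Vert\theta\Vert_{H^2}$) and the absorption of $\Vert\nabla^2 u\Vert_{L^2}$; one must keep the $\alpha$-dependence explicit so that choosing $\epsilon_0$ small makes the degenerate-coefficient contributions harmless, and simultaneously make sure every constant depends only on the quantities allowed by the statement (on $\tilde\rho,\underline\theta,\Omega,\Vert(u_0,\theta_0)\Vert_{H^2}$, not on $T$).
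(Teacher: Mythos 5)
Your outline follows the paper's route almost exactly (basic energy identity, time weights $t$ and $t^2$, the $u_t$-test combined with the Stokes estimate of Lemma~\ref{L03} and the Desjardins inequality of Lemma~\ref{L04}, smallness of $\alpha$ to neutralize $\nabla\mu(\theta)$), but the closing step has a genuine gap: the differential inequality you arrive at is \emph{not} of the linear form $\frac{d}{dt}A+\Vert\sqrt{\rho}u_t\Vert_{L^2}^2\leq C(t)A$ with $\int_0^T C(t)\,dt\leq C$, so Gr\"onwall (Lemma~\ref{L01}) does not close it. Concretely, after Young's inequality the convective term contributes $C\Vert\sqrt{\rho}u\Vert_{L^4}^4\Vert\nabla u\Vert_{L^2}^2$, and the Desjardins inequality gives
\begin{align*}
  \Vert\sqrt{\rho}u\Vert_{L^4}^4
  \leq C\Vert\nabla u\Vert_{L^2}^2\log\big(2+\Vert\nabla u\Vert_{L^2}^2\big),
\end{align*}
so the right-hand side is $C\Vert\nabla u\Vert_{L^2}^2\cdot A\log(2+A)$ with $A=\Vert\nabla u\Vert_{L^2}^2$. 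The putative coefficient $C(t)=C\Vert\sqrt{\rho}u\Vert_{L^4}^4$ contains $\log(2+\Vert\nabla u\Vert_{L^2}^2)$, i.e.\ precisely the supremum you are trying to bound, so its time-integrability is not available a priori; and because of the vacuum you cannot replace $\Vert\sqrt{\rho}u\Vert_{L^4}$ by $\Vert u\Vert_{L^4}\leq C\Vert u\Vert_{L^2}^{1/2}\Vert\nabla u\Vert_{L^2}^{1/2}$ either, since $\Vert u\Vert_{L^2}$ is not controlled by $\Vert\sqrt{\rho}u\Vert_{L^2}$ and the resulting nonlinearity would be quadratic in $A$, yielding only a local-in-time bound.

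The missing ingredient is the Bihari--LaSalle inequality (Lemma~\ref{LA1}) applied with $h(t)=\alpha\Vert\theta_t\Vert_{L^2}^2+\Vert\nabla u\Vert_{L^2}^2$ and $w(y)=y\log(2+y)$: since $\int^{\infty}\frac{dy}{y\log(2+y)}=\infty$, the function $G$ is a bijection onto a half-line and \eqref{81} gives a global bound depending only on the data. This logarithmic Gr\"onwall step is exactly why Lemmas~\ref{LA1} and~\ref{L04} are placed in the preliminaries; with it, the rest of your argument (including the $t$- and $t^2$-weighted energy bounds, which you could equally obtain via your exponential-decay observation) goes through. A minor further remark: for this lemma you do not need $\Vert\sqrt{\rho_0}u_{0t}\Vert_{L^2}$ from the compatibility condition --- only $\Vert\nabla u_0\Vert_{L^2}$ enters the initial value of the tracked quantity; the compatibility condition is used later, in Lemma~\ref{L3}.
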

\begin{proof}
\underline{step 1}.
Multiplying $\eqref{1}_2$ by $2u$, then integrating the resulting identity over $\Omega$ yields
\begin{equation}\label{10}
  \frac{d}{dt}\int\rho|u|^2dx+\int4\mu(\theta)|D(u)|^2dx=0.
\end{equation}
And integrating \eqref{10}  with respect to $t$, we have
\begin{equation}\label{12}
  \Vert\sqrt{\rho}u\Vert_{L^2}^2
  +4\int_0^t\int\mu(\theta)|D(u)|^2dxds
  =\Vert\sqrt{\rho_0}u_0\Vert_{L^2}^2.
\end{equation}
Then, Multiplying \eqref{10} by $t$ and integrating it over $(0,t)$, we obtain from Poincar\'{e}'s inequality, Lemma \ref{L1} and \eqref{12} that
\begin{equation}\label{13}
  t\Vert\sqrt{\rho}u\Vert_{L^2}^2
  +\int_0^ts\int4\mu(\theta)|D(u)|^2dxds
  \leq\int_0^t\Vert\sqrt{\rho}u\Vert_{L^2}^2ds
  \leq C\int_0^t\Vert\nabla u\Vert_{L^2}^2ds
  \leq C.
\end{equation}
In a similar manner, one can get that
\begin{equation}\label{14}
  t^2\Vert\sqrt{\rho}u\Vert_{L^2}^2
  +\int_0^ts^2\int4\mu(\theta)|D(u)|^2dxds
  \leq C.
\end{equation}

\underline{step 2}.
Note that Lemma \ref{L1} and \eqref{60} tells us that
\begin{align*}
  C^{-1}\leq\underline{\theta}^{\alpha}\leq\mu(\theta)
  \leq\Vert\theta\Vert_{L^{\infty}}^{\alpha}
  \leq\Vert\theta\Vert_{H^2}^{\alpha}
  \leq(2M)^{\frac{\alpha}{2}}\leq C,
\end{align*}
provided
\begin{align}\label{17}
  \alpha\leq\min\{1, M^{-1}\}.
\end{align}
Recall that $(u,P)$ satisfies the following Stokes system:
\begin{align}\label{70}
\left\{\begin{array}{l}
  -\dive\big(2\mu(\theta)D(u)\big)+\nabla P=-\rho u_t-\rho u\cdot\nabla u,  ~~ \text{in}~\Omega,\\
  \dive u=0,  ~~~~~~~~~~~~~~~~~~~~~~~~~~~~~~~~~~~~~~~~~~~~~~ \, \text{in}~\Omega,\\
  u=0,  ~~~~~~~~~~~~~~~~~~~~~~~~~~~~~~~~~~~~~~~~~~~~~~~~~~~ \text{on}~\partial\Omega.
\end{array}\right.
\end{align}
Applying Lemma \ref{L03} with $F=-\rho u_t-\rho u\cdot\nabla u$, then using Gagliardo-Nirenberg inequality (Lemma \ref{L02}), \eqref{60} and \eqref{17}, we arrive at
\begin{align}\label{44}
    &\Vert u\Vert_{H^2}+\Vert \frac{P}{\mu(\theta)}\Vert_{H^1}\nonumber\\
    &\leq C\big(\Vert \rho u_t\Vert_{L^2}+\Vert\rho u\cdot\nabla u\Vert_{L^2}\big)
    \big(1+\Vert\alpha\theta^{\alpha-1}\nabla\theta\Vert_{L^k}^{\frac{k}{k-2}}\big)\nonumber\\
    &\leq C\big(\Vert\sqrt{\rho} u_t\Vert_{L^2}+\Vert\rho u\Vert_{L^4}\Vert\nabla u\Vert_{L^4}\big)
    \big(1+(\alpha M)^{\frac{k}{k-2}}\big)\nonumber\\
    &\leq C\big(\Vert\sqrt{\rho} u_t\Vert_{L^2}
    +\Vert\sqrt{\rho} u\Vert_{L^4}\Vert\nabla u\Vert_{L^2}^{\frac{1}{2}}
    \Vert\nabla u\Vert_{H^1}^{\frac{1}{2}}\big).
\end{align}
It follows from \eqref{44} that
\begin{align}\label{15}
    \Vert u\Vert_{H^2}+\Vert \frac{P}{\mu(\theta)}\Vert_{H^1}
    \leq C\big(\Vert\sqrt{\rho} u_t\Vert_{L^2}
    +\Vert\sqrt{\rho} u\Vert_{L^4}^2
     \Vert\nabla u\Vert_{L^2}\big).
\end{align}

\underline{step 3}.
Multiplying $\eqref{1}_2$ by $u_t$ and integrating it over $\Omega$, we deduce that
\begin{align}\label{11}
  &\frac{d}{dt}\int\mu(\theta)|D(u)|^2dx
   +\Vert\sqrt{\rho}u_t\Vert_{L^2}^2\nonumber\\
  &=\int\alpha\theta^{\alpha-1}\theta_t|D(u)|^2dx
   -\int\rho(u\cdot\nabla u)\cdot u_t dx\nonumber\\
  &\leq C\alpha\Vert\theta_t\Vert_{L^2}\Vert\nabla u\Vert_{L^4}^2
   +C\Vert\sqrt{\rho}u_t\Vert_{L^2}\Vert\sqrt{\rho} u\Vert_{L^4}\Vert\nabla u\Vert_{L^4}\nonumber\\
  &\leq C\alpha\Vert\theta_t\Vert_{L^2}\Vert\nabla u\Vert_{L^2}\Vert\nabla u\Vert_{H^1}
   +C\Vert\sqrt{\rho}u_t\Vert_{L^2}\Vert\sqrt{\rho} u\Vert_{L^4}
   \Vert\nabla u\Vert_{L^2}^{\frac{1}{2}}\Vert\nabla u\Vert_{H^1}^{\frac{1}{2}}\nonumber\\
  &\leq\frac{1}{2}\Vert\sqrt{\rho}u_t\Vert_{L^2}^2
   +C\alpha\Vert\theta_t\Vert_{L^2}^2\Vert\nabla u\Vert_{L^2}^2
   +C\alpha\Vert\theta_t\Vert_{L^2}\Vert\nabla u\Vert_{L^2}^2
    \Vert\sqrt{\rho} u\Vert_{L^4}^2
   +C\Vert\sqrt{\rho} u\Vert_{L^4}^4\Vert\nabla u\Vert_{L^2}^2\nonumber\\
  &\leq\frac{1}{2}\Vert\sqrt{\rho}u_t\Vert_{L^2}^2
   +C(\alpha\Vert\theta_t\Vert_{L^2}^2+\Vert\sqrt{\rho} u\Vert_{L^4}^4)
    \Vert\nabla u\Vert_{L^2}^2\nonumber\\
  &\leq\frac{1}{2}\Vert\sqrt{\rho}u_t\Vert_{L^2}^2
   +C(\alpha\Vert\theta_t\Vert_{L^2}^2+\Vert\nabla u\Vert_{L^2}^2)
    \Vert\nabla u\Vert_{L^2}^2\log(2+\Vert\nabla u\Vert_{L^2}^2)
\end{align}
where we have used \eqref{15}, \eqref{12} and Lemma \ref{L04}.
Then,  multiplying \eqref{11} by $1+t^2$, integrating the result with respect to $t$, we know that
\begin{align}\label{61}
  &(1+t^2)\Vert\nabla u\Vert_{L^2}^2
   +\int_0^t(1+s^2)\Vert\sqrt{\rho}u_t\Vert_{L^2}^2ds\nonumber\\
  &\leq C\Vert u_0\Vert_{H^1}^2
   +C\int_0^t(\alpha\Vert\theta_t\Vert_{L^2}^2+\Vert\nabla u\Vert_{L^2}^2)
    (1+s^2)\Vert\nabla u\Vert_{L^2}^2\log(2+\Vert\nabla u\Vert_{L^2}^2)ds.
\end{align}
Applying Bihari-LaSalle inequality (Lemma \ref{LA1}) with
\begin{align*}
  &y(t)=(1+t^2)\Vert\nabla u\Vert_{L^2}^2
   +\int_0^t(1+s^2)\Vert\sqrt{\rho}u_t\Vert_{L^2}^2ds,\\
  &h(t)=\alpha\Vert\theta_t\Vert_{L^2}^2+\Vert\nabla u\Vert_{L^2}^2,
  \ \ \ w(y)=y\log(2+y),
\end{align*}
one has
\begin{align}\label{52}
  (1+t^2)\Vert\nabla u\Vert_{L^2}^2
   +\int_0^t(1+s^2)\Vert\sqrt{\rho}u_t\Vert_{L^2}^2ds\leq C.
\end{align}
Therefore, assertion \eqref{9} follows now from \eqref{12},\eqref{14},\eqref{15} and \eqref{52}.
\end{proof}

\begin{lemma}\label{L3}
Under the assumptions of Proposition \ref{P0}, it holds that
\begin{gather}
  \sup_{0\leq t\leq T}\big((1+t^2)\Vert(\nabla^2u,\sqrt{\rho}u_t)\Vert_{L^2}^2\big)
   +\int_0^T(1+t^2)\Vert\nabla u_t\Vert_{L^2}^2dt
  \leq C, \label{80}\\
  \sup_{0\leq t\leq T}\Vert\rho\Vert_{W^{1,q}}
   \leq C\Vert\rho_0\Vert_{W^{1,q}}.
\end{gather}
\end{lemma}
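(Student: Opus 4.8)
The plan is to derive a differential inequality for $\Vert\sqrt{\rho}u_t\Vert_{L^2}^2$, close it by a time-weighted Gr\"{o}nwall argument, and then recover the second-order bound on $u$ from the Stokes estimate \eqref{15} and the $W^{1,q}$-bound on $\rho$ from the transport equation. First I would differentiate $\eqref{1}_2$ in $t$, multiply by $u_t$, integrate over $\Omega$, and use $\rho_t=-\dive(\rho u)=-u\cdot\nabla\rho$ together with integration by parts (here $u_t=0$ on $\partial\Omega$ and $\dive u_t=0$, so $\nabla P_t$ drops out) to obtain an identity of the schematic form
\begin{align*}
  \frac12\frac{d}{dt}\Vert\sqrt{\rho}u_t\Vert_{L^2}^2+\int 2\mu(\theta)|D(u_t)|^2\,dx
  =-\int 2\alpha\theta^{\alpha-1}\theta_t\,D(u){:}D(u_t)\,dx+\mathcal{R},
\end{align*}
where $\mathcal{R}$ collects convective remainders of the types $\int\rho|u|\,|u_t|\,|\nabla u_t|$, $\int\rho|u_t|^2|\nabla u|$, $\int\rho|u|^2|\nabla u|\,|\nabla u_t|$ and $\int\rho|u|\,|\nabla u|^2|u_t|$. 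Since $\mu(\theta)\ge\underline{\theta}^{\alpha}\ge C^{-1}$ by Lemma \ref{L1}, Korn's inequality makes the left side dominate $c\Vert\nabla u_t\Vert_{L^2}^2$. Bounding each term in $\mathcal{R}$ and the viscosity term by H\"{o}lder's inequality, Gagliardo--Nirenberg (Lemma \ref{L02}), the Desjardins inequality (Lemma \ref{L04}) applied to $\sqrt{\rho}u$ and $\sqrt{\rho}u_t$, the Stokes bound \eqref{15} for $\Vert u\Vert_{H^2}$, and Young's inequality --- absorbing every $\Vert\nabla u_t\Vert_{L^2}^2$ into the left side and exploiting the smallness of $\alpha$ for the $\theta_t$-term --- then yields
\begin{align*}
  \frac{d}{dt}\Vert\sqrt{\rho}u_t\Vert_{L^2}^2+c\Vert\nabla u_t\Vert_{L^2}^2
  \le g(t)\Vert\sqrt{\rho}u_t\Vert_{L^2}^2+f(t),
\end{align*}
with $\int_0^T g\,dt\le C$ and $\int_0^T(1+t^2)f\,dt\le C$, the latter relying on the decay $(1+t^2)\Vert\nabla u\Vert_{L^2}^2\le C$ and $\Vert\sqrt{\rho}u\Vert_{L^2}\le C$ from Lemma \ref{L2} (to absorb the $(1+t^2)$ weights) together with $\int_0^T(1+t)\Vert\theta_t\Vert_{L^2}^2\,dt\le 2M$ from \eqref{60}.

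Next I would multiply the last inequality by $(1+t^2)$ and integrate over $(0,T)$; the surplus term $\int_0^T 2t\Vert\sqrt{\rho}u_t\Vert_{L^2}^2\,dt$ is dominated by $\int_0^T(1+t^2)\Vert\sqrt{\rho}u_t\Vert_{L^2}^2\,dt\le C$ from Lemma \ref{L2}, while the initial value $\Vert\sqrt{\rho_0}u_t(\cdot,0)\Vert_{L^2}\le C$ follows from the compatibility condition \eqref{62} via $\sqrt{\rho_0}u_t(\cdot,0)=-\sqrt{\rho_0}(u_0\cdot\nabla u_0)-g_1$. Gr\"{o}nwall's inequality (Lemma \ref{L01}) then gives $\sup_{0\le t\le T}(1+t^2)\Vert\sqrt{\rho}u_t\Vert_{L^2}^2+\int_0^T(1+t^2)\Vert\nabla u_t\Vert_{L^2}^2\,dt\le C$. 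Inserting this into \eqref{15}, i.e. $\Vert u\Vert_{H^2}^2\le C\big(\Vert\sqrt{\rho}u_t\Vert_{L^2}^2+\Vert\sqrt{\rho}u\Vert_{L^4}^4\Vert\nabla u\Vert_{L^2}^2\big)$, and estimating $\Vert\sqrt{\rho}u\Vert_{L^4}^4$ by Lemma \ref{L04} while using the decay of $\Vert\nabla u\Vert_{L^2}$, one obtains $\sup_{0\le t\le T}(1+t^2)\Vert\nabla^2u\Vert_{L^2}^2\le C$, which completes \eqref{80}.

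For the density bound, apply $\nabla$ to $\rho_t+u\cdot\nabla\rho=0$, multiply by $|\nabla\rho|^{q-2}\nabla\rho$ and integrate; since $\dive u=0$ the pure transport term vanishes and one is left with $\frac{d}{dt}\Vert\nabla\rho\Vert_{L^q}\le C\Vert\nabla u\Vert_{L^\infty}\Vert\nabla\rho\Vert_{L^q}$, hence $\Vert\nabla\rho\Vert_{L^q}(t)\le\Vert\nabla\rho_0\Vert_{L^q}\exp\big(C\int_0^T\Vert\nabla u\Vert_{L^\infty}\,dt\big)$ by Gr\"{o}nwall. It then suffices to show $\int_0^T\Vert\nabla u\Vert_{L^\infty}\,dt\le C$: the 2D embedding $W^{2,q}\hookrightarrow W^{1,\infty}$ and Lemma \ref{L03}(1) with $r=q$ (note $\Vert\nabla\mu(\theta)\Vert_{L^k}=\alpha\Vert\theta^{\alpha-1}\nabla\theta\Vert_{L^k}\le C$ by $\theta\ge\underline{\theta}$, \eqref{60} and $\alpha\le\epsilon_0$) give $\Vert u\Vert_{W^{2,q}}\le C\Vert F\Vert_{L^q}$ with $F=-\rho u_t-\rho u\cdot\nabla u$, and $\Vert F\Vert_{L^q}\le C(\Vert\sqrt{\rho}u_t\Vert_{L^2}+\Vert\nabla u_t\Vert_{L^2})+C\Vert u\Vert_{H^2}^2$ by Lemma \ref{L08} and the 2D Sobolev embedding; integrating in time via Cauchy--Schwarz against $\int_0^T(1+t^2)^{-1}\,dt<\infty$ together with \eqref{80} and Lemma \ref{L2} gives the claim. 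Combined with $\Vert\rho\Vert_{L^\infty}\le\tilde{\rho}$ from Lemma \ref{L1}, this yields $\sup_{0\le t\le T}\Vert\rho\Vert_{W^{1,q}}\le C\Vert\rho_0\Vert_{W^{1,q}}$.

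The main obstacle is the bookkeeping of time weights in the $\sqrt{\rho}u_t$ estimate: several remainders in $\mathcal{R}$ and the $\theta_t$-term carry a full $(1+t^2)$ weight, and the Gr\"{o}nwall argument closes only because the decay $(1+t^2)\Vert\nabla u\Vert_{L^2}^2\le C$ from Lemma \ref{L2} exactly offsets this growth, the $\theta_t$-contribution being controllable solely through $\int_0^T(1+t)\Vert\theta_t\Vert_{L^2}^2\,dt\le 2M$ and the smallness of $\alpha$ (no $L_t^\infty$ bound on $\theta_t$ being available at this stage). Treating the degenerate weight $\sqrt{\rho}$ in the convective terms via Lemma \ref{L04} rather than a naive Sobolev embedding is the other technical point.
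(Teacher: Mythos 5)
Your proposal follows essentially the same route as the paper: differentiate the momentum equation in time, test with $u_t$, control the convective and viscosity-variation remainders via Gagliardo--Nirenberg, Lemma \ref{L04} and the Stokes bound \eqref{15}, close with a $(1+t^2)$-weighted Gr\"{o}nwall argument using the compatibility condition for the initial value of $\sqrt{\rho}u_t$, and then obtain the density bound from the transport equation after showing $\int_0^T\Vert\nabla u\Vert_{L^\infty}\,dt\leq C$ through the $W^{2,r}$ Stokes regularity and a time-weighted Cauchy--Schwarz in $t$. The only cosmetic differences (omitting the $\rho|u|^2|\nabla^2u||u_t|$ remainder from your schematic list, and understating in your closing remark that the $\theta_t$-term also needs the $\nabla\theta_t$ part of \eqref{60} since $\theta_t$ enters through $\Vert\theta_t\Vert_{H^1}$) do not affect the argument.
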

\begin{proof}
Taking the operator $\partial_t$ to $\eqref{1}_2$, multiplying it by $u_t$, then integrating by parts over $\Omega$, we get that
\begin{align}
  &\frac{1}{2}\frac{d}{dt}\Vert\sqrt{\rho}u_t\Vert_{L^2}^2
  +2\int\mu(\theta)|D(u_t)|^2dx\nonumber\\
  &=\int\dive(\rho u)|u_t|^2dx
   +\int\dive(\rho u)u\cdot\nabla u\cdot u_tdx
   -\int\rho u_t\cdot\nabla u\cdot u_tdx
   -2\int\mu(\theta)_tD(u):\nabla u_tdx\nonumber\\
  &\triangleq\sum_{i=1}^{4}I_i.
\end{align}
We now estimate $I_i(i=1,2,3,4)$ as follows:
\begin{align*}
  I_1&=-2\int\rho u\cdot\nabla u_t\cdot u_tdx\\
  &\leq C\Vert u\Vert_{L^{\infty}}\Vert\nabla u_t\Vert_{L^2}\Vert\sqrt{\rho} u_t\Vert_{L^2}\\
  &\leq \frac{1}{4}\int\mu(\theta)|D(u_t)|^2dx
   +C\Vert u\Vert_{H^2}^2\Vert\sqrt{\rho} u_t\Vert_{L^2}^2,
   ~~~~~~~~~~~~~~~~~~~~~~~~~~~~~~~~~~~~~~~~~~~~~~~~
\end{align*}
\begin{align*}
  I_2&=-\int\rho u\cdot\nabla(u\cdot\nabla u\cdot u_t)dx\\
  &\leq C\int\big(\rho|u||\nabla u|^2|u_t|+\rho|u|^2|\nabla^2 u||u_t|
   +\rho|u|^2|\nabla u||\nabla u_t|\big)dx\\
  &\leq C\Vert u\Vert_{L^6}\Vert\nabla u\Vert_{L^2}\Vert\nabla u\Vert_{L^6}\Vert u_t\Vert_{L^6}
    +C\Vert u\Vert_{L^6}^2\Vert\nabla^2 u\Vert_{L^2}\Vert u_t\Vert_{L^6}
    +C\Vert u\Vert_{L^6}^2\Vert\nabla u\Vert_{L^6}\Vert\nabla u_t\Vert_{L^2}\\
  &\leq C\Vert\nabla u\Vert_{L^2}^2\Vert\nabla u\Vert_{H^1}\Vert\nabla u_t\Vert_{L^2}\\
  &\leq \frac{1}{4}\int\mu(\theta)|D(u_t)|^2dx
   +C\Vert\nabla u\Vert_{L^2}^4\big(\Vert\sqrt{\rho} u_t\Vert_{L^2}^2
   +1\big),
\end{align*}
\begin{align*}
  I_3&\leq C\Vert\sqrt{\rho}u_t\Vert_{L^4}^2\Vert\nabla u\Vert_{L^2}\\
  &\leq C\Vert\sqrt{\rho}u_t\Vert_{L^2}^{\frac{1}{2}}
   \Vert\sqrt{\rho}u_t\Vert_{L^6}^{\frac{3}{2}}\Vert\nabla u\Vert_{L^2}\\
  &\leq \frac{1}{4}\int\mu(\theta)|D(u_t)|^2dx
   +C\Vert\nabla u\Vert_{L^2}^4\Vert\sqrt{\rho} u_t\Vert_{L^2}^2,
   ~~~~~~~~~~~~~~~~~~~~~~~~~~~~~~~~~~~~~~~~~~~~~~~~~~~
\end{align*}
\begin{align*}
  I_4&\leq C\alpha\Vert\theta_t\Vert_{L^4}\Vert\nabla u\Vert_{L^4}\Vert\nabla u_t\Vert_{L^2} \\
  &\leq C\alpha\Vert\theta_t\Vert_{H^1}\Vert\nabla u\Vert_{H^1}\Vert\nabla u_t\Vert_{L^2} \\
  &\leq \frac{1}{4}\int\mu(\theta)|D(u_t)|^2dx
   +C\alpha\Vert\theta_t\Vert_{H^1}^2
   (\Vert\sqrt{\rho} u_t\Vert_{L^2}^2+\Vert\nabla u\Vert_{L^2}^2),
   ~~~~~~~~~~~~~~~~~~~~~~~~~~~~~~~~~~
\end{align*}
where one has used Gagliardo-Nirenberg inequality and \eqref{15}. Collecting all estimates of $I_i$, we infer that
\begin{align}\label{18}
  &\frac{d}{dt}\Vert\sqrt{\rho}u_t\Vert_{L^2}^2
   +2\int\mu(\theta)|D(u_t)|^2dx\nonumber\\
  &\leq C\big(\Vert u\Vert_{H^2}^2
   +\Vert\nabla u\Vert_{L^2}^4
   +\alpha\Vert\theta_t\Vert_{H^1}^2\big)
   \Vert\sqrt{\rho}u_t\Vert_{L^2}^2
   +C\Vert\nabla u\Vert_{L^2}^4
   +C\alpha\Vert\nabla u\Vert_{L^2}^2\Vert\theta_t\Vert_{H^1}^2.
\end{align}
We thus obtain after multiplying \eqref{18} by $1+t^2$ and using Gr\"{o}nwall's inequality that
\begin{align}\label{16}
 &(1+t^2)\Vert\sqrt{\rho}u_t\Vert_{L^2}^2
  +\int_0^t(1+s^2)\Vert\nabla u_t\Vert_{L^2}^2ds\nonumber\\
 &\leq C\big(\Vert\sqrt{\rho_0}u_{0t}\Vert_{L^2}^2
  +\int_0^t(1+s^2)(\Vert\nabla u\Vert_{H^1}^2
   +\alpha\Vert\nabla u\Vert_{L^2}^2\Vert\theta_t\Vert_{H^1}^2)ds
  +\int_0^ts\Vert\sqrt{\rho} u_t\Vert_{L^2}^2ds\big)\nonumber\\
 &\quad\cdot\exp\big(C\int_0^t(\Vert\nabla u\Vert_{H^1}^2+\Vert\nabla u\Vert_{L^2}^4
   +\alpha\Vert\theta_t\Vert_{H^1}^2)ds\big)\nonumber\\
 &\leq C,
\end{align}
owing to Lemma \ref{L2}, \eqref{60}, \eqref{17} and the simple fact that
\begin{align*}
  \Vert\sqrt{\rho_0}u_{0t}\Vert_{L^2}
  \leq C\Vert(\sqrt{\rho_0}u_0\cdot\nabla u_0, g_1)\Vert_{L^2}
  \leq C(\Vert u_0\Vert_{H^2}^2+1),
\end{align*}
which can be obtained by $\eqref{1}_2$ and the compatibility condition \eqref{62}. Hence, \eqref{80} is proved with the aid of \eqref{15} and \eqref{16}.

Finally, we estimate $\Vert\nabla\rho\Vert_{L^q}$. According to Lemma \ref{L03}, \eqref{60}, \eqref{17}, Gagliardo-Nirenberg inequality and Poincar\'{e}'s inequality, we know that
\begin{align}\label{24}
  \Vert\nabla u\Vert_{W^{1,r}}
  &\leq C\big(\Vert \rho u_t\Vert_{L^r}+\Vert\rho u\cdot\nabla u\Vert_{L^r}\big)
  \Big(1+\Vert\alpha\nabla\theta\Vert_{L^k}
    ^{\frac{k}{k-2}\cdot\frac{2r-2}{r}}\Big)\nonumber\\
  &\leq C\big(\Vert u_t\Vert_{L^r}+\Vert u\Vert_{L^{2r}}\Vert\nabla u\Vert_{L^{2r}}\big)\nonumber\\
  &\leq C\big(\Vert\nabla u_t\Vert_{L^2}
   +\Vert\nabla u\Vert_{L^2}
    \Vert\nabla u\Vert_{L^2}^{\frac{1}{r}}\Vert\nabla u\Vert_{H^1}^{\frac{r-1}{r}}\big)\nonumber\\
  &\leq C\big(\Vert\nabla u_t\Vert_{L^2}
   +\Vert\nabla u\Vert_{L^2}^2
   +\Vert\nabla u\Vert_{L^2}^{\frac{r+1}{r}}
    \Vert\nabla^2 u\Vert_{L^2}^{\frac{r-1}{r}}\big),
  ~~~\forall \, r\in(2,k), \ k\in(2,\infty),
\end{align}
which together with Lemma \ref{L2} and \eqref{80} implies
\begin{align*}
  &\int_0^T\Vert\nabla u\Vert_{L^{\infty}}dt
  \leq C\int_0^T\Vert\nabla u\Vert_{W^{1,r}}dt\\
  &\leq C\int_0^T\big(\Vert\nabla u_t\Vert_{L^2}
   +\Vert\nabla u\Vert_{L^2}^2
   +\Vert\nabla u\Vert_{L^2}^{\frac{r+1}{r}}
    \Vert\nabla^2 u\Vert_{L^2}^{\frac{r-1}{r}}\big)dt\\
  &\leq C\big(\int_0^Tt^2\Vert\nabla u_t\Vert_{L^2}^2dt\big)^{\frac{1}{2}}
   (\int_0^Tt^{-2}dt)^{\frac{1}{2}} +C\Vert\sqrt{\rho_0} u_0\Vert_{L^2}^2\\
  &\quad+C\sup_{0\leq t\leq T}(t\Vert\nabla^2 u\Vert_{L^2})^{\frac{r-1}{r}}
    \big(\int_0^T\Vert\nabla u\Vert_{L^2}^2dt\big)^{\frac{r+1}{2r}}
   (\int_0^Tt^{-\frac{r-1}{r}\frac{2r}{r-1}}dt)^{\frac{r-1}{2r}}\\
  &\leq C.
\end{align*}
We then deduce from $\eqref{1}_2$ that
\begin{align}\label{48}
  \frac{d}{dt}\Vert\nabla\rho\Vert_{L^q}\leq C\Vert\nabla u\Vert_{L^{\infty}}\Vert\nabla\rho\Vert_{L^q},
\end{align}
while it follows  that
\begin{align*}
  \Vert\nabla\rho\Vert_{L^q}\leq\Vert\nabla\rho_0\Vert_{L^q}
    \exp\big(C\int_0^T\Vert\nabla u\Vert_{L^{\infty}}dt\big)
  \leq C\Vert\nabla\rho_0\Vert_{L^q}.
\end{align*}
These complete the proof of Lemma \ref{L3}.
\end{proof}

We now get some estimates about $\theta$.

\begin{lemma}\label{L4}
Under the assumptions of Proposition \ref{P0}, it holds that
\begin{align*}
   \sup_{0\leq t\leq T}\big(\Vert\rho\theta^{\beta+2}\Vert_{L^1}
   +(1+t)\Vert\nabla\theta^{\beta+1}\Vert_{L^2}^2\big)
   +\int_0^T\big(\Vert\nabla\theta^{\beta+1}\Vert_{H^1}^2
   +(1+t)\Vert\sqrt{\rho}\theta^{\frac{\beta}{2}}\theta_t\Vert_{L^2}^2\big)dt
   \leq C.
\end{align*}
\end{lemma}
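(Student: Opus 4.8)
The plan is to run a weighted energy estimate on the temperature equation with multiplier $\theta^{\beta+1}$, which is the natural quantity that turns the degenerate diffusion $\dive(\kappa\nabla\theta)=\dive(\theta^\beta\nabla\theta)$ into a coercive term in $\nabla\theta^{\beta+1}$. First I would multiply $\eqref{1}_3$ by $\theta^{\beta+1}$ and integrate over $\Omega$; using $c_v=1$, $\dive u=0$, and the boundary condition $\partial\theta/\partial\mathbf n=0$, the convective and time terms combine into $\frac{1}{\beta+2}\frac{d}{dt}\int\rho\theta^{\beta+2}\,dx$, the diffusion term yields $c\int|\nabla\theta^{\beta+1}|^2\,dx$ (after rewriting $\theta^\beta\nabla\theta\cdot\nabla\theta^{\beta+1}=\frac{\beta+1}{(2\beta+1)^2}\cdots$ up to positive constants, and using $\theta\geq\underline\theta$ from Lemma \ref{L1} so the constant is uniform), and the viscous heating $2\mu(\theta)|D(u)|^2\theta^{\beta+1}$ sits on the right. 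So the identity reads, schematically,
\begin{align*}
  \frac{1}{\beta+2}\frac{d}{dt}\int\rho\theta^{\beta+2}\,dx
  +c\int|\nabla\theta^{\beta+1}|^2\,dx
  =2\int\theta^{\alpha+\beta+1}|D(u)|^2\,dx.
\end{align*}
The right-hand side is controlled by $C\|\theta\|_{L^\infty}^{\alpha+\beta+1}\|\nabla u\|_{L^4}^2\leq C(2M)^{(\alpha+\beta+1)/2}\|\nabla u\|_{L^2}\|\nabla u\|_{H^1}$ using the a priori bound \eqref{60} and the Gagliardo–Nirenberg inequality; by Lemma \ref{L2}, $\int_0^T\|\nabla u\|_{L^2}\|\nabla u\|_{H^1}\,dt\leq C$. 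Integrating in $t$ and invoking Lemma \ref{L05}/\ref{L06} plus Poincaré to upgrade $\|\nabla\theta^{\beta+1}\|_{L^2}^2$ into the $H^1$-type control, this yields the first two bounds: $\sup_t\|\rho\theta^{\beta+2}\|_{L^1}+\int_0^T\|\nabla\theta^{\beta+1}\|_{H^1}^2\,dt\leq C$.

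Next I would obtain the time-weighted bound $(1+t)\|\nabla\theta^{\beta+1}\|_{L^2}^2$. For this I multiply $\eqref{1}_3$ instead by $\theta^\beta\theta_t$ (equivalently test the equation rewritten with $\partial_t$-type multiplier), producing $\|\sqrt\rho\theta^{\beta/2}\theta_t\|_{L^2}^2$ on one side and $\frac{d}{dt}\big(\text{something comparable to }\|\nabla\theta^{\beta+1}\|_{L^2}^2\big)$, with remainder terms $\int\rho u\cdot\nabla\theta\,\theta^\beta\theta_t$, $\int\mu|D(u)|^2\theta^\beta\theta_t$, and lower-order terms from commuting $\theta^\beta$ past the diffusion. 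The convective term is absorbed by $\frac14\|\sqrt\rho\theta^{\beta/2}\theta_t\|_{L^2}^2 + C\|\sqrt\rho u\|_{L^4}^2\|\nabla\theta\|_{L^4}^2$, and $\|\nabla\theta\|_{L^4}$ is bounded in terms of $\|\nabla\theta^{\beta+1}\|_{H^1}$ and $\underline\theta$ after the substitution; the heating term is handled as before. Then multiply this differential inequality by $(1+t)$, integrate, and use the already-established $\int_0^T\|\nabla\theta^{\beta+1}\|_{H^1}^2\,dt\leq C$ together with Lemma \ref{L2} to close Grönwall's inequality (Lemma \ref{L01}), producing $\sup_t(1+t)\|\nabla\theta^{\beta+1}\|_{L^2}^2+\int_0^T(1+t)\|\sqrt\rho\theta^{\beta/2}\theta_t\|_{L^2}^2\,dt\leq C$.

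The main obstacle I anticipate is the interplay with the vacuum: the left-hand quantity $\|\rho\theta^{\beta+2}\|_{L^1}$ degenerates where $\rho=0$, so it does not by itself control $\|\theta\|_{L^p}$, and the remainder terms in the second estimate involve $\|\sqrt\rho u\|_{L^4}$, which requires Desjardins' logarithmic inequality (Lemma \ref{L04}) rather than a clean power bound. The way around this is exactly Lemma \ref{L08}: since $0\leq\rho\leq\tilde\rho$ and (by conservation of mass) $\int\rho\,dx=\int\rho_0\,dx>0$ is bounded below, Lemma \ref{L08} converts $\|\rho\theta^{\beta+2}\|_{L^1}$-type control plus $\|\nabla\theta^{\beta+1}\|_{L^2}$ into genuine $L^p$ control of $\theta^{\beta+1}$, hence of $\theta$. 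One must also be careful that all constants coming from $\|\theta\|_{L^\infty}^{\alpha+\beta+1}\leq(2M)^{(\alpha+\beta+1)/2}$ remain in the class "$C$" (depending only on $\tilde\rho,\underline\theta,\Omega,\|(u_0,\theta_0)\|_{H^2}$), which is why the smallness of $\alpha$ and the a priori bound \eqref{60} on $M$ are both used; no new smallness beyond $\alpha\leq\epsilon_0$ is needed here. Finally Lemma \ref{L06} is invoked one last time to turn $\|\Delta\theta^{\beta+1}\|_{L^2}+\|\nabla\theta^{\beta+1}\|_{L^2}$ into the full $\|\nabla^2\theta^{\beta+1}\|_{L^2}$ hidden in $\|\nabla\theta^{\beta+1}\|_{H^1}$.
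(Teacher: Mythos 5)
Your overall architecture (test with $\theta^{\beta+1}$, then with $\kappa(\theta)\theta_t$, then elliptic regularity via Lemma \ref{L06}) matches the paper, but there are two concrete gaps. First, you bound the viscous heating in the first estimate by $C\Vert\theta\Vert_{L^\infty}^{\alpha+\beta+1}\Vert\nabla u\Vert_{L^4}^2\leq C(2M)^{(\alpha+\beta+1)/2}\cdots$ and assert this constant "remains in the class $C$". It does not: the smallness $\alpha\leq M^{-1}$ tames only the factor $(2M)^{\alpha/2}$, not $(2M)^{(\beta+1)/2}$, and $\beta$ is a fixed, possibly large parameter. A constant of size $M^{(\beta+1)/2}$ in Lemma \ref{L4} would propagate into \eqref{27} and make the closure $M=C\exp(C/c_0)$ of the bootstrap circular (for $\beta\geq1$ the inequality $CM^{(\beta+1)/2}\exp(C/c_0)\leq M$ cannot be arranged). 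The paper avoids this entirely: it writes $\int\theta^{\alpha+\beta+1}|\nabla u|^2dx\leq C\Vert\theta^{\beta+1}\Vert_{L^2}\Vert\nabla u\Vert_{L^4}^2$ using only $\theta^{\alpha}\leq C$, then converts $\Vert\theta^{\beta+1}\Vert_{L^2}$ via Lemma \ref{L08} into $\Vert\rho\theta^{\beta+2}\Vert_{L^1}+\Vert\nabla\theta^{\beta+1}\Vert_{L^2}$, i.e.\ into the very quantities the Gr\"onwall argument is tracking, so the final constant is a genuine class-$C$ constant.

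Second, in the $\kappa(\theta)\theta_t$ estimate you say the heating term $\int 2\mu(\theta)|D(u)|^2\kappa(\theta)\theta_t\,dx$ is "handled as before", but the first step contained no $\theta_t$, and here $\theta_t$ appears with \emph{no} $\sqrt{\rho}$ weight, so in the vacuum region it cannot be absorbed into $\Vert\sqrt{\rho}\theta^{\beta/2}\theta_t\Vert_{L^2}^2$; nor is $\Vert\nabla\theta_t\Vert_{L^2}$ available at this stage to invoke a Poincar\'e-type bound. The paper's essential device here is the identity $\theta^{\alpha+\beta}\theta_t|D(u)|^2=\frac{1}{\alpha+\beta+1}\partial_t\big(\theta^{\alpha+\beta+1}\big)|D(u)|^2$, followed by moving $\frac{d}{dt}\int\theta^{\alpha+\beta+1}|D(u)|^2dx$ to the left side and controlling the commutator $\int\theta^{\alpha+\beta+1}(|D(u)|^2)_t dx$ by $C\Vert\theta^{\beta+1}\Vert_{L^4}\Vert\nabla u\Vert_{L^4}\Vert\nabla u_t\Vert_{L^2}$, which is admissible thanks to Lemma \ref{L3}. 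Without this time-integration-by-parts your second differential inequality does not close. (A smaller point: you invoke Lemma \ref{L06} to upgrade to $\Vert\nabla\theta^{\beta+1}\Vert_{H^1}$ already after the first step, but the elliptic right-hand side contains $\rho\theta_t$, so the $L^2_t$ bound on $\Vert\sqrt{\rho}\theta^{\beta/2}\theta_t\Vert_{L^2}$ from the second step must come first.)
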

\begin{proof}

\underline{step 1}.
Multiply $\eqref{1}_3$ by $\theta^{\beta+1}$ and integrate over $\Omega$ yields
\begin{align*}
  &\frac{d}{dt}\int\rho\theta^{\beta+2}dx+\Vert\nabla\theta^{\beta+1}\Vert_{L^2}^2\\
  &\leq C\int\theta^{\alpha+\beta+1}|\nabla u|^2dx
   \leq C\Vert\theta^{\beta+1}\Vert_{L^2}\Vert\nabla u\Vert_{L^4}^2\\
  &\leq C\big(\Vert\rho\theta^{\beta+1}\Vert_{L^1}
   +\Vert\nabla\theta^{\beta+1}\Vert_{L^2}\big)
   \Vert\nabla u\Vert_{L^2}\Vert\nabla u\Vert_{H^1}\\
  &\leq \frac{1}{2}\Vert\nabla\theta^{\beta+1}\Vert_{L^2}^2
   +C\Vert\nabla u\Vert_{L^2}^2\Vert\nabla u\Vert_{H^1}^2
   +C\Vert\nabla u\Vert_{L^2}\Vert\nabla u\Vert_{H^1}\int\rho\theta^{\beta+2}dx,
\end{align*}
where we have used \eqref{17}, Lemma \ref{L08},  Gagliardo-Nirenberg inequality and Cauchy's inequality. We then obtain from Gr\"{o}nwall's inequality and Lemma \ref{L2} that
\begin{align}\label{19}
  &\Vert\rho\theta^{\beta+2}\Vert_{L^1}
   +\int_0^t\Vert\nabla\theta^{\beta+1}\Vert_{L^2}^2ds\nonumber\\
  &\leq C\big(\Vert\rho_0\theta_0^{\beta+2}\Vert_{L^1}
   +\int_0^t\Vert\nabla u\Vert_{L^2}^2\Vert\nabla u\Vert_{H^1}^2ds\big)
   \exp(C\int_0^t\Vert\nabla u\Vert_{H^1}^2ds)\nonumber\\
  &\leq C.
\end{align}

\underline{step 2}.
Taking $\eqref{1}_1$ and \eqref{3} into account, multiplying $\eqref{1}_3$ by $\kappa(\theta)\theta_t$, then integrating by parts over $\Omega$, we find that
\begin{align}\label{23}
  &\frac{1}{2}\frac{d}{dt}\Vert\kappa(\theta)\nabla\theta\Vert_{L^2}^2
   +\int\rho\kappa(\theta)|\theta_t|^2dx\nonumber\\
  &=-\int(\rho u\cdot\nabla\theta)\kappa(\theta)\theta_tdx
   +\int2\mu(\theta)|D(u)|^2\kappa(\theta)\theta_tdx\nonumber\\
  &\leq\frac{1}{4}\int\rho\kappa(\theta)|\theta_t|^2dx
   +C\Vert u\Vert_{L^{\infty}}^2\Vert\kappa(\theta)\nabla\theta\Vert_{L^2}^2
   +\frac{2}{\alpha+\beta+1}\frac{d}{dt}\int\theta^{\alpha+\beta+1}|D(u)|^2dx\nonumber\\
  &\quad+ C\Vert\theta^{\beta+1}\Vert_{L^4}\Vert\nabla u\Vert_{L^4}\Vert\nabla u_t\Vert_{L^2}\nonumber\\
  &\leq\frac{1}{4}\int\rho\kappa(\theta)|\theta_t|^2dx
   +\frac{2}{\alpha+\beta+1}\frac{d}{dt}\int\theta^{\alpha+\beta+1}|D(u)|^2dx\nonumber\\
  &\quad+C\Vert\nabla u\Vert_{H^1}^2(\Vert\nabla\theta^{\beta+1}\Vert_{L^2}^2
   +\Vert\rho\theta^{\beta+2}\Vert_{L^1}^2)
   +C\Vert\nabla u_t\Vert_{L^2}^2,
\end{align}
where one has used Lemma \ref{L08}.
Multiplying \eqref{23} by $1+t$ and integrating the resulting inequality over $[0,t]$, using \eqref{17}, Lemma \ref{L08}, \eqref{19} and Lemmas \ref{L2}-\ref{L3} leads to
\begin{align*}
  &(1+t)\Vert\nabla\theta^{\beta+1}\Vert_{L^2}^2
   +\int_0^t(1+s)\Vert\sqrt{\rho}\theta^{\frac{\beta}{2}}\theta_t\Vert_{L^2}^2ds\\
  &\leq C\Vert\nabla\theta_0^{\beta+1}\Vert_{L^2}^2
   +C(1+t)\Vert\theta^{\alpha+\beta+1}|\nabla u|^2\Vert_{L^1}
   +C\int_0^t\Vert\nabla\theta^{\beta+1}\Vert_{L^2}^2ds\\
  &\quad +C\int_0^t\Vert\nabla u\Vert_{H^1}^2(1+s)\Vert\nabla\theta^{\beta+1}\Vert_{L^2}^2ds
   +C\int_0^t(1+s)(\Vert\nabla u\Vert_{H^1}^2\Vert\rho\theta^{\beta+2}\Vert_{L^1}^2
   +\Vert\nabla u_t\Vert_{L^2}^2)ds\\
  &\leq C
   +C(1+t)(\Vert\rho\theta^{\beta+2}\Vert_{L^1}+\Vert\nabla\theta^{\beta+1}\Vert_{L^2})
    \Vert\nabla u\Vert_{H^1}^2
   +C\int_0^t\Vert\nabla u\Vert_{H^1}^2(1+s)\Vert\nabla\theta^{\beta+1}\Vert_{L^2}^2ds\\
  &\leq \frac{1}{2}(1+t)\Vert\nabla\theta^{\beta+1}\Vert_{L^2}^2
  +C +C\int_0^t\Vert\nabla u\Vert_{H^1}^2(1+s)\Vert\nabla\theta^{\beta+1}\Vert_{L^2}^2ds.
\end{align*}
This together with Gr\"{o}nwall's inequality and Lemma \ref{L2} gives
\begin{align}\label{20}
  (1+t)\Vert\nabla\theta^{\beta+1}\Vert_{L^2}^2
   +\int_0^t(1+s)\Vert\sqrt{\rho}\theta^{\frac{\beta}{2}}\theta_t\Vert_{L^2}^2ds
   \leq C.
\end{align}

\underline{step 3}. We now rewrite $\eqref{1}_3$ as
\begin{align}\label{69}
\left\{\begin{array}{l}
  -\Delta\theta^{\beta+1}
  =(\beta+1)\big(2\mu(\theta)|D(u)|^2-\rho\theta_t-\rho u\cdot\nabla\theta\big),
   ~~~~ \text{in} ~\Omega,\\
  \frac{\partial \theta}{\partial\mathbf{n}}=0,
  ~~~~~~~~~~~~~~~~~~~~~~~~~~~~~~~~~~~~~~~~~~~~~~ ~~~~~~~~~~~~~ \text{on} ~\partial\Omega.
\end{array}\right.
\end{align}
It gives rise to
\begin{align}\label{32}
  \Vert\Delta\theta^{\beta+1}\Vert_{L^2}
  &\leq C\Vert2\mu(\theta)|D(u)|^2-\rho\theta_t-\rho u\cdot\nabla\theta\Vert_{L^2}\nonumber\\
  &\leq C\Vert\nabla u\Vert_{L^4}^2
   +C\Vert\rho\theta_t\Vert_{L^2}
   +C\Vert u\Vert_{L^{\infty}}\Vert\nabla\theta^{\beta+1}\Vert_{L^2}\\
  &\leq C\Vert\nabla u\Vert_{H^1}^2+C\Vert\sqrt{\rho}\theta_t\Vert_{L^2}
   +C\Vert\nabla u\Vert_{H^1}\Vert\nabla\theta^{\beta+1}\Vert_{L^2},\nonumber
\end{align}
where we have used \eqref{17}.  It follows from  Lemma \ref{L06} and \eqref{32} that
\begin{align}\label{21}
  &\Vert\nabla^2\theta^{\beta+1}\Vert_{L^2}\leq C\big(\Vert\Delta\theta^{\beta+1}\Vert_{L^2}
   +\Vert\nabla\theta^{\beta+1}\Vert_{L^2}\big)\nonumber\\
  &\leq C\Vert\nabla u\Vert_{H^1}^2+C\Vert\sqrt{\rho}\theta_t\Vert_{L^2}
   +C(\Vert\nabla u\Vert_{H^1}+1)\Vert\nabla\theta^{\beta+1}\Vert_{L^2}.
\end{align}
We compute that
\begin{align}
  &\int_0^t\Vert\nabla^2\theta^{\beta+1}\Vert_{L^2}^2ds\nonumber\\
  &\leq C\int_0^t\big(\Vert\nabla u\Vert_{H^1}^4
   +\Vert\sqrt{\rho}\theta^{\frac{\beta}{2}}\theta_t\Vert_{L^2}^2
   +(\Vert\nabla u\Vert_{H^1}^2+1)\Vert\nabla\theta^{\beta+1}\Vert_{L^2}^2\big)ds
   \leq C,
\end{align}
owing to Lemmas \ref{L2}-\ref{L3}, \eqref{19} and \eqref{20}.
Moreover, in view of \eqref{3} and Lemma \ref{L05}, since
\begin{align}\label{58}
  \Vert\nabla^2\theta\Vert_{L^2}
  &\leq C\Vert\theta^{\beta}\nabla^2\theta\Vert_{L^2}
  =C\Vert\frac{1}{\beta+1}\nabla^2\theta^{\beta+1}
   -\beta\theta^{\beta-1}\nabla\theta\otimes\nabla\theta\Vert_{L^2}\nonumber\\
  &\leq C\Vert\nabla^2\theta^{\beta+1}\Vert_{L^2}
   +C\Vert\nabla\theta^{\beta+1}\Vert_{L^4}^2
   \leq C\Vert\nabla^2\theta^{\beta+1}\Vert_{L^2}
   (1+\Vert\nabla\theta^{\beta+1}\Vert_{L^2}),
\end{align}
we have
\begin{align}\label{22}
  \int_0^t\Vert\nabla^2\theta\Vert_{L^2}^2ds
  \leq C\int_0^t\Vert\nabla^2\theta^{\beta+1}\Vert_{L^2}^2
   (1+\Vert\nabla\theta^{\beta+1}\Vert_{L^2}^2)ds
  \leq C.
\end{align}
The proof of Lemma \ref{L4} is finished.
\end{proof}

\begin{lemma}\label{L5}
Under the assumptions of Proposition \ref{P0}, it holds that
\begin{align*}
  \sup_{0\leq t\leq T}\big((1+t^2)\Vert\sqrt{\rho}\theta_t\Vert_{L^2}^2\big)
   +\int_0^T(1+t^2)\Vert\theta^{\frac{\beta}{2}}\nabla\theta_t\Vert_{L^2}^2dt
  \leq  C\exp\big(\frac{C}{c_0}\big),
\end{align*}
where $c_0$ is a positive constant.  
\end{lemma}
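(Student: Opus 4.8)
The plan is to run a time-weighted energy estimate for $\theta_t$. First I rewrite $\eqref{1}_3$, using $\eqref{1}_1$, in the form $\rho\theta_t+\rho u\cdot\nabla\theta=2\mu(\theta)|D(u)|^2+\dive(\kappa(\theta)\nabla\theta)$, differentiate it with respect to $t$, multiply by $\theta_t$, and integrate over $\Omega$. Using the mass equation to collapse the transport part and integrating by parts with the Neumann conditions $\partial_{\mathbf n}\theta=\partial_{\mathbf n}\theta_t=0$, this produces
\begin{align*}
  \tfrac12\tfrac{d}{dt}\Vert\sqrt\rho\theta_t\Vert_{L^2}^2+\Vert\theta^{\frac\beta2}\nabla\theta_t\Vert_{L^2}^2
  &=-2\int\rho u\cdot\nabla\theta_t\,\theta_t\,dx-\int\rho_t(u\cdot\nabla\theta)\theta_t\,dx-\int\rho(u_t\cdot\nabla\theta)\theta_t\,dx\\
  &\quad+\int\big(2\mu_t|D(u)|^2+4\mu(\theta)D(u):D(u_t)\big)\theta_t\,dx-\int\kappa_t\,\nabla\theta\cdot\nabla\theta_t\,dx,
\end{align*}
with $\mu_t=\alpha\theta^{\alpha-1}\theta_t$ and $\kappa_t=\beta\theta^{\beta-1}\theta_t$, and with $\rho_t=-\dive(\rho u)$ integrated by parts once more in the second term on the right.

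Next I estimate each term on the right. The available inputs are the pointwise bounds $0\le\rho\le\tilde\rho$ and $\theta\ge\underline\theta$ (Lemma \ref{L1}); the velocity bounds of Lemmas \ref{L2}--\ref{L3}, in particular $(1+t^2)\Vert u\Vert_{H^1\cap H^2}^2\le C$, $\int_0^T(1+t^2)(\Vert\nabla u\Vert_{H^1}^2+\Vert\nabla u_t\Vert_{L^2}^2)\,dt\le C$; the temperature bounds of Lemma \ref{L4}; the Gagliardo--Nirenberg inequality (Lemma \ref{L02}) and the Desjardins logarithmic inequality (Lemma \ref{L04}). The essential device is that $\theta_t$ enters several of these terms through $L^3$, $L^4$ or $L^6$ norms, and since $\theta_t$ is not controlled pointwise on the vacuum set, I estimate it via the Poincar\'e-type Lemma \ref{L08} with $g=\rho$: this is legitimate because $\eqref{79}$ forces $\int\rho\,dx=\int\rho_0\,dx\gtrsim c_0|\Omega|$ (the set $\{\rho_0\le c_0\}$ having exponentially small measure), so $\Vert\theta_t\Vert_{L^p}\le C(c_0)\Vert\sqrt\rho\theta_t\Vert_{L^2}+C\Vert\nabla\theta_t\Vert_{L^2}$ with a constant that deteriorates as $c_0\to0$. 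The smallness $\alpha\le\epsilon_0$ is used to make the $\mu_t$-term perturbative; dissipation contributions are passed to the left using $\theta\ge\underline\theta$ to convert between $\Vert\nabla\theta_t\Vert_{L^2}$ and $\Vert\theta^{\beta/2}\nabla\theta_t\Vert_{L^2}$; and any $\Vert\nabla u_t\Vert_{L^2}^2$ that surfaces (from $\int\mu D(u):D(u_t)\theta_t$) is kept rather than absorbed, since it is integrable against $1+t^2$ by Lemma \ref{L3}.

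After summing, I multiply the differential inequality by $1+t^2$ and integrate in $t$; the extra term $\int_0^t 2s\Vert\sqrt\rho\theta_t\Vert_{L^2}^2\,ds$ coming from $\tfrac{d}{dt}(1+t^2)$ is absorbed using the bound $\int_0^T(1+t)\Vert\sqrt\rho\theta^{\beta/2}\theta_t\Vert_{L^2}^2\,dt\le C$ of Lemma \ref{L4} together with $\theta\ge\underline\theta$. The initial value $\Vert\sqrt{\rho_0}\theta_{0t}\Vert_{L^2}\le C$ follows from evaluating $\eqref{1}_3$ at $t=0$ and the compatibility condition $\eqref{62}$, namely $\rho_0\theta_{0t}=\sqrt{\rho_0}\,g_2-\rho_0 u_0\cdot\nabla\theta_0$. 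Applying Gr\"onwall's inequality (Lemma \ref{L01}) then yields the assertion, the factor $\exp(C/c_0)$ arising precisely because the Gr\"onwall coefficient carries the $c_0$-dependent constant from Lemma \ref{L08} (multiplied by the $\alpha$-small viscosity-variation factor and by integrals of $u$ already bounded in the previous lemmas).

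The main obstacle, as indicated, is the control of the terms in which $\theta_t$ appears at an integrability exponent above $2$, namely $\int\rho(u_t\cdot\nabla\theta)\theta_t$ and $\int\mu_t|D(u)|^2\theta_t$ (and, after integration by parts, parts of the transport remainders): because $\theta_t$ is genuinely unbounded near the vacuum region, these cannot be closed by energy estimates alone, and the only available substitute --- the Poincar\'e-type Lemma \ref{L08} --- is what forces the final constant to blow up as the vacuum measure $|V|$ (equivalently $c_0$) degenerates. Making the estimate close at all hinges on combining that substitution with the time-weighted bounds already established for $u$, $\nabla u_t$ and $\nabla\theta^{\beta+1}$, and on using $\alpha\le\epsilon_0$ to keep the viscosity-variation contributions small.
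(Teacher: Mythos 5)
Your overall skeleton matches the paper's: differentiate $\eqref{1}_3$ in $t$, test with $\theta_t$, weight by $1+t^2$, and close with Gr\"onwall, using Lemma \ref{L08} for $\Vert\theta_t\Vert_{L^4}$, the smallness of $\alpha$ for the $\mu_t$-term, and $\int_0^T(1+t^2)\Vert\nabla u_t\Vert_{L^2}^2dt\le C$ for the $\mu(\theta)D(u):D(u_t)\theta_t$ term. But there is a genuine gap in how you close the $\kappa_t$-term $-\int\beta\theta^{\beta-1}\theta_t\nabla\theta\cdot\nabla\theta_t\,dx$. After Gagliardo--Nirenberg and Young this term leaves a contribution of the form $C\,\Vert\theta_t\Vert_{L^2}^2\big(\Vert\nabla\theta^{\beta+1}\Vert_{L^2}\Vert\nabla^2\theta^{\beta+1}\Vert_{L^2}+\Vert\nabla\theta^{\beta+1}\Vert_{L^2}^2\Vert\nabla^2\theta^{\beta+1}\Vert_{L^2}^2\big)$. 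If you now replace $\Vert\theta_t\Vert_{L^2}^2$ by $C(\Vert\sqrt{\rho}\theta_t\Vert_{L^2}^2+\Vert\nabla\theta_t\Vert_{L^2}^2)$ via Lemma \ref{L08}, as you propose, the piece proportional to $\Vert\nabla\theta_t\Vert_{L^2}^2$ carries the coefficient $\Vert\nabla\theta^{\beta+1}\Vert_{L^2}\Vert\nabla^2\theta^{\beta+1}\Vert_{L^2}+\cdots$, which under the bootstrap hypothesis \eqref{60} is only bounded by $CM^{\beta+1}$ --- a large constant, not a small one. Since $\Vert\nabla\theta_t\Vert_{L^2}^2$ is (up to $\theta\ge\underline{\theta}$) the dissipation itself and not the quantity propagated by Gr\"onwall, this term can neither be absorbed into the left-hand side nor handled by Gr\"onwall; the estimate does not close. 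Note also that the constant in Lemma \ref{L08} does not deteriorate as $c_0\to0$ (it depends only on $\tilde{\rho}$, $\int\rho_0\,dx$, $p$, $\Omega$), so your attribution of the $\exp(C/c_0)$ factor to that lemma is not correct.

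The missing idea is the paper's decomposition \eqref{82}: split $\Vert\theta_t\Vert_{L^2}^2$ over $\{\rho\le c_0\}$ and $\{\rho>c_0\}$, using that $|\{\rho(\cdot,t)\le c_0\}|=|V|$ is preserved by the transport equation. On the near-vacuum set, H\"older gives $\int_{\{\rho\le c_0\}}\theta_t^2dx\le |V|^{1/2}\Vert\theta_t\Vert_{L^4}^2\le C|V|^{1/2}(\Vert\sqrt{\rho}\theta_t\Vert_{L^2}^2+\Vert\nabla\theta_t\Vert_{L^2}^2)$, so the dangerous $\Vert\nabla\theta_t\Vert_{L^2}^2$ contribution acquires the factor $|V|^{1/2}$, and the hypothesis \eqref{79} is exactly what makes $C|V|^{1/2}M^{\beta+1}\le\frac18$ so that it can be absorbed by the dissipation; on the complement one uses $\theta_t^2\le\rho\theta_t^2/c_0$, which is where the $1/c_0$ in the Gr\"onwall coefficient $\mathcal{A}(t)$ (and hence the final $\exp(C/c_0)$) actually comes from. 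Without this splitting --- i.e., without using \eqref{79} at precisely this point --- the argument you outline cannot be completed.
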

\begin{proof}
Differentiating $\eqref{1}_3$ with respect to $t$, multiplying it by $\theta_t$, then integrating over $\Omega$ yields
\begin{align}\label{35}
  &\frac{1}{2}\frac{d}{dt}\int\rho|\theta_t|^2dx
   +\Vert\theta^{\frac{\beta}{2}}\nabla\theta_t\Vert_{L^2}^2\nonumber\\
  &=\int\dive(\rho u)|\theta_t|^2dx
   +\int\dive(\rho u)u\cdot\nabla\theta\theta_tdx
   -\int\rho u_t\cdot\nabla\theta\theta_tdx\nonumber\\
  &\quad +2\int\mu(\theta)(|D(u)|^2)_t\theta_tdx
   +2\int\mu(\theta)_t|D(u)|^2\theta_tdx
   -\int\kappa(\theta)_t\nabla\theta\cdot\nabla\theta_tdx\nonumber\\
  &\triangleq\sum_{i=5}^{10}I_i.
\end{align}
We estimate $I_i(i=5, \cdots, 10)$ in \eqref{35} term by term:
\begin{align*}
  I_5&=-2\int\rho u\cdot\nabla\theta_t\theta_tdx
  \leq C\Vert u\Vert_{L^{\infty}}\Vert\nabla\theta_t\Vert_{L^2}\Vert\sqrt{\rho}\theta_t\Vert_{L^2}\\
  &\leq \frac{1}{8}\Vert\theta^{\frac{\beta}{2}}\nabla\theta_t\Vert_{L^2}^2
    +C\Vert\nabla u\Vert_{H^1}^2\Vert\sqrt{\rho}\theta_t\Vert_{L^2}^2,
    ~~~~~~~~~~~~~~~~~~~~~~~~~~~~~~~~~~~~~~~~~~~~~~~~~~~~~~~~~~~~~~~~~~~
\end{align*}
\begin{align*}
  I_6&=-\int\rho u\cdot\nabla(u\cdot\nabla\theta\theta_t)dx
  \leq\int\rho|u|\big(|\nabla u||\nabla\theta||\theta_t|+|u||\nabla^2\theta||\theta_t|
    +|u||\nabla\theta||\nabla\theta_t|\big)dx\\
  &\leq C\Vert u\Vert_{L^{\infty}}\big(\Vert\nabla u\Vert_{L^4}\Vert\nabla\theta\Vert_{L^4}
    \Vert\sqrt{\rho}\theta_t\Vert_{L^2}+\Vert u\Vert_{L^{\infty}}\Vert\nabla^2\theta\Vert_{L^2}
    \Vert\sqrt{\rho}\theta_t\Vert_{L^2}+\Vert u\Vert_{L^{\infty}}\Vert\nabla\theta\Vert_{L^2}
    \Vert\nabla\theta_t\Vert_{L^2}\big)\\
  &\leq C\Vert\nabla u\Vert_{H^1}^2\Vert\nabla\theta\Vert_{H^1}\Vert\sqrt{\rho}\theta_t\Vert_{L^2}
    +C\Vert\nabla u\Vert_{H^1}^2\Vert\nabla\theta\Vert_{L^2}\Vert\nabla\theta_t\Vert_{L^2}\\
  &\leq\frac{1}{8}\Vert\theta^{\frac{\beta}{2}}\nabla\theta_t\Vert_{L^2}^2
    +C(\Vert\nabla u\Vert_{H^1}^2+\Vert\nabla u\Vert_{H^1}^4)\Vert\nabla\theta\Vert_{H^1}^2
    +C\Vert\nabla u\Vert_{H^1}^2\Vert\sqrt{\rho}\theta_t\Vert_{L^2}^2,
\end{align*}
\begin{align*}
  I_7&\leq C\Vert\rho u_t\Vert_{L^2}
   \Vert\nabla\theta\Vert_{L^4}\Vert\theta_t\Vert_{L^4}\\
  &\leq C\Vert\sqrt{\rho} u_t\Vert_{L^2}
   \Vert\nabla\theta\Vert_{H^1}\Vert\theta_t\Vert_{H^1}\\
  &\leq C\Vert\sqrt{\rho} u_t\Vert_{L^2}
   \Vert\nabla\theta\Vert_{H^1}
   (\Vert\sqrt{\rho}\theta_t\Vert_{L^2}+\Vert\nabla\theta_t\Vert_{L^2})\\
  &\leq \frac{1}{8}\Vert\theta^{\frac{\beta}{2}}\nabla\theta_t\Vert_{L^2}^2
   +C\Vert\sqrt{\rho} u_t\Vert_{L^2}^2(\Vert\nabla\theta\Vert_{H^1}^2+1)
   +C\Vert\nabla\theta\Vert_{H^1}^2\Vert\sqrt{\rho}\theta_t\Vert_{L^2}^2,
   ~~~~~~~~~~~~~~~~~~~~~~~~~~~~~~~
\end{align*}
\begin{align*}
  I_8&\leq C\Vert\nabla u\Vert_{L^4}
   \Vert\nabla u_t\Vert_{L^2}\Vert\theta_t\Vert_{L^4}\\
  &\leq C\Vert\nabla u\Vert_{H^1}\Vert\nabla u_t\Vert_{L^2}\Vert\theta_t\Vert_{H^1}\\
  &\leq C\Vert\nabla u\Vert_{H^1}\Vert\nabla u_t\Vert_{L^2}
  (\Vert\sqrt{\rho}\theta_t\Vert_{L^2}+\Vert\nabla\theta_t\Vert_{L^2})\\
  &\leq \frac{1}{8}\Vert\theta^{\frac{\beta}{2}}\nabla\theta_t\Vert_{L^2}^2
   +C(\Vert\nabla u\Vert_{H^1}^2+1)\Vert\nabla u_t\Vert_{L^2}^2
   +C\Vert\nabla u\Vert_{H^1}^2\Vert\sqrt{\rho}\theta_t\Vert_{L^2}^2,
   ~~~~~~~~~~~~~~~~~~~~~~~~~~~~~~~~
\end{align*}
\begin{align*}
  I_9&=2\alpha\int\theta^{\alpha-1}|D(u)|^2|\theta_t|^2dx
  \leq C\alpha\Vert\nabla u\Vert_{L^4}^2\Vert\theta_t\Vert_{L^4}^2\\
  &\leq C\alpha\Vert\nabla u\Vert_{H^1}^2
   (\Vert\sqrt{\rho}\theta_t\Vert_{L^2}^2+\Vert\nabla\theta_t\Vert_{L^2}^2)\\
  &\leq C\alpha\Vert\theta^{\frac{\beta}{2}}\nabla\theta_t\Vert_{L^2}^2
   +C\Vert\nabla u\Vert_{H^1}^2\Vert\sqrt{\rho}\theta_t\Vert_{L^2}^2,
   ~~~~~~~~~~~~~~~~~~~~~~~~~~~~~~~~~~~~~~~~~~~~~~~~~~~~~~~~~~~~~~
\end{align*}
\begin{align*}
  I_{10}&=-\int\beta\theta^{\beta-1}\theta_t\nabla\theta\cdot\nabla\theta_tdx
  \leq C\Vert\theta_t\Vert_{L^4}\Vert\nabla\theta^{\beta+1}\Vert_{L^4}
   \Vert\nabla\theta_t\Vert_{L^2}\\
  &\leq C\Vert\theta_t\Vert_{L^2}^{\frac{1}{2}}\Vert\theta_t\Vert_{H^1}^{\frac{1}{2}}
   \Vert\nabla\theta^{\beta+1}\Vert_{L^2}^{\frac{1}{2}}
   \Vert\nabla^2\theta^{\beta+1}\Vert_{L^2}^{\frac{1}{2}}\Vert\nabla\theta_t\Vert_{L^2}\\
  &\leq \frac{1}{8}\Vert\theta^{\frac{\beta}{2}}\nabla\theta_t\Vert_{L^2}^2
   +C\Vert\theta_t\Vert_{L^2}^2
   \big(\Vert\nabla\theta^{\beta+1}\Vert_{L^2}
   \Vert\nabla^2\theta^{\beta+1}\Vert_{L^2}
   +\Vert\nabla\theta^{\beta+1}\Vert_{L^2}^2
   \Vert\nabla^2\theta^{\beta+1}\Vert_{L^2}^2\big),
  ~~~~~~~~~~~~~
\end{align*}
where we have used Gagliardo-Nirenberg's inequality, \eqref{17}, Lemma \ref{L08}, Lemma \ref{L2} and Lemma \ref{L3}.

In view of Lemma \ref{L08},
\begin{align}\label{82}
  \Vert\theta_t\Vert_{L^2}^2
  &=\int_{\{x\in\Omega|\rho(x,t)\leq c_0\}}\theta_t^2dx
  +\int_{\{x\in\Omega|\rho(x,t)> c_0\}}\theta_t^2dx\nonumber\\
  &\leq \big(\int_{\{x\in\Omega|\rho(x,t)\leq c_0\}} dx\big)^{\frac{1}{2}}
   \big(\int_{\{x\in\Omega|\rho(x,t)\leq c_0\}}\theta_t^4dx\big)^{\frac{1}{2}}
   +\frac{1}{c_0}\Vert\sqrt{\rho}\theta_t\Vert_{L^2}^2\nonumber\\
  &\leq \big|\{x\in\Omega|\rho_0(x)\leq c_0\}\big|^{\frac{1}{2}}
   C(\Vert\sqrt{\rho}\theta_t\Vert_{L^2}^2+\Vert\nabla\theta_t\Vert_{L^2}^2)
   +\frac{1}{c_0}\Vert\sqrt{\rho}\theta_t\Vert_{L^2}^2\nonumber\\
  &\leq C|V|^{\frac{1}{2}}
   (\Vert\sqrt{\rho}\theta_t\Vert_{L^2}^2+\Vert\nabla\theta_t\Vert_{L^2}^2)
   +\frac{1}{c_0}\Vert\sqrt{\rho}\theta_t\Vert_{L^2}^2,
\end{align}
owing to $\big|\{x\in\Omega|\rho(x,t)\leq c_0\}\big|=\big|\{x\in\Omega|\rho_0(x)\leq c_0\}\big|$ (see \cite[Theorem 2.1]{Lions}). Moreover, from \eqref{60} and Lemma \ref{L4}, we have
\begin{align*}
  \Vert\nabla^2\theta^{\beta+1}\Vert_{L^2}
  &\leq \Vert(\beta+1)\theta^{\beta}\nabla^2\theta
   +\beta(\beta+1)\theta^{\beta-1}\nabla\theta\otimes\nabla\theta\Vert_{L^2}\\
  &\leq C\Vert\theta\Vert_{L^{\infty}}^{\beta}\Vert\nabla^2\theta\Vert_{L^2}
   +C\Vert\nabla\theta^{\beta+1}\Vert_{L^4}\Vert\nabla\theta\Vert_{L^4}\\
  &\leq C\Vert\theta\Vert_{H^2}^{\beta}\Vert\nabla^2\theta\Vert_{L^2}
   +C\Vert\nabla\theta^{\beta+1}\Vert_{L^2}^{\frac{1}{2}}
   \Vert\nabla^2\theta^{\beta+1}\Vert_{L^2}^{\frac{1}{2}}
   \Vert\nabla\theta\Vert_{L^2}^{\frac{1}{2}}
   \Vert\nabla^2\theta\Vert_{L^2}^{\frac{1}{2}}\\
  &\leq \frac{1}{2}\Vert\nabla^2\theta^{\beta+1}\Vert_{L^2}
   +C\Vert\nabla^2\theta\Vert_{L^2}+C\Vert\nabla^2\theta\Vert_{L^2}^{\beta+1}\\
  &\leq \frac{1}{2}\Vert\nabla^2\theta^{\beta+1}\Vert_{L^2}
   +CM^{\frac{\beta+1}{2}}.
\end{align*}
Thus,
\begin{align*}
  I_{10}\leq \frac{1}{8}\Vert\theta^{\frac{\beta}{2}}\nabla\theta_t\Vert_{L^2}^2
   +C|V|^{\frac{1}{2}}M^{\beta+1}\Vert\theta^{\frac{\beta}{2}}\nabla\theta_t\Vert_{L^2}^2
   +\frac{C}{c_0}\big(\Vert\nabla\theta^{\beta+1}\Vert_{L^2}^2
   +\Vert\nabla^2\theta^{\beta+1}\Vert_{L^2}^2\big)
   \Vert\sqrt{\rho}\theta_t\Vert_{L^2}^2.
\end{align*}
Substituting the estimates of $I_i(i=5, \cdots, 10)$ into \eqref{35},  letting
\begin{align}\label{63}
 C\alpha\leq\frac{1}{8}, ~~~~~ C|V|^{\frac{1}{2}}M^{\beta+1}\leq\frac{1}{8},
\end{align}
we obtain
\begin{align}\label{25}
  \frac{d}{dt}\big((1+t^2)\Vert\sqrt{\rho}\theta_t\Vert_{L^2}^2\big)
   +(1+t^2)\Vert\theta^{\frac{\beta}{2}}\nabla\theta_t\Vert_{L^2}^2
  \leq C\mathcal{A}(t)(1+t^2)\Vert\sqrt{\rho}\theta_t\Vert_{L^2}^2
   +C\mathcal{B}(t),
\end{align}
where
\begin{align}\label{37}
  \left\{
    \begin{array}{ll}
        \mathcal{A}(t)
  \triangleq\Vert\nabla u\Vert_{H^1}^2
   +\Vert\nabla\theta\Vert_{H^1}^2
   +\frac{1}{c_0}\Vert\nabla\theta^{\beta+1}\Vert_{H^1}^2,\\[2mm]
  \mathcal{B}(t)
  \triangleq(1+t^2)\big((\Vert\nabla u\Vert_{H^1}^2+\Vert\sqrt{\rho}u_t\Vert_{L^2}^2)
  \Vert\nabla\theta\Vert_{H^1}^2
  +\Vert(\sqrt{\rho}u_t,\nabla u_t)\Vert_{L^2}^2\big)
  +t\Vert\sqrt{\rho}\theta_t\Vert_{L^2}^2.
    \end{array}
  \right.
\end{align}
It follows from Lemma \ref{L05}, Lemmas \ref{L2}-\ref{L4}  and \eqref{22} that
\begin{align*}
  \int_0^t\mathcal{A}(s)ds
  &\leq C\int_0^t\big(\Vert\nabla u\Vert_{H^1}^2
   +\Vert\nabla^2\theta\Vert_{L^2}^2
   +\frac{1}{c_0}\Vert\nabla^2\theta^{\beta+1}\Vert_{L^2}^2\big)ds
  \leq \frac{C}{c_0},
\end{align*}
\begin{align*}
  \int_0^t\mathcal{B}(s)ds
  &\leq C\sup_{0\leq t\leq T}\big((1+t^2)(\Vert\nabla u\Vert_{H^1}^2
    +\Vert\sqrt{\rho}u_t\Vert_{L^2}^2)\big)
   \int_0^t\Vert\nabla\theta\Vert_{H^1}^2ds\\
  &\quad+C\int_0^t\big((1+s^2)\Vert(\sqrt{\rho}u_t,\nabla u_t)\Vert_{L^2}^2
   +s\Vert\sqrt{\rho}\theta_t\Vert_{L^2}^2\big)ds\\
  &\leq C.
\end{align*}
Next, \eqref{25}, Gr\"{o}nwall's inequality and straight calculations show that
\begin{align}\label{26}
  &(1+t^2)\Vert\sqrt{\rho}\theta_t\Vert_{L^2}^2
   +\int_0^t(1+s^2)\Vert\theta^{\frac{\beta}{2}}\nabla\theta_t\Vert_{L^2}^2ds\nonumber\\
  &\leq C\big(\Vert\sqrt{\rho_0}\theta_{0t}\Vert_{L^2}^2+\int_0^t\mathcal{B}(s)ds\big)
   \exp\big(C\int_0^t\mathcal{A}(s)ds\big)\nonumber\\
  &\leq C\exp\big(\frac{C}{c_0}\big)
\end{align}
owing to the simple fact that
\begin{align*}
  \Vert\sqrt{\rho_0}\theta_{0t}\Vert_{L^2}
  \leq C\Vert(\sqrt{\rho_0}u_0\cdot\nabla\theta_0, g_2)\Vert_{L^2}
  \leq C(\Vert u_0\Vert_{H^1}\Vert\nabla\theta_0\Vert_{H^1}+\Vert g_2\Vert_{L^2})
  \leq C.
\end{align*}
Assertion follows now from \eqref{26}.
\end{proof}

\vspace{0.2cm}

\noindent
\textbf{Proof of Proposition \ref{P0}:}

First, Lemma \ref{L08}, Lemmas \ref{L2}-\ref{L5}, \eqref{21}\eqref{58} and standard calculations give that
\begin{align}\label{27}
  &\sup_{0\leq t\leq T}\Vert\theta\Vert_{H^2}^2
   +\int_0^T\big((1+t)\Vert\theta_t\Vert_{L^2}^2
   +(1+t^2)\Vert\nabla\theta_t\Vert_{L^2}^2\big)dt\nonumber\\
  &\leq C\sup_{0\leq t\leq T}(\Vert\rho\theta^{\beta+2}\Vert_{L^1}+\Vert\nabla\theta\Vert_{H^1}^2)
   +\int_0^T\big((1+t)\Vert\sqrt{\rho}\theta_t\Vert_{L^2}^2
   +(1+t^2)\Vert\nabla\theta_t\Vert_{L^2}^2\big)dt\nonumber\\
  &\leq C\exp\big(\frac{C}{c_0}\big),
\end{align}
provided
\begin{align}
\alpha\leq\min\{1, M^{-1}, (8C)^{-1}\}, ~~~~~~
C|V|^{\frac{1}{2}}M^{\beta+1}\leq\frac{1}{8}.
\end{align}
Then, choosing $c_0, M$ and $\epsilon_0$ such that
\begin{align}\label{28}
\left\{
  \begin{array}{ll}
    c_0=\min\{(2C\beta+2C+2)^{-1}, (\log(8C^{\beta+2}))^{-1}\}, \\[2mm]
   M=C\exp\big(\frac{C}{c_0}\big), \\[2mm]
   \epsilon_0=\min\{1, M^{-1}, (8C)^{-1}\}.
  \end{array}
\right.
\end{align}
according to $\eqref{79}$, we have
\begin{align}\label{64}
 C|V|^{\frac{1}{2}}M^{\beta+1}
 &\leq C^{\beta+2}\exp\big(\frac{1}{c_0}(-\frac{1}{2c_0}+C\beta+C)\big)\nonumber\\
 &\leq C^{\beta+2}\exp(-\frac{1}{c_0})\nonumber\\
 &\leq C^{\beta+2}\exp(-\log(8C^{\beta+2}))\nonumber\\
 &=\frac{1}{8}.
\end{align}
Thus, we directly obtain \eqref{65} from \eqref{27}-\eqref{64}. Therefore, Proposition \ref{P0} is valid.
\hfill $\qedsymbol$

In the end of this subsection, we introduce a higher-order estimate lemma, which is used to extend local strong solution to global ones.

\begin{lemma}\label{L8}
Under the assumptions of Proposition \ref{P0}, it holds that
\begin{align}\label{74}
  \sup_{0\leq t\leq T}\Vert\rho_t\Vert_{L^q}
   +\int_0^T\Vert(\nabla u, \frac{P}{\mu(\theta)})\Vert_{H^2}^2dt
  \leq  C_0, ~~~~~~~~
  \int_0^T\Vert\nabla^3\theta\Vert_{L^2}^2dt
  \leq  C_0(T),
\end{align}
where $C_0(T)$ is a positive constant depending on the initial data and the time $T$.
\end{lemma}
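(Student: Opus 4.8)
The strategy is to promote the a priori estimates of Lemmas \ref{L1}--\ref{L5} and Proposition \ref{P0} to the higher-order bounds needed for continuation, in three blocks: the bound on $\rho_t$, the time-integral bound on $\Vert(\nabla u,P/\mu(\theta))\Vert_{H^2}$, and the time-integral bound on $\Vert\nabla^3\theta\Vert_{L^2}$. I would begin with $\rho_t$. From the continuity equation $\rho_t=-u\cdot\nabla\rho$ (using $\dive u=0$) one has pointwise $\Vert\rho_t\Vert_{L^q}\leq\Vert u\Vert_{L^\infty}\Vert\nabla\rho\Vert_{L^q}$. By Lemma \ref{L3} we already control $\sup_t\Vert\nabla\rho\Vert_{L^q}\leq C_0$, and $\Vert u\Vert_{L^\infty}\leq C\Vert u\Vert_{H^2}\leq C$ by Lemma \ref{L3} (the bound on $\Vert\nabla^2u\Vert_{L^2}$ in \eqref{80}) together with Sobolev embedding in 2D; hence $\sup_{0\leq t\leq T}\Vert\rho_t\Vert_{L^q}\leq C_0$, with a constant independent of $T$.

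Next I would treat $\int_0^T\Vert(\nabla u,P/\mu(\theta))\Vert_{H^2}^2\,dt$. The tool is part (2) of Lemma \ref{L03} applied to the Stokes system \eqref{70} with $F=-\rho u_t-\rho u\cdot\nabla u$: this gives
\begin{align*}
  \Vert u\Vert_{H^3}+\Vert\tfrac{P}{\mu(\theta)}\Vert_{H^2}
  \leq C\Vert F\Vert_{H^1}\big(1+\Vert\nabla\mu(\theta)\Vert_{H^1}^{\frac{k}{k-2}+2}\big).
\end{align*}
Since $\nabla\mu(\theta)=\alpha\theta^{\alpha-1}\nabla\theta$ and $\Vert\theta\Vert_{H^2}\leq C$ by Proposition \ref{P0}, one has $\Vert\nabla\mu(\theta)\Vert_{H^1}\leq C\alpha\Vert\nabla\theta\Vert_{H^1}+C\alpha\Vert\nabla^2\theta\Vert_{L^2}+\text{l.o.t.}\leq C$, so the prefactor is bounded. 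It then remains to bound $\int_0^T\Vert F\Vert_{H^1}^2\,dt$. Expanding $\nabla F$ produces terms $\rho\nabla u_t$, $\nabla\rho\,u_t$, $\rho\nabla u\nabla u$, $\rho u\nabla^2u$, $\nabla\rho\,u\nabla u$; using Gagliardo--Nirenberg, the 2D Sobolev embeddings, Lemma \ref{L3} (for $\Vert\nabla\rho\Vert_{L^q}$, $\Vert\nabla^2u\Vert_{L^2}$, $\int_0^T\Vert\nabla u_t\Vert_{L^2}^2\,dt$ with the $(1+t^2)$ weight) and Lemma \ref{L2}, each term is integrable in time with a $T$-independent bound — the weight $1+t^2$ from \eqref{80} and \eqref{9} being precisely what compensates the absence of exponential decay at this stage. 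This yields the asserted $C_0$ bound.

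Finally, for $\int_0^T\Vert\nabla^3\theta\Vert_{L^2}^2\,dt$ I would work from the reformulated heat equation \eqref{69} for $\theta^{\beta+1}$. Differentiating $-\Delta\theta^{\beta+1}=(\beta+1)(2\mu(\theta)|D(u)|^2-\rho\theta_t-\rho u\cdot\nabla\theta)$ and invoking Lemma \ref{L06} with $k=1$ gives $\Vert\nabla^3\theta^{\beta+1}\Vert_{L^2}\leq C(\Vert\nabla\Delta\theta^{\beta+1}\Vert_{L^2}+\Vert\nabla\theta^{\beta+1}\Vert_{L^2})\leq C\Vert\nabla(2\mu|D(u)|^2-\rho\theta_t-\rho u\cdot\nabla\theta)\Vert_{L^2}+C$; estimating $\nabla(\rho\theta_t)$ by $\Vert\nabla\rho\Vert_{L^q}\Vert\theta_t\Vert_{L^{2q/(q-2)}}+\Vert\nabla\theta_t\Vert_{L^2}$, and the other two groups of terms by Gagliardo--Nirenberg using $\Vert u\Vert_{H^3}$ (already controlled above) and $\Vert\nabla^2\theta\Vert_{L^2}\leq C$, one reduces to $\int_0^T(\Vert\nabla\theta_t\Vert_{L^2}^2+\Vert u\Vert_{H^3}^2+\cdots)\,dt$, which is finite by Proposition \ref{P0}, Lemma \ref{L3} and the previous block. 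One then passes from $\nabla^3\theta^{\beta+1}$ back to $\nabla^3\theta$ via the chain rule as in \eqref{58} (writing $\theta^\beta\nabla^3\theta$ in terms of $\nabla^3\theta^{\beta+1}$ and lower-order products of $\nabla\theta,\nabla^2\theta$) together with Lemma \ref{L05}; here the power-law factor $\theta^\beta$ may force a $T$-dependent constant because controlling the lower-order products over all of $[0,T]$ without a decay mechanism is not free — this is why the third estimate is only claimed with $C_0(T)$ rather than $C_0$. The main obstacle throughout is this last step: carefully tracking the nonlinear chain-rule terms generated by the degenerate power laws $\theta^\alpha,\theta^\beta$ and verifying that the smallness of $\alpha$ together with the weighted bounds of Proposition \ref{P0} absorbs them, while accepting the unavoidable $T$-dependence in the top-order temperature estimate.
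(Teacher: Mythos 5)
Your proposal is correct and follows essentially the same three-step route as the paper's proof: the transport identity $\rho_t=-u\cdot\nabla\rho$ combined with $\Vert u\Vert_{L^\infty}\leq C\Vert u\Vert_{H^2}$, the $H^3$ Stokes estimate of Lemma \ref{L03}(2) with $F=-\rho u_t-\rho u\cdot\nabla u$, and $H^3$ elliptic regularity for the Neumann problem for the temperature. The only cosmetic difference is in the last step, where the paper divides the equation by $\theta^{\beta}$ and applies the Neumann $H^3$ estimate to $\theta$ directly (the $T$-dependence coming from the additive constant contributed by bounded but non-decaying lower-order terms such as $\Vert\rho\theta^{\beta+2}\Vert_{L^1}$), whereas you apply it to $\theta^{\beta+1}$ and convert back via the chain rule --- both routes work with the same ingredients.
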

\begin{proof}
Before the proof begins, let us introduce the symbol ``$\lesssim$''. If $A\lesssim B$, it means that there exists a positive constant $C_0$ such that $A\leq C_0B$.

\underline{step 1}.  It follows from $\eqref{1}_1$ and Lemmas \ref{L2}-\ref{L3} that
\begin{align}\label{72}
  \Vert\rho_t\Vert_{L^q}
  =\Vert u\cdot\nabla\rho\Vert_{L^q}
  \leq\Vert u\Vert_{L^{\infty}}\Vert\nabla\rho\Vert_{L^q}
  \leq C\Vert u\Vert_{H^2}\Vert\nabla\rho\Vert_{L^q}
  \leq C_0.
\end{align}

\underline{step 2}. Applying the standard $H^3$-estimates (Lemma \ref{L03}) to the Stokes equations \eqref{70} yields that
\begin{align}\label{71}
  &\Vert u\Vert_{H^3}+\Vert\frac{P}{\mu(\theta)}\Vert_{H^2}\nonumber\\
  &\lesssim \Vert\rho u_t+\rho u\cdot\nabla u\Vert_{H^1}\nonumber\\
  &\lesssim \Vert\sqrt{\rho} u_t\Vert_{L^2}+\Vert u\Vert_{L^4}\Vert\nabla u\Vert_{L^4}
   +\Vert\nabla\rho\Vert_{L^q}\Vert u_t\Vert_{L^{\frac{2q}{q-2}}}
   +\Vert\nabla u_t\Vert_{L^2}\nonumber\\
  &\quad +\Vert\nabla\rho\Vert_{L^q}\Vert u\Vert_{L^{\infty}}\Vert\nabla u\Vert_{L^{\frac{2q}{q-2}}}
   +\Vert\nabla u\Vert_{L^4}^2+\Vert u\Vert_{L^{\infty}}\Vert\nabla^2 u\Vert_{L^2}\nonumber\\
  &\lesssim \Vert\sqrt{\rho} u_t\Vert_{L^2}
   +\Vert\nabla u\Vert_{H^1}+\Vert\nabla u_t\Vert_{L^2},
\end{align}
where we have used Lemma \ref{L08} and Lemmas \ref{L1}-\ref{L5}.

\underline{step 3}. Next, we rewrite \eqref{69} as
\begin{align*}
\left\{\begin{array}{l}
  -\Delta\theta=\beta\theta^{-1}\nabla\theta\cdot\nabla\theta
  +\theta^{-\beta}(2\mu(\theta)|D(u)|^2-\rho\theta_t-\rho u\cdot\nabla\theta),
   ~~~~ \text{in} ~\Omega,\\
  \frac{\partial \theta}{\partial\mathbf{n}}=0,
  ~~~~~~~~~~~~~~~~~~~~~~~~~~~~~~~~~~~~~~~~~~~~~~~~~~~~~~~~~ \, ~~~~~~~~~~~~~ \text{on} ~\partial\Omega.
\end{array}\right.
\end{align*}
Hence, the standard $H^3$-estimate for the Neumann problem to the elliptic equation\cite{Lieberman,Zhong2020} gives rise to
\begin{align}\label{73}
  \Vert\theta\Vert_{H^3}
  &\lesssim\Vert\beta\theta^{-1}\nabla\theta\cdot\nabla\theta
  +\theta^{-\beta}(2\mu(\theta)|D(u)|^2-\rho\theta_t-\rho u\cdot\nabla\theta)\Vert_{H^1}
  +\Vert\theta\Vert_{H^1}\nonumber\\
  &\lesssim\Vert\nabla\theta\Vert_{L^4}^2
  +\Vert\nabla u\Vert_{L^4}^2+\Vert\sqrt{\rho}\theta_t\Vert_{L^2}
  +\Vert u\Vert_{L^{\infty}}\Vert\nabla\theta\Vert_{L^2}
  +\Vert\nabla\theta\Vert_{L^6}^3+\Vert\nabla^2\theta\Vert_{L^4}\Vert\nabla\theta\Vert_{L^4}\nonumber\\
  &\quad+\Vert\nabla\theta\Vert_{L^6}\Vert\nabla u\Vert_{L^6}^2
  +\Vert\nabla^2u\Vert_{L^4}\Vert\nabla u\Vert_{L^4}
  +\Vert\nabla\theta\Vert_{L^4}\Vert\theta_t\Vert_{L^4}
  +\Vert\nabla\rho\Vert_{L^q}\Vert\theta_t\Vert_{L^{\frac{2q}{q-2}}}
  +\Vert\nabla\theta_t\Vert_{L^2}\nonumber\\
  &\quad+\Vert u\Vert_{L^{\infty}}\Vert\nabla\theta\Vert_{L^4}^2
  +\Vert\nabla\rho\Vert_{L^q}\Vert u\Vert_{L^{\infty}}\Vert\nabla\theta\Vert_{L^{\frac{2q}{q-2}}}
  +\Vert u\Vert_{L^{\infty}}\Vert\nabla^2\theta\Vert_{L^2}+\Vert\theta\Vert_{H^1}\nonumber\\
  &\lesssim\Vert\nabla\theta\Vert_{H^1}^2
  +\Vert\nabla u\Vert_{H^1}^2+\Vert\sqrt{\rho}\theta_t\Vert_{L^2}+\Vert\nabla\theta\Vert_{L^2}
  +(\Vert\nabla^2\theta\Vert_{L^2}^{\frac{1}{2}}\Vert\nabla^3\theta\Vert_{L^2}^{\frac{1}{2}}
   +\Vert\nabla^2\theta\Vert_{L^2})\Vert\nabla\theta\Vert_{H^1}\nonumber\\
  &\quad +(\Vert\nabla^2u\Vert_{L^2}^{\frac{1}{2}}\Vert\nabla^3u\Vert_{L^2}^{\frac{1}{2}}
   +\Vert\nabla^2u\Vert_{L^2})\Vert\nabla u\Vert_{H^1}
  +\Vert\nabla\theta_t\Vert_{L^2}+\Vert\nabla\theta\Vert_{H^1}+\Vert\theta\Vert_{H^1}\nonumber\\
  &\lesssim\frac{1}{2}\Vert\nabla^3\theta\Vert_{L^2}+\Vert\nabla\theta\Vert_{H^1}
  +\Vert\nabla u\Vert_{H^1}+\Vert\sqrt{\rho}\theta_t\Vert_{L^2}+\Vert\nabla^3u\Vert_{L^2}
  +\Vert\nabla\theta_t\Vert_{L^2}+\Vert\rho\theta^{\beta+2}\Vert_{L^1}\nonumber\\
  &\lesssim\frac{1}{2}\Vert\nabla^3\theta\Vert_{L^2}
  +\Vert(\nabla u,\nabla\theta)\Vert_{H^1}
  +\Vert(\sqrt{\rho}u_t,\sqrt{\rho}\theta_t,\nabla u_t,\nabla\theta_t)\Vert_{L^2}+C,
\end{align}
where one has used Lemma \ref{L08}, Lemmas \ref{L1}-\ref{L5} and \eqref{71}.

Combining \eqref{72}, \eqref{71} with \eqref{73} gives \eqref{74} and finishes the proof of Lemma \ref{L8}.
\end{proof}

\subsection{Time-Weighted Estimates}

In this subsection,  our main effort is focused on the time-weighted estimates of the solution, with the aim of obtaining the decay properties of $(u,\theta)$.

Firstly, collecting Lemmas \ref{L1}-\ref{L8}, we obtain

\begin{Proposition}\label{P1}
Under the assumptions of Proposition \ref{P0},  $\forall \, (x,t)\in\Omega\times[0,T]$, it holds that
\begin{align}
  &\qquad\qquad\qquad 0\leq\rho(x,t)\leq\tilde{\rho}, ~~~~~~~~~~
  \theta(x,t)\geq \underline{\theta},\label{66}\\
  &\sup_{0\leq t\leq T}\big(\Vert\rho\Vert_{W^{1,q}}+\Vert\rho_t\Vert_{L^q}
  +\Vert(u, \theta)\Vert_{H^2}+\Vert(\sqrt{\rho}u_t, \sqrt{\rho}\theta_t)\Vert_{L^2}\big)\nonumber\\
  &\quad+\int_0^T\Vert(\nabla u,\frac{P}{\mu(\theta)})\Vert_{H^2}^2+
  \Vert(u_t, \nabla\theta, \theta_t)\Vert_{H^1}^2dt\leq C_0,\\
  &\qquad\qquad\qquad\quad\int_0^T\Vert\nabla^3\theta\Vert_{L^2}^2dt\leq C_0(T).
\end{align}
\end{Proposition}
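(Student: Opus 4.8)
The statement is a consolidation of the a priori bounds already established, so the plan is not to prove anything new but to read off each assertion from the appropriate result among Lemmas \ref{L1}--\ref{L8} and Proposition \ref{P0}, keeping careful track of which constants are time-independent. Since the standing hypothesis here is exactly that of Proposition \ref{P0} --- in particular the bootstrap bound \eqref{60}, together with $\alpha\le\epsilon_0$ and the fixed choices \eqref{28} of $c_0,M,\epsilon_0$ --- all of Lemmas \ref{L1}--\ref{L8} are in force. Under \eqref{28} the quantities $M$ and $C\exp(C/c_0)$ appearing in those lemmas are fixed constants of admissible type, so every bound written as $C$, $M$ or $C\exp(C/c_0)$ collapses into one of the form $C_0$ (which, as declared before Proposition \ref{P0}, may in addition depend on $\|\rho_0\|_{W^{1,q}}$, $c_0$ and $V$). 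I would record this observation once at the outset.

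Next I would go through the conclusion line by line. The pointwise bounds \eqref{66} are precisely \eqref{7}--\eqref{8} of Lemma \ref{L1}. For the density, $\sup_t\|\rho\|_{W^{1,q}}\le C\|\rho_0\|_{W^{1,q}}\le C_0$ is Lemma \ref{L3} and $\sup_t\|\rho_t\|_{L^q}\le C_0$ is \eqref{72}--\eqref{74} of Lemma \ref{L8}. For the velocity, $\sup_t\|u\|_{H^1}^2\le C$ is contained in \eqref{9} and $\sup_t\|\nabla^2u\|_{L^2}^2\le C$ in \eqref{80} (discard the weight $(1+t^2)\ge1$), whence $\sup_t\|u\|_{H^2}\le C_0$; $\sup_t\|\theta\|_{H^2}\le C_0$ is the conclusion \eqref{65} of Proposition \ref{P0}. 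The weighted quantities $\sup_t\|\sqrt{\rho}u_t\|_{L^2}$ and $\sup_t\|\sqrt{\rho}\theta_t\|_{L^2}$ come from \eqref{80} and from Lemma \ref{L5}, again dropping the factor $(1+t^2)$. For the time integrals: $\int_0^T\|(\nabla u,\frac{P}{\mu(\theta)})\|_{H^2}^2\,dt\le C_0$ is \eqref{74}; $\int_0^T\|u_t\|_{H^1}^2\,dt\le C_0$ follows from $\int_0^T\|\nabla u_t\|_{L^2}^2\,dt\le\int_0^T(1+t^2)\|\nabla u_t\|_{L^2}^2\,dt\le C$ of \eqref{80} together with Poincar\'{e}'s inequality, since $u_t\in H_0^1$; $\int_0^T\|\nabla\theta\|_{H^1}^2\,dt\le C_0$ combines \eqref{22} with $\int_0^T\|\nabla\theta^{\beta+1}\|_{H^1}^2\,dt\le C$ from Lemma \ref{L4}, using $\theta\ge\underline{\theta}$ to control $\|\nabla\theta\|_{L^2}$ by $\|\nabla\theta^{\beta+1}\|_{L^2}$; and $\int_0^T\|\theta_t\|_{H^1}^2\,dt\le C_0$ is contained in \eqref{65} (again $(1+t),(1+t^2)\ge1$).

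The only estimate carrying genuine $T$-dependence is $\int_0^T\|\nabla^3\theta\|_{L^2}^2\,dt$, which I would obtain by squaring and integrating the pointwise bound \eqref{73}: every term on its right-hand side other than the additive constant lies in $L^2(0,T)$ with norm $\le C_0$ by the estimates just assembled --- for instance $\int_0^T\|\sqrt{\rho}u_t\|_{L^2}^2\,dt\le C_0$ because $\sup_t(1+t^2)\|\sqrt{\rho}u_t\|_{L^2}^2\le C$ and $(1+t^2)^{-1}\in L^1(0,\infty)$ --- while the constant term contributes $CT$, so that $\int_0^T\|\nabla^3\theta\|_{L^2}^2\,dt\le C_0+CT=:C_0(T)$. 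Since the analytic content has already been carried out in the lemmas, there is no genuine obstacle here; the only delicate point is the bookkeeping --- checking that every bound issued by Lemmas \ref{L2}--\ref{L5} and \ref{L8}, whether it reads $C$, $M$ or $C\exp(C/c_0)$, is indeed absorbed into $C_0$ under the fixed choice \eqref{28}, and that the lone non-decaying term in \eqref{73} is the unique source of the $T$-dependence in the final estimate.
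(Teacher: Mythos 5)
Your proposal is correct and follows essentially the same route as the paper, whose proof of Proposition \ref{P1} is precisely the one-line statement that it is obtained by collecting Lemmas \ref{L1}--\ref{L8}; you have simply made the bookkeeping explicit (dropping the time weights $(1+t)$, $(1+t^2)\ge 1$, absorbing $M$ and $C\exp(C/c_0)$ into $C_0$ via \eqref{28}, and isolating the additive constant in \eqref{73} as the sole source of the $T$-dependence in $\int_0^T\Vert\nabla^3\theta\Vert_{L^2}^2\,dt$). No gaps.
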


We then derive the following decay estimates on the  velocity.
\begin{lemma}\label{L6}
Under the assumptions of Proposition \ref{P0}, it holds that
\begin{align}\label{42}
  \sup_{0\leq t\leq T}\big(e^{\sigma_1 t}(\Vert u\Vert_{H^2}^2+\Vert\sqrt{\rho} u_t\Vert_{L^2}^2
   +\Vert P\Vert_{H^1}^2)\big)
  +\int_0^Te^{\sigma_1 t}\Vert(\nabla u, u_t)\Vert_{H^1}^2dt\leq C_0,
\end{align}
where $\displaystyle\sigma_1\triangleq\frac{\pi^2\underline{\theta}^{\alpha}}{\tilde{\rho}d^2}$ and $d=\text{diam}(\Omega)\triangleq\sup\{|x-y| | x,y\in\Omega\}$.
\end{lemma}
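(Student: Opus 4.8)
The plan is to push the exponential weight $e^{\sigma_1 t}$ through exactly the energy chain already used for the time‑weighted bounds of Lemmas \ref{L2}--\ref{L3}, adding one extra level of lower‑order feedback at each stage so that the spurious $\sigma_1$‑terms created by differentiating the weight never reach a Gr\"onwall exponent. \textbf{Step 1 (basic energy).} Since $\mathrm{div}\,u=0$ and $u|_{\partial\Omega}=0$ we have $\int|D(u)|^2\,dx=\tfrac12\Vert\nabla u\Vert_{L^2}^2$, so \eqref{10} together with $\mu(\theta)\ge\underline{\theta}^{\alpha}$ (from \eqref{8}) and the Poincar\'e inequality $\Vert u\Vert_{L^2}\le\frac{d}{\pi}\Vert\nabla u\Vert_{L^2}$ yields
\begin{align*}
  \frac{d}{dt}\Vert\sqrt{\rho}u\Vert_{L^2}^2+\underline{\theta}^{\alpha}\Vert\nabla u\Vert_{L^2}^2+\sigma_1\Vert\sqrt{\rho}u\Vert_{L^2}^2\le0,\qquad \sigma_1=\frac{\pi^2\underline{\theta}^{\alpha}}{\tilde{\rho}d^2}.
\end{align*}
Multiplying by $e^{\sigma_1 t}$ the last term cancels and integration gives
\begin{align*}
  \sup_{0\le t\le T}e^{\sigma_1 t}\Vert\sqrt{\rho}u\Vert_{L^2}^2+\underline{\theta}^{\alpha}\int_0^Te^{\sigma_1 t}\Vert\nabla u\Vert_{L^2}^2\,dt\le\Vert\sqrt{\rho_0}u_0\Vert_{L^2}^2\le C_0 .
\end{align*}
The control of $\int_0^Te^{\sigma_1 t}\Vert\nabla u\Vert_{L^2}^2\,dt$ is the engine that swallows all the exponentially forced terms below.

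\textbf{Step 2 ($\Vert\nabla u\Vert_{L^2}$ level).} From \eqref{11}, using $\Vert\nabla u\Vert_{L^2}^2\le C$ (Lemma \ref{L2}) to bound the logarithm, $\frac{d}{dt}\int\mu(\theta)|D(u)|^2\,dx+\tfrac12\Vert\sqrt{\rho}u_t\Vert_{L^2}^2\le C\alpha\Vert\theta_t\Vert_{L^2}^2\Vert\nabla u\Vert_{L^2}^2+C\Vert\nabla u\Vert_{L^2}^4$. Multiplying by $e^{\sigma_1 t}$ produces a term $\sigma_1 e^{\sigma_1 t}\int\mu(\theta)|D(u)|^2\,dx$; rather than Gr\"onwalling it (which would cost a growing factor), I would set
\begin{align*}
  \Phi(t)=e^{\sigma_1 t}\int\mu(\theta)|D(u)|^2\,dx+\lambda\,e^{\sigma_1 t}\Vert\sqrt{\rho}u\Vert_{L^2}^2
\end{align*}
and use the Step‑1 inequality $\frac{d}{dt}(e^{\sigma_1 t}\Vert\sqrt{\rho}u\Vert_{L^2}^2)\le-\underline{\theta}^{\alpha}e^{\sigma_1 t}\Vert\nabla u\Vert_{L^2}^2$ together with $e^{\sigma_1 t}\int\mu(\theta)|D(u)|^2\,dx\le Ce^{\sigma_1 t}\Vert\nabla u\Vert_{L^2}^2$; for $\lambda$ large the negative term dominates the $\sigma_1$‑term, so
\begin{align*}
  \Phi'(t)+\tfrac12 e^{\sigma_1 t}\Vert\sqrt{\rho}u_t\Vert_{L^2}^2\le C\alpha\Vert\theta_t\Vert_{L^2}^2\,\Phi(t)+Ce^{\sigma_1 t}\Vert\nabla u\Vert_{L^2}^4 .
\end{align*}
Gr\"onwall applies: the coefficient obeys $\int_0^TC\alpha\Vert\theta_t\Vert_{L^2}^2\,dt\le 2C\alpha M\le C$ by \eqref{60} and $\alpha M\le1$ (which holds since $\epsilon_0\le M^{-1}$), while $\int_0^TCe^{\sigma_1 t}\Vert\nabla u\Vert_{L^2}^4\,dt\le C\sup_t\Vert\nabla u\Vert_{L^2}^2\int_0^Te^{\sigma_1 t}\Vert\nabla u\Vert_{L^2}^2\,dt\le C_0$ and $\Phi(0)\le C_0$. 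Hence $\sup_t e^{\sigma_1 t}\Vert\nabla u\Vert_{L^2}^2+\int_0^Te^{\sigma_1 t}\Vert\sqrt{\rho}u_t\Vert_{L^2}^2\,dt\le C_0$.

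\textbf{Step 3 ($\Vert\sqrt{\rho}u_t\Vert_{L^2}$ level) and conclusion.} I would repeat the same device one level higher: multiply \eqref{18} by $e^{\sigma_1 t}$, add $\Lambda\Phi$ with $\Lambda$ large, and use the dissipation $-\tfrac12 e^{\sigma_1 t}\Vert\sqrt{\rho}u_t\Vert_{L^2}^2$ inside $\Phi'$ to absorb the $\sigma_1 e^{\sigma_1 t}\Vert\sqrt{\rho}u_t\Vert_{L^2}^2$ coming from the weight. The Gr\"onwall coefficient $C(\Vert u\Vert_{H^2}^2+\Vert\nabla u\Vert_{L^2}^4+\alpha\Vert\theta_t\Vert_{H^1}^2)$ has bounded time integral by Proposition \ref{P1}, Lemma \ref{L2}, \eqref{60} and $\alpha M\le1$; the forcing $Ce^{\sigma_1 t}\Vert\nabla u\Vert_{L^2}^4+C\alpha(e^{\sigma_1 t}\Vert\nabla u\Vert_{L^2}^2)\Vert\theta_t\Vert_{H^1}^2$ is controlled by Steps 1--2 and \eqref{60}; and the initial value $\Vert\sqrt{\rho_0}u_{0t}\Vert_{L^2}^2\le C$ by the compatibility condition \eqref{62} (cf. Lemma \ref{L3}). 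This gives $\sup_t e^{\sigma_1 t}\Vert\sqrt{\rho}u_t\Vert_{L^2}^2+\int_0^Te^{\sigma_1 t}\Vert\nabla u_t\Vert_{L^2}^2\,dt\le C_0$, where we also used $u_t\in H_{0,\sigma}^1$ so that $\Vert D(u_t)\Vert_{L^2}^2=\tfrac12\Vert\nabla u_t\Vert_{L^2}^2$ and $\Vert u_t\Vert_{H^1}\le C\Vert\nabla u_t\Vert_{L^2}$. Feeding these into the Stokes estimate \eqref{15} (and $\Vert\sqrt{\rho}u\Vert_{L^4}^2\le C\Vert\nabla u\Vert_{L^2}$ from Lemma \ref{L04}) gives $\Vert u\Vert_{H^2}^2+\Vert P\Vert_{H^1}^2\le C\Vert\sqrt{\rho}u_t\Vert_{L^2}^2+C\Vert\nabla u\Vert_{L^2}^2$, from which all the supremum and time‑integral bounds asserted in \eqref{42} follow.

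\textbf{Main obstacle.} The delicate terms are the temperature feedbacks $C\alpha\Vert\theta_t\Vert_{L^2}^2\Vert\nabla u\Vert_{L^2}^2$ in \eqref{11} and $C\alpha\Vert\theta_t\Vert_{H^1}^2(\cdots)$ in \eqref{18}: under the exponential weight a naive estimate would require $\int e^{\sigma_1 t}\Vert\theta_t\Vert^2\,dt<\infty$, i.e. exponential decay of $\theta_t$, which is not available at this stage (and cannot match rate $\sigma_1$, since the temperature decays only at rate $\sigma_2\le\sigma_1$). The way out — and the place where the smallness of $\alpha$ is genuinely used, beyond Proposition \ref{P0} — is to keep these as Gr\"onwall \emph{coefficients} rather than forcing: their \emph{unweighted} integrals are $\lesssim\alpha M\le1$, so the Gr\"onwall exponential stays $O(1)$, while every genuinely exponentially forced term has the shape $e^{\sigma_1 t}\times O(\Vert\nabla u\Vert_{L^2}^2)$ and is absorbed by Step 1. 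The secondary, purely structural point, handled by the auxiliary functionals $\Phi$ and $\Lambda\Phi$, is that the $\sigma_1$‑terms created by $\frac{d}{dt}e^{\sigma_1 t}$ must never be fed into a Gr\"onwall exponent; they are instead canceled against the dissipation of the next lower‑order weighted energy.
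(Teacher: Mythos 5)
Your proof is correct and follows essentially the same three-level chain as the paper: the Poincar\'e-weighted basic energy identity that fixes $\sigma_1$, then the $\Vert\nabla u\Vert_{L^2}$ and $\Vert\sqrt{\rho}u_t\Vert_{L^2}$ levels via \eqref{11}/\eqref{43} and \eqref{18}, closed by the Stokes estimate \eqref{15}, with the temperature feedback kept as a Gr\"onwall coefficient of bounded unweighted integral. The only cosmetic difference is that you absorb the $\sigma_1 e^{\sigma_1 t}(\cdot)$ terms produced by differentiating the weight by adding a large multiple of the lower-level weighted energy, whereas the paper simply treats them as forcing whose time integral is already controlled by the preceding step --- both devices rest on the same bound $\int_0^T e^{\sigma_1 t}\Vert\nabla u\Vert_{L^2}^2\,dt\leq C_0$.
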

\begin{proof}
\underline{step 1}. Thanks to Poincar\'{e}'s inequality (see\cite{Galdi}, p.70) and \eqref{66}, one obtains
\begin{align*}
  \Vert\sqrt{\rho}u\Vert_{L^2}^2\leq\tilde{\rho}\frac{d^2}{\pi^2}\Vert\nabla u\Vert_{L^2}^2
  \leq\frac{2\tilde{\rho}d^2}{\pi^2\underline{\theta}^{\alpha}}
   \Vert\theta^{\frac{\alpha}{2}}D(u)\Vert_{L^2}^2.
\end{align*}
Multiplying \eqref{10} by $e^{\sigma_1 t}$, we find
\begin{align*}
  \frac{d}{dt}\big(e^{\sigma_1 t}\Vert\sqrt{\rho}u\Vert_{L^2}^2\big)
  +2e^{\sigma_1 t} \Vert\theta^{\frac{\alpha}{2}}D(u)\Vert_{L^2}^2\leq0,
\end{align*}
which means that
\begin{equation}\label{33}
  e^{\sigma_1 t}\Vert\sqrt{\rho} u\Vert_{L^2}^2
  +\int_0^t e^{\sigma_1 s}\Vert\nabla u\Vert_{L^2}^2ds\leq C_0.
\end{equation}

\underline{step 2}. Since \eqref{11} and Lemma \ref{L08}, we can check that
\begin{align}\label{43}
  &\frac{d}{dt}\int\mu(\theta)|D(u)|^2dx
   +\frac{1}{2}\Vert\sqrt{\rho}u_t\Vert_{L^2}^2\nonumber\\
  &\leq C\big(\Vert\theta_t\Vert_{L^2}^2+\Vert u\Vert_{L^4}^4\big)
   \Vert\nabla u\Vert_{L^2}^2\nonumber\\
  &\leq C\big(\Vert\sqrt{\rho}\theta_t\Vert_{L^2}^2+\Vert\nabla\theta_t\Vert_{L^2}^2
   +\Vert\nabla u\Vert_{L^2}^4\big)
   \Vert\nabla u\Vert_{L^2}^2.
\end{align}
Multiplying \eqref{43} by $e^{\sigma_1 t}$ yields
\begin{align*}
  &\frac{d}{dt}\big(e^{\sigma_1 t}\Vert\theta^{\frac{\alpha}{2}}D(u)\Vert_{L^2}^2\big)
   +\frac{1}{2}e^{\sigma_1 t}\Vert\sqrt{\rho}u_t\Vert_{L^2}^2\\
  &\leq C\big(\Vert\sqrt{\rho}\theta_t\Vert_{L^2}^2+\Vert\nabla\theta_t\Vert_{L^2}^2
   +\Vert\nabla u\Vert_{L^2}^4\big)
   e^{\sigma_1 t}\Vert\theta^{\frac{\alpha}{2}}D(u)\Vert_{L^2}^2
   +\sigma_1 e^{\sigma_1 t}\Vert\theta^{\frac{\alpha}{2}}D(u)\Vert_{L^2}^2.
\end{align*}
It follows from Gr\"{o}nwall's inequality, Proposition \ref{P1} and \eqref{33} that
\begin{align}\label{34}
  e^{\sigma_1 t}\Vert\nabla u\Vert_{L^2}^2
   +\int_0^te^{\sigma_1 s}\Vert\sqrt{\rho} u_t\Vert_{L^2}^2ds\leq C_0.
\end{align}

\underline{step 3}.
 we deduce from \eqref{18} and Lemma \ref{L2} that
\begin{align*}
  &\frac{d}{dt}\big(e^{\sigma_1 t}\Vert\sqrt{\rho}u_t\Vert_{L^2}^2\big)
  +e^{\sigma_1 t}\Vert\theta^{\frac{\alpha}{2}}D(u_t)\Vert_{L^2}^2\\
  &\leq C\big(\Vert u\Vert_{H^2}^2+\Vert\nabla u\Vert_{L^2}^4
    +\Vert\theta_t\Vert_{H^1}^2\big)
   e^{\sigma_1 t}\Vert\sqrt{\rho}u_t\Vert_{L^2}^2\\
  &\quad +Ce^{\sigma_1 t}\big(\Vert\sqrt{\rho}u_t\Vert_{L^2}^2
   +\Vert\nabla u\Vert_{L^2}^4
   +\Vert\nabla u\Vert_{L^2}^2\Vert\theta_t\Vert_{H^1}^2\big)\\
  &\leq C\big(\Vert\nabla u\Vert_{H^1}^2
    +\Vert(\sqrt{\rho}\theta_t,\nabla\theta_t)\Vert_{L^2}^2\big)
   e^{\sigma_1 t}\Vert\sqrt{\rho}u_t\Vert_{L^2}^2\\
  &\quad +Ce^{\sigma_1 t}\big(\Vert\sqrt{\rho}u_t\Vert_{L^2}^2
   +\Vert\nabla u\Vert_{L^2}^2\big)
   +C\sup_{0\leq t\leq T}(e^{\sigma_1 t}\Vert\nabla u\Vert_{L^2}^2)
   \Vert(\sqrt{\rho}\theta_t,\nabla\theta_t)\Vert_{L^2}^2.
\end{align*}
This together with Gr\"{o}nwall's inequality, \eqref{33}, \eqref{34} and Proposition \ref{P1} implies
\begin{align}\label{36}
  e^{\sigma_1 t}\Vert\sqrt{\rho} u_t\Vert_{L^2}^2
  +\int_0^te^{\sigma_1 s}\Vert\nabla u_t\Vert_{L^2}^2ds\leq C_0.
\end{align}
Therefore, we arrive at \eqref{42} from \eqref{33}-\eqref{36} and \eqref{15} immediately.
\end{proof}

Now we derive the following decay estimates on the temperature.
\begin{lemma}\label{L7}
Under the assumptions of Proposition \ref{P0}, it holds that
\begin{align}\label{40}
  \sup_{0\leq t\leq T}\big(e^{\sigma_2 t}
   (\Vert\theta-\frac{1}{\overline{\rho_0}|\Omega|}E_0\Vert_{H^2}^2
   +\Vert\sqrt{\rho}\theta_t\Vert_{L^2}^2)\big)
  +\int_0^Te^{\sigma_2 t}\Vert(\nabla\theta, \theta_t)\Vert_{H^1}^2dt\leq C_0,
\end{align}
where $\displaystyle \sigma_2\triangleq\frac{\pi^2}{\tilde{\rho}d^2}
\min\big\{\frac{1}{2}\underline{\theta}^{\beta}(1+\frac{\tilde{\rho}}{\overline{\rho_0}})^{-2}, \underline{\theta}^{\alpha}\big\}, d=\text{diam}(\Omega)\triangleq\sup\{|x-y| | x,y\in\Omega\}$,
and
$\displaystyle E_0=\int\rho_0(\theta_0+\frac{1}{2}|u_0|^2)dx$,
$\displaystyle \overline{\rho_0}=\frac{1}{|\Omega|}\int\rho_0dx$ are two positive constants.
\end{lemma}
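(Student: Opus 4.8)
The plan is to mimic the structure of Lemma \ref{L6}, working throughout with the shifted temperature $\vartheta\triangleq\theta-\tfrac{1}{\overline{\rho_0}|\Omega|}E_0$, which satisfies $\nabla\vartheta=\nabla\theta$, $\vartheta_t=\theta_t$, $\tfrac{\partial\vartheta}{\partial\mathbf{n}}=0$ on $\partial\Omega$, and $\rho\vartheta_t+\rho u\cdot\nabla\vartheta-\dive(\kappa(\theta)\nabla\vartheta)=2\mu(\theta)|D(u)|^2$. Two elementary identities drive the argument. Adding one half of \eqref{10} to the spatial integral of $\eqref{1}_3$ (all boundary integrals vanishing by \eqref{3}) gives $\tfrac{d}{dt}\int\rho(\theta+\tfrac12|u|^2)dx=0$, hence $\int\rho\vartheta\,dx=-\tfrac12\Vert\sqrt\rho u\Vert_{L^2}^2$, which decays like $e^{-\sigma_1 t}$ by \eqref{33}; and integrating $\eqref{1}_3$ gives $\int\rho\theta_t\,dx=2\int\mu(\theta)|D(u)|^2dx-\int\rho u\cdot\nabla\theta\,dx=O(e^{-\sigma_1 t/2})$ by Lemma \ref{L6} and Proposition \ref{P1}. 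Splitting $\vartheta$ and $\theta_t$ about their spatial means and applying Poincar\'e's inequality (with the sharp constant $d/\pi$) together with these two facts yields the weighted Poincar\'e inequalities
\begin{align*}
 \Vert\sqrt\rho\vartheta\Vert_{L^2}^2&\le\tfrac{2\tilde\rho d^2}{\pi^2}\big(1+\tfrac{\tilde\rho}{\overline{\rho_0}}\big)^{2}\Vert\nabla\theta\Vert_{L^2}^2+C_0\Vert\sqrt\rho u\Vert_{L^2}^4,\\
 \Vert\sqrt\rho\theta_t\Vert_{L^2}^2&\le\tfrac{2\tilde\rho d^2}{\pi^2}\big(1+\tfrac{\tilde\rho}{\overline{\rho_0}}\big)^{2}\Vert\nabla\theta_t\Vert_{L^2}^2+C_0\Big(\int\rho\theta_t\,dx\Big)^{2}.
\end{align*}
It is the constant here (together with $\underline\theta^\beta\le\kappa(\theta)$) that produces the entry $\tfrac12\underline\theta^\beta(1+\tfrac{\tilde\rho}{\overline{\rho_0}})^{-2}$ in $\sigma_2$; the second entry $\underline\theta^\alpha$ (i.e. $\sigma_2\le\sigma_1$) is forced because all source terms below are velocity quantities, which by Lemma \ref{L6} decay only at rate $\sigma_1$.

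\emph{Steps 1--2.} Testing the $\vartheta$-equation by $\vartheta$ yields $\tfrac12\tfrac{d}{dt}\Vert\sqrt\rho\vartheta\Vert_{L^2}^2+\int\kappa(\theta)|\nabla\theta|^2dx=2\int\mu(\theta)|D(u)|^2\vartheta\,dx$; bounding $\mu(\theta),\Vert\vartheta\Vert_{L^\infty}\le C_0$ by Proposition \ref{P1} and $H^2\hookrightarrow L^\infty$, inserting the first weighted Poincar\'e inequality, and using $\sigma_2\le\tfrac{\pi^2\underline\theta^\beta}{2\tilde\rho d^2}(1+\tfrac{\tilde\rho}{\overline{\rho_0}})^{-2}$ gives $\tfrac{d}{dt}(e^{\sigma_2 t}\Vert\sqrt\rho\vartheta\Vert_{L^2}^2)+e^{\sigma_2 t}\int\kappa(\theta)|\nabla\theta|^2dx\le C_0e^{\sigma_2 t}(\Vert\nabla u\Vert_{L^2}^2+\Vert\sqrt\rho u\Vert_{L^2}^4)$; integrating and using Lemma \ref{L6} and $\sigma_2\le\sigma_1$ gives $\sup_{0\le t\le T}e^{\sigma_2 t}\Vert\sqrt\rho\vartheta\Vert_{L^2}^2+\int_0^Te^{\sigma_2 t}\Vert\nabla\theta\Vert_{L^2}^2dt\le C_0$. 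Next I multiply \eqref{23} by $e^{\sigma_2 t}$, moving its total-derivative term into a modified energy $\mathcal E(t)$ with $|\mathcal E-\tfrac12\Vert\kappa(\theta)\nabla\theta\Vert_{L^2}^2|\le C_0\Vert\nabla u\Vert_{L^2}^2$. Testing the elliptic identity \eqref{69} against $\theta^{\beta+1}-\overline{\theta^{\beta+1}}$ gives $\Vert\kappa(\theta)\nabla\theta\Vert_{L^2}^2\le C\tilde\rho d^2\Vert\sqrt\rho\theta_t\Vert_{L^2}^2+C_0(\Vert\nabla u\Vert_{L^2}^2+\Vert\sqrt\rho u\Vert_{L^2}^2\Vert\nabla\theta\Vert_{H^1}^2)$ with $C$ absolute; since $\Vert\sqrt\rho\theta_t\Vert_{L^2}^2\le\underline\theta^{-\beta}\int\rho\kappa(\theta)|\theta_t|^2dx$, the only new term $\sigma_2\mathcal E\lesssim\sigma_2\Vert\kappa(\theta)\nabla\theta\Vert_{L^2}^2$ is, by the definition of $\sigma_2$ and the elementary bound $\tilde\rho=\Vert\rho_0\Vert_{L^\infty}\ge\overline{\rho_0}$ (so $(1+\tfrac{\tilde\rho}{\overline{\rho_0}})^{-2}\le\tfrac14$), absorbed into the dissipation $\int\rho\kappa(\theta)|\theta_t|^2dx$ of \eqref{23} with room to spare. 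A Gr\"onwall argument (coefficient $\int_0^T(\Vert u\Vert_{L^\infty}^2+\Vert\nabla u\Vert_{H^1}^2)dt\le C_0$ by Proposition \ref{P1}, other sources controlled as in Step 1) then yields $\sup_{0\le t\le T}e^{\sigma_2 t}\Vert\nabla\theta\Vert_{L^2}^2+\int_0^Te^{\sigma_2 t}\Vert\sqrt\rho\theta_t\Vert_{L^2}^2dt\le C_0$ (using $\Vert\nabla\theta\Vert_{L^2}\sim\Vert\kappa(\theta)\nabla\theta\Vert_{L^2}$ since $\underline\theta\le\theta\le C_0$).

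\emph{Steps 3--4.} For the pointwise decay of $\Vert\sqrt\rho\theta_t\Vert_{L^2}^2$ I multiply \eqref{35} by $e^{\sigma_2 t}$ and re-use the bounds on $I_5,\dots,I_{10}$ from Lemma \ref{L5} (the smallness of $\alpha$ and of $|V|$ that these require is already in force by \eqref{28}), keeping the coefficient $\mathcal A(t)$ of \eqref{37}, for which $\int_0^T\mathcal A\,dt\le C_0$. The weight-term $\sigma_2\Vert\sqrt\rho\theta_t\Vert_{L^2}^2$ is handled by working with the combined functional $e^{\sigma_2 t}\big(\Vert\sqrt\rho\theta_t\Vert_{L^2}^2+\lambda_1\mathcal E(t)+\lambda_2\Vert\sqrt\rho\vartheta\Vert_{L^2}^2\big)$ for suitable $\lambda_1,\lambda_2>0$: using $\Vert\sqrt\rho\theta_t\Vert_{L^2}^2\le\underline\theta^{-\beta}\int\rho\kappa(\theta)|\theta_t|^2dx$, the computation of Step 2, and the dissipation $\int\kappa(\theta)|\nabla\theta|^2dx$ coming from $\tfrac{d}{dt}\Vert\sqrt\rho\vartheta\Vert_{L^2}^2$, every new term is absorbed into the dissipations while the Gr\"onwall coefficient stays in $L^1(0,T)$ (one writes $\Vert\sqrt\rho\theta_t\Vert_{L^2}^2\le\Phi+\lambda_1C_0\Vert\nabla u\Vert_{L^2}^2$ to split off only sources that decay at rate $\sigma_1$). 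This gives $\sup_{0\le t\le T}e^{\sigma_2 t}\Vert\sqrt\rho\theta_t\Vert_{L^2}^2+\int_0^Te^{\sigma_2 t}\Vert\theta^{\beta/2}\nabla\theta_t\Vert_{L^2}^2dt\le C_0$, and the splitting \eqref{82} upgrades the dissipation integral to $\int_0^Te^{\sigma_2 t}\Vert\theta_t\Vert_{H^1}^2dt\le C_0$. Finally, applying the elliptic estimates \eqref{21} and \eqref{58} to \eqref{69}, $\Vert\nabla^2\theta\Vert_{L^2}\le C_0(\Vert\nabla u\Vert_{H^1}^2+\Vert\sqrt\rho\theta_t\Vert_{L^2}+\Vert\nabla\theta\Vert_{L^2}+\cdots)$, whose right side decays at rate at least $\sigma_2/2$ by Steps 1--3 and Lemma \ref{L6}, so $\sup_{0\le t\le T}e^{\sigma_2 t}\Vert\nabla^2\theta\Vert_{L^2}^2\le C_0$; combining with Steps 1--3 and $\Vert\vartheta\Vert_{L^2}\le C_0\Vert\nabla\theta\Vert_{L^2}+C_0e^{-\sigma_1 t}$ yields $\sup_{0\le t\le T}e^{\sigma_2 t}\Vert\vartheta\Vert_{H^2}^2\le C_0$, which is \eqref{40}.

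The hard part is Step 3: matching the sharp rate $\sigma_2$ with the portion of temperature dissipation that survives the nonlinear estimates of \eqref{35} — which is precisely why the lower-order dissipations from \eqref{10} and \eqref{23} must be brought in through the combined functional, and why the (trivial but essential) inequality $\tilde\rho\ge\overline{\rho_0}$, together with the weighted Poincar\'e inequality for $\theta_t$ whose proof needs $\int\rho\theta_t\,dx$ (not merely $\Vert\rho\theta_t\Vert_{L^1}$) to be exponentially small, is exactly what makes the absorptions close.
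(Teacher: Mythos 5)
Your proposal is correct and follows the same skeleton as the paper's proof: the shifted temperature $\vartheta=\theta-\frac{1}{\overline{\rho_0}|\Omega|}E_0$ together with the conservation of $\int\rho(\theta+\frac12|u|^2)dx$, the weighted Poincar\'e inequality with the constant $\sqrt{\tilde\rho}\,\frac{d}{\pi}(1+\frac{\tilde\rho}{\overline{\rho_0}})$ that produces the first entry of $\sigma_2$, the cascade of three weighted energy estimates (the $L^2$ identity for $\vartheta$, then \eqref{23}, then \eqref{35}), and the final elliptic upgrade via \eqref{21} and \eqref{58}. The one place you genuinely diverge is the treatment of the zeroth-order weight terms $\sigma_2 e^{\sigma_2 t}\Vert\kappa(\theta)\nabla\theta\Vert_{L^2}^2$ and $\sigma_2 e^{\sigma_2 t}\Vert\sqrt{\rho}\theta_t\Vert_{L^2}^2$: you absorb them into the same-level dissipations by testing \eqref{69} against $\theta^{\beta+1}-\overline{\theta^{\beta+1}}$ and by building a combined functional with parameters $\lambda_1,\lambda_2$, whereas the paper simply observes that each such term, after time integration, is already controlled by the weighted dissipation integral established at the previous level (estimates \eqref{31} and \eqref{39}), so it enters as a bounded source rather than something to be absorbed. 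Your route works (the margin $(1+\tilde\rho/\overline{\rho_0})^{-2}\le\frac14$ does close the absorption) but costs an extra elliptic identity and a sharp-constant bookkeeping that the cascading structure makes unnecessary; it also renders the weighted Poincar\'e inequality for $\theta_t$ and the smallness of $\int\rho\theta_t\,dx$ superfluous, since $\Vert\sqrt{\rho}\theta_t\Vert_{L^2}^2\le\underline{\theta}^{-\beta}\Vert\sqrt{\rho}\theta^{\beta/2}\theta_t\Vert_{L^2}^2$ and \eqref{39} already do the job.
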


\begin{proof}

\underline{step 1}.
Direct calculations together with \eqref{1} lead to
\begin{align*}
  \int(\rho\theta+\frac{1}{2}\rho|u|^2)dx
  =\int(\rho_0\theta_0+\frac{1}{2}\rho_0|u_0|^2)dx= E_0.
\end{align*}
Multiplying $\eqref{1}_3$ by $\theta-\frac{1}{\overline{\rho_0}|\Omega|}E_0$ and integrating the result over $\Omega$, we infer that
\begin{align*}
  &\frac{1}{2}\frac{d}{dt}\int\rho(\theta-\frac{1}{\overline{\rho_0}|\Omega|}E_0)^2dx
  +\Vert\theta^{\frac{\beta}{2}}\nabla\theta\Vert_{L^2}^2\\
  &=\int(\theta-\frac{1}{\overline{\rho_0}|\Omega|}E_0)\theta^{\alpha}|D(u)|^2dx
  \leq C_0\Vert\nabla u\Vert_{L^2}^2.
\end{align*}
Multiplying the last inequality by $e^{\sigma_2 t}$, one obtains
\begin{align}\label{29}
  &\frac{d}{dt}\big(e^{\sigma_2 t}\Vert\sqrt{\rho}
   (\theta-\frac{1}{\overline{\rho_0}|\Omega|}E_0)\Vert_{L^2}^2\big)
   +2e^{\sigma_2 t}\Vert\theta^{\frac{\beta}{2}}\nabla\theta\Vert_{L^2}^2\nonumber\\
  &\leq \sigma_2e^{\sigma_2 t}\Vert\sqrt{\rho}(\theta-\frac{1}{\overline{\rho_0}|\Omega|}E_0)\Vert_{L^2}^2
   +C_0e^{\sigma_2 t}\Vert\nabla u\Vert_{L^2}^2\nonumber\\
  &\leq 2\sigma_2\tilde{\rho}
   \big(\frac{d}{\pi}+\frac{\tilde{\rho}}{\overline{\rho_0}}\frac{d}{\pi}\big)^2
   \underline{\theta}^{-\beta}e^{\sigma_2 t}\Vert\theta^{\frac{\beta}{2}}\nabla\theta\Vert_{L^2}^2
   +C_0e^{\sigma_2 t}\Vert\nabla u\Vert_{L^2}^2,
\end{align}
owing to Poincar\'{e}'s inequality and
\begin{align*}
  &\Vert\sqrt{\rho}(\theta-\frac{1}{\overline{\rho_0}|\Omega|}E_0)\Vert_{L^2}\\
  &\leq \sqrt{\tilde{\rho}}
   \big(\Vert\theta-\overline{\theta}\Vert_{L^2}
   +\Vert\overline{\theta}-\frac{1}{\overline{\rho_0}|\Omega|}\int\rho\theta dx\Vert_{L^2}
   +\Vert\frac{1}{\overline{\rho_0}|\Omega|}\int\rho\theta dx
   -\frac{1}{\overline{\rho_0}|\Omega|}E_0\Vert_{L^2}\big)\\
  &\leq \sqrt{\tilde{\rho}}
   \big(\frac{d}{\pi}\Vert\nabla\theta\Vert_{L^2}
   +\frac{1}{\overline{\rho_0}|\Omega|}\Vert\int\rho(\theta-\overline{\theta}) dx\Vert_{L^2}
   +\frac{1}{2\overline{\rho_0}|\Omega|}\Vert\int\rho|u|^2dx\Vert_{L^2}\big)\\
  &\leq \sqrt{\tilde{\rho}}
   \big(\frac{d}{\pi}\Vert\nabla\theta\Vert_{L^2}
   +\frac{\tilde{\rho}}{\overline{\rho_0}}\frac{d}{\pi}\Vert\nabla\theta\Vert_{L^2}
   +\frac{\tilde{\rho}}{2\overline{\rho_0}|\Omega|^{\frac{1}{2}}}\Vert u\Vert_{L^2}^2\big)\\
  &\leq \sqrt{\tilde{\rho}}
   \big(\frac{d}{\pi}+\frac{\tilde{\rho}}{\overline{\rho_0}}\frac{d}{\pi}\big)
   \underline{\theta}^{-\frac{\beta}{2}}\Vert\theta^{\frac{\beta}{2}}\nabla\theta\Vert_{L^2}
   +C_0\Vert\nabla u\Vert_{L^2}^2,
\end{align*}
where $\displaystyle \overline{\theta}=\frac{1}{|\Omega|}\int\theta dx$. Besides, choosing $\sigma_2$ such that
\begin{align}\label{30}
  \sigma_2
  =\min\big\{ \, \frac{1}{2}\tilde{\rho}^{-1}\underline{\theta}^{\beta}
   \big(\frac{d}{\pi}+\frac{\tilde{\rho}}{\overline{\rho_0}}\frac{d}{\pi}\big)^{-2}, ~
   \sigma_1\big\},
\end{align}
recalling \eqref{29} and Lemma \ref{L6}, one has
\begin{align}\label{31}
  e^{\sigma_2 t}\Vert\sqrt{\rho}(\theta-\frac{1}{\overline{\rho_0}|\Omega|}E_0)\Vert_{L^2}^2
   +\int_0^t e^{\sigma_2 s}\Vert\theta^{\frac{\beta}{2}}\nabla\theta\Vert_{L^2}^2ds
  \leq C_0.
\end{align}

\underline{step 2}.
Multiplying \eqref{23} by $e^{\sigma_2 t}$, it holds that
\begin{align*}
  &\frac{d}{dt}\big(e^{\sigma_2 t}\Vert\kappa(\theta)\nabla\theta\Vert_{L^2}^2\big)
   +e^{\sigma_2 t}\int\rho\kappa(\theta)|\theta_t|^2dx\\
  &\leq \sigma_2e^{\sigma_2 t}\Vert\kappa(\theta)\nabla\theta\Vert_{L^2}^2
   +\frac{4}{\alpha+\beta+1}\frac{d}{dt}
   \big(e^{\sigma_2 t}\int\theta^{\alpha+\beta+1}|D(u)|^2dx\big)\\
  &\quad+Ce^{\sigma_2 t}\Vert\nabla u\Vert_{H^1}^2
   +Ce^{\sigma_2 t}\Vert\nabla u_t\Vert_{L^2}^2.
\end{align*}
By virtue of Lemma \ref{L6}, \eqref{30} and \eqref{31}, one gets
\begin{align}\label{39}
  e^{\sigma_2 t}\Vert\theta^{\beta}\nabla\theta\Vert_{L^2}^2
   +\int_0^t e^{\sigma_2 s}\Vert\sqrt{\rho}\theta^{\frac{\beta}{2}}\theta_t\Vert_{L^2}^2ds
  \leq C_0.
\end{align}

\underline{step 3}.
Similarly, we have from \eqref{35} that
\begin{align}\label{38}
  &\frac{d}{dt}\big(e^{\sigma_2 t}\Vert\sqrt{\rho}\theta_t\Vert_{L^2}^2\big)
  +e^{\sigma_2 t}\Vert\theta^{\frac{\beta}{2}}\nabla\theta_t\Vert_{L^2}^2\nonumber\\
  &\leq \sigma_2 e^{\sigma_2 t}\Vert\sqrt{\rho}\theta_t\Vert_{L^2}^2
   +C\mathcal{A}(t)e^{\sigma_2 t}\Vert\sqrt{\rho}\theta_t\Vert_{L^2}^2
   +C_0e^{\sigma_2 t}\Vert(\nabla u, u_t)\Vert_{H^1}^2,
\end{align}
where $\mathcal{A}(t)$ is defined in \eqref{37}. Then, combining \eqref{38} with Gr\"{o}nwall's inequality, \eqref{39}, Proposition \ref{P1} and Lemma \ref{L6} gives
\begin{align}\label{41}
  e^{\sigma_2 t}\Vert\sqrt{\rho}\theta_t\Vert_{L^2}^2
   +\int_0^t e^{\sigma_2 s}\Vert\theta^{\frac{\beta}{2}}\nabla\theta_t\Vert_{L^2}^2ds
  \leq C_0.
\end{align}
We thus derive \eqref{40} from \eqref{31}-\eqref{41}, \eqref{21} and \eqref{58}. Consequently, the proof of this lemma is completed.
\end{proof}


\section{Global Well-Posedness}\label{S5}

In this section, we start with the local existence of a strong solution which has been established in \cite{Cho,Guo}.
\begin{lemma}[Local Strong Solution]\label{L5.1}
Suppose that  $(\rho_0, u_0, \theta_0)$ satisfies \eqref{49}-\eqref{79}. Then there exist a
small time $T_0>0$ and a unique strong solution $(\rho, u, \theta, P)$  to the problem \eqref{1}-\eqref{4}
in $\Omega\times[0,T_0]$ satisfying \eqref{45}.
\end{lemma}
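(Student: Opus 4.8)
The plan is to follow the two-level approximation scheme for non-homogeneous fluids with vacuum developed by Cho--Kim \cite{Cho} and Guo--Li \cite{Guo}, adapted to the temperature-dependent coefficients \eqref{4}. First I would regularize the density, replacing $\rho_0$ by $\rho_0^\delta=J_\delta*\rho_0+\delta$ with $\delta\in(0,1)$ (mollifying $u_0,\theta_0$ if needed), so that $\rho_0^\delta\geq\delta>0$, $\rho_0^\delta\to\rho_0$ in $W^{1,q}$, the bound $\underline{\theta}\leq\theta_0$ is preserved, and the compatibility conditions \eqref{62} hold for the regularized data with $(P_0^\delta,g_1^\delta,g_2^\delta)$ bounded in $H^1\times L^2\times L^2$ uniformly in $\delta$; this uniformity is precisely the role of \eqref{62}.

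For each fixed $\delta$ I would then construct a solution by a linearized iteration. Setting $(\rho^0,u^0,\theta^0)=(\rho_0^\delta,u_0,\theta_0)$, given $(\rho^k,u^k,\theta^k)$ one (i) solves the linear transport equation $\partial_t\rho^{k+1}+\dive(\rho^{k+1}u^k)=0$ with $\rho^{k+1}(0)=\rho_0^\delta$, which, since $\dive u^k=0$, keeps $\delta\leq\rho^{k+1}\leq\|\rho_0^\delta\|_{L^\infty}$ and propagates the $W^{1,q}$ bound; (ii) solves the linear Stokes system $\rho^{k+1}\partial_t u^{k+1}+\rho^{k+1}u^k\cdot\nabla u^{k+1}-\dive(2\mu(\theta^k)D(u^{k+1}))+\nabla P^{k+1}=0$, $\dive u^{k+1}=0$, $u^{k+1}|_{\partial\Omega}=0$, $u^{k+1}(0)=u_0$; and (iii) solves the linear parabolic problem $\rho^{k+1}(\partial_t\theta^{k+1}+u^{k+1}\cdot\nabla\theta^{k+1})-\dive(\kappa(\theta^k)\nabla\theta^{k+1})=2\mu(\theta^k)|D(u^{k+1})|^2$ with $\nabla\theta^{k+1}\cdot\mathbf{n}=0$ on $\partial\Omega$ and $\theta^{k+1}(0)=\theta_0$. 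Because the source in (iii) is nonnegative and $\theta_0\geq\underline{\theta}$, the maximum principle gives $\theta^{k+1}\geq\underline{\theta}$, so $\mu(\theta^k)=(\theta^k)^\alpha$ and $\kappa(\theta^k)=(\theta^k)^\beta$ stay nondegenerate with $\nabla\mu(\theta^k)=\alpha(\theta^k)^{\alpha-1}\nabla\theta^k$ under control; one then runs the $H^2$/$H^3$ elliptic estimates (Lemma \ref{L03} for the Stokes part, and the analogous regularity for the Neumann heat operator) together with the $\partial_t$-differentiated energy estimates, using the regularized compatibility relations to bound $\|\sqrt{\rho_0^\delta}\,\partial_t u^{k+1}(0)\|_{L^2}$ and $\|\sqrt{\rho_0^\delta}\,\partial_t\theta^{k+1}(0)\|_{L^2}$, so as to close a priori bounds on an interval $[0,T_0]$ with $T_0$ depending only on the initial norms, not on $k$ or $\delta$. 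A contraction estimate for the differences $(\rho^{k+1}-\rho^k,u^{k+1}-u^k,\theta^{k+1}-\theta^k)$ in the lower-order norm $L^\infty(0,T_0;L^2)\cap L^2(0,T_0;H^1)$ (and $L^\infty(0,T_0;L^q)$ for the density), after possibly shrinking $T_0$, then yields convergence to a strong solution $(\rho^\delta,u^\delta,\theta^\delta,P^\delta)$ of the $\delta$-regularized system satisfying \eqref{45} on $[0,T_0]$, the time-continuity being recovered by an Aubin--Lions argument.

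Passing $\delta\to0$ is possible because the a priori bounds above are uniform in $\delta$ (thanks to the uniform control of \eqref{62}): a subsequence converges in the appropriate weak-$*$ and, by compactness, strong topologies to a limit $(\rho,u,\theta,P)$ that solves \eqref{1}--\eqref{4} a.e. in $\Omega\times[0,T_0]$ and inherits the regularity \eqref{45}, with $\theta\geq\underline{\theta}$ and $0\leq\rho\leq\tilde{\rho}$. Uniqueness on $[0,T_0]$ then follows from a standard energy argument: for two strong solutions with the same data one estimates the difference of densities in $L^2$ and the differences of velocities and temperatures in the $\sqrt{\rho}$-weighted $L^2$ norm, using Lemma \ref{L08} to absorb the vacuum and the regularity \eqref{45} of one of the solutions, and concludes by Gr\"{o}nwall's inequality.

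The main obstacle throughout is the interaction of vacuum with the temperature-dependent coefficients: the need for initial-time bounds on $\sqrt{\rho}u_t$ and $\sqrt{\rho}\theta_t$ forces the compatibility conditions \eqref{62}, and closing the $H^2$ bounds for $u$ and $\theta$ — the Stokes estimate of Lemma \ref{L03} carries a constant involving $\|\nabla\mu(\theta)\|_{L^k}=\alpha\|\theta^{\alpha-1}\nabla\theta\|_{L^k}$ — requires both the pointwise lower bound $\theta\geq\underline{\theta}$ (to make the coefficients nondegenerate on the short interval and control their derivatives) and the degenerate-weight Poincar\'{e} inequality of Lemma \ref{L08} to handle the set where $\rho$ vanishes. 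Note that, in contrast to the global theorem, no smallness of $\alpha$ or of $|V|$ is needed at this local stage.
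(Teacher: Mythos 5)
The paper does not prove this lemma at all: it simply invokes the local well-posedness theory of Cho--Kim and Guo--Li by citing \cite{Cho,Guo}. Your sketch (density regularization to remove vacuum, linearized iteration with transport/Stokes/parabolic steps, uniform-in-$\delta$ estimates via the compatibility conditions \eqref{62}, contraction in a lower-order norm, and passage to the limit) is a faithful outline of exactly the construction in those references, so it is consistent with the paper's (delegated) argument and contains no gap at the level of detail expected here.
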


With all the a priori estimates in section \ref{S4} and Lemma \ref{L5.1} at hand, we are now in a position to prove Theorem \ref{T1}.

\vspace{0.4cm}

\noindent
\textbf{Proof of Theorem \ref{T1}:}

It follows from \eqref{49}, \eqref{28} and Lemma \ref{L5.1} that there exists a $T_1\in(0,T_0]$ such that \eqref{60} holds for $T=T_1$. Set
\begin{align}\label{46}
  T^{*}\triangleq\big\{T ~\big|~ (\rho,u,\theta,P) ~\text{is a strong solution on} ~ \Omega\times(0,T] ~\text{and} ~ \eqref{60} ~\text{holds}\big\}.
\end{align}
Then, $T^*\geq T_1>0$.

For any $T\in(T_1,T^*]$ with $T$ finite, in view of Proposition \ref{P1}, we know that
\begin{align*}
  \rho\in L^{\infty}(0,T; W^{1,q}), \ \rho_t\in L^{\infty}(0,T; L^q),
\end{align*}
which combining with the work by P.Lions\cite[Lemma 2.3, p.43]{Lions} yields
\begin{align}\label{75}
  \rho\in C([0,T]; W^{1,q}).
\end{align}
Moreover, since
\begin{align*}
  u, \theta\in L^2(0,T; H^3), ~~~~~~
  u_t, \theta_t\in L^2(0,T; H^1)
\end{align*}
and
\begin{align*}
  \Vert u\Vert_{C([0,T]; H^2)}
  \leq C(T)\big(\Vert u\Vert_{L^2(0,T; H^3)}+\Vert u_t\Vert_{L^2(0,T; H^1)}\big)
\end{align*}
(see \cite[Theorem 4, p.304]{Evans}),  one has
\begin{align}\label{76}
  u, \theta\in C([0,T]; H^2).
\end{align}
According to the equations $\eqref{1}_2, \eqref{1}_3$ and \eqref{75}-\eqref{76} (see\cite[p.21]{Zhong2022}), we can see that
\begin{align}\label{77}
  \rho u_t+\rho u\cdot\nabla u, \ \rho \theta_t+\rho \theta\cdot\nabla \theta
  \in C([0,T]; L^2),
\end{align}
we get after using Stokes estimates (Lemma \ref{L03}) that
\begin{align}\label{78}
   P\in C(0,T;H^1).
\end{align}

Now, we claim that
\begin{align}\label{56}
 T^*=\infty.
\end{align}
Otherwise, $T^*<\infty$. Then by Lemmas \ref{L1}-\ref{L5}, the estimate \eqref{65} holds for $T=T^*$. It follows from \eqref{75}-\eqref{78} that
\begin{align*}
  (\rho^*,u^*,\theta^*,P^*)(x)\triangleq(\rho,u,\theta,P)(x,T^*)
  =\lim_{t\rightarrow T^*}(\rho,u,\theta,P)(x,t)
\end{align*}
satisfies
\begin{gather*}
 0\leq\rho^*\in W^{1,q}, ~~~~
 u^*\in H_{0,\sigma}^1\cap H^2, ~~~~
 \underline{\theta}\leq\theta^*\in H_{\mathbf{n}}^2,\\
 \rho^* u^*_t+\rho^* u^*\cdot\nabla u^*\in L^2,   ~~~~
 \rho^* \theta^*_t+\rho^* \theta^*\cdot\nabla \theta^*\in L^2, ~~~~
 P^*\in H^1.
\end{gather*}
And let
\begin{align*}
 & \mathrm{g}_1\triangleq\left\{
        \begin{array}{ll}
          {\rho^*}^{-\frac{1}{2}}(\rho^* u^*_t+\rho^* u^*\cdot\nabla u^*),
           & \text{if} ~ x\in\{x|\rho^*(x)>0\}  \\
          0, & \text{if} ~ x\in\{x|\rho^*(x)=0\}
        \end{array}
      \right. ,\\
 & \mathrm{g}_2\triangleq\left\{
        \begin{array}{ll}
          {\rho^*}^{-\frac{1}{2}}(\rho^*\theta^*_t+\rho^* u^*\cdot\nabla \theta^*),
           & \text{if} ~ x\in\{x|\rho^*(x)>0\}  \\
          0, & \text{if} ~ x\in\{x|\rho^*(x)=0\}
        \end{array}
      \right. .
\end{align*}
Then, from Proposition \ref{P1}, we obtain that $\mathrm{g}_1, \mathrm{g}_2\in L^2$ and satisfy the compatibility conditions
\begin{align*}
  \left\{
    \begin{array}{rr}
     -\dive(2\mu(\theta^*)D(u^*))+\nabla P^*=\sqrt{\rho^*}\mathrm{g}_1, \\
     -\dive(\kappa(\theta^*)\nabla\theta^*)-2\mu(\theta^*)|D(u^*)|^2=\sqrt{\rho^*}\mathrm{g}_2.
    \end{array}
  \right.
\end{align*}
Hence, we can take $(\rho^*,u^*,\theta^*)$ as the initial data and apply Lemma \ref{L5.1} to extend the strong solution beyond $T^*$. This contradicts the definition of $T^*$ in \eqref{46}. Therefore, $T^*=\infty$.

Finally, to finish the proof of Theorem \ref{T1}, it remains to prove \eqref{47}-\eqref{55}. In fact, the decay estimates on $u$ and $\theta$ have been established in Lemmas \ref{L6} and \ref{L7}, respectively. Collecting \eqref{42}, \eqref{40} and \eqref{56}, we complete the proof of Theorem \ref{T1}.
\hfill $\qedsymbol$

\begin{remark}\label{r3}
It is well known that  $\rho$ is a constant along the streamline (flow map). Thus, $\rho$ cannot converge to any constant as $t\rightarrow\infty$.
Indeed, $\forall \, x\in\Omega, t\in[0,\infty]$, we define $X(x,s;t)$ as follows
\begin{align}
\left\{
  \begin{array}{l}
    \displaystyle
    \frac{d}{ds}X(x,s;t)=u(X(x,s;t),s), ~~~\forall \, s\in[0,\infty],\\[2mm]
    X(x,t;t)=x.
  \end{array}
\right.
\end{align}
Using the fact that $u\in L^1(0,\infty; W^{1.\infty})$ and Cauchy-Lipschitz theorem (Lemma Appendix A.1. in \cite{Choi}), we know that $X(x,s;t)$ is well-defined. Besides, we obtain from \eqref{1} that
\begin{align*}
  \frac{d}{ds}\rho(X(x,s;t),s)
  =0,
\end{align*}
which  means for any $(x,s,t)\in\Omega\times[0,\infty]\times[0,\infty]$,
\begin{align*}
  \rho(X(x,s;t),s)=\rho(X(x,t;t),t)=\rho(x,t).
\end{align*}
Taking $s=0$, we get
\begin{align*}
  \rho(x,t)=\rho_0(X(x,0;t)), ~~~
   \forall~ (x,t)\in\Omega\times[0,\infty].
\end{align*}
We see that
\begin{align}
 \rho(x,\infty)
 \triangleq\lim_{t\rightarrow\infty}\rho(x,t)=\rho_0(X(x,0;\infty)).
\end{align}
Therefore, the state of $\rho$ at infinity time is determined by $X(x,0;\infty)$, which also depends on the velocity $u$. Consequently, in general it cannot converge to a constant unless $\rho_0\equiv const.$.
\end{remark}

\section*{Acknowledgement}

The work is  supported   by the Fundamental Research Funds for the Central Universities, CHD (No.300102122115).

\section*{Conflict of interest statement}
The authors declare that there is no conflict of interests regarding the publication of this
article.

\end{document}